\documentclass{article} 
\usepackage{amsmath,amsthm}
\usepackage{amsfonts}
\usepackage{mathtools}
\usepackage{ amssymb }
\usepackage{ stmaryrd }
\usepackage{tikz-cd}
\usepackage[shortlabels]{enumitem}
\usepackage{ upgreek }
\usepackage{yfonts}
\usepackage{dsfont}
\usepackage{url}
\usepackage{graphicx}
\usepackage{afterpage}
\usepackage{ amssymb }
\usepackage{yfonts}
\usepackage[colorinlistoftodos]{todonotes}
\usepackage{enumitem}
\setlist[enumerate]{label=(\arabic*)}
\setcounter{MaxMatrixCols}{20}

\usepackage[left=3.5cm,right=3cm,twoside]{geometry}
%per la stampa

%bibliografia

\usepackage{hyperref}

\setcounter{section}{-1}

\def\Q{\mathbb{Q}}
\def\N{\mathbb{N}}
\def\Z{\mathbb{Z}}
\def\R{\mathbb{R}}
\def\U{\mathcal{U}}
\def\V{\mathcal{V}}

\def\sub{\subseteq}
\def\mid{\ \vert \ }

\newcommand{\s}[1]{\nsext{#1}}
\makeatletter
\DeclareRobustCommand{\nsext}[1]{%
  \binrel@{#1}% compute the type
  \binrel@@{%
    {\vphantom{#1}}^*% the asterisk at the proper height
    \kern-\scriptspace % remove the script space
    \csname mkern@\detokenize{#1}\endcsname % additional kerning
    {#1}% the symbol
  }%
}
\newcommand{\defineextkern}[2]{%
  \@namedef{mkern@\detokenize{#1}}{\mkern#2}%
}
\makeatother

\makeatletter
\newcommand{\ostar}{\mathbin{\mathpalette\make@circled\star}}
\newcommand{\make@circled}[2]{%
  \ooalign{$\m@th#1\smallbigcirc{#1}$\cr\hidewidth$\m@th#1#2$\hidewidth\cr}%
}
\newcommand{\smallbigcirc}[1]{%
  \vcenter{\hbox{\scalebox{0.77778}{$\m@th#1\bigcirc$}}}%
}
\makeatother

\theoremstyle{definition}
\newtheorem{theorem}{Theorem}[section]

\newtheorem{coroll}[theorem]{Corollary}

\newtheorem{lemma}[theorem]{Lemma}

\newtheorem{prop}[theorem]{Proposition}

\newtheorem{defn}[theorem]{Definition}
\newtheorem*{defn*}{Definition}

\newtheorem{rmrk}[theorem]{Remark}

\newtheorem{example}[theorem]{Example}

\newtheorem{question}[theorem]{Question}

\newtheorem*{theorem*}{Theorem}

\usepackage{titling}
\usepackage[affil-it]{authblk}
\usepackage{orcidlink}
\usepackage{todonotes}
\setcounter{section}{0}

\newcommand{\email}[1]{\href{mailto:#1}{\texttt{#1}}}

\makeatletter
\newcommand{\subjclass}[2][2020]{%
  \let\@oldtitle\@title%
  \gdef\@title{\@oldtitle\footnotetext{\hspace*{-2em}#1 \emph{Mathematics subject classification.} #2}}%
}
\newcommand{\keywords}[1]{%
  \let\@@oldtitle\@title%
  \gdef\@title{\@@oldtitle\footnotetext{\hspace*{-2em}\emph{Keywords:} #1.}}%
}

\makeatother

\author[1]{Lorenzo Luperi Baglini\orcidlink {0000-0002-0559-0770}%
  \thanks{email: \email{lorenzo.luperi@unimi.it}}}

\author[2]{Alessandro Vegnuti%
  \thanks{email: \email{alessandro.vegnuti@unimi.it}}}

\affil[1]{\small Dipartimento di Matematica, Università  di Milano, Via Saldini 50, 20133 Milano, Italy}

\affil[2]{\small Dipartimento di Matematica, Università  di Milano, Via Saldini 50, 20133 Milano, Italy and Mathematisches Institut, Albert-Ludwigs-Universität Freiburg, D-79104 Freiburg,
Germany}

\title{Asymptotic Ramsey theory of Diophantine equations}
%\date{}

\subjclass{Primary: 05D10. Secondary: 26E35, 03H15, 11U10, 05A17, 54D80}

\keywords{Ramsey theory of Diophantine equations, asymptotic partition regularity, nonstandard analysis, ultrafilters}

\begin{document}

\maketitle

\begin{abstract}
    We introduce the notion of asymptotic partition regularity for Diophantine equations. We show how this notion is at the core of almost all known negative results in the Ramsey theory of equations, and we use it to produce new ones, as in the case of Fermat-Catalan equations. The methods we use here are based on translating asymptotic partition regularity into the context of nonstandard extensions, via the notion of Archimedean equivalence classes of hypernaturals. 
\end{abstract}

\section*{Introduction}

With Arithmetic Ramsey Theory it is usually indicated a family of theorems asserting that certain arithmetic configurations are Partition Regular (PR from now on\footnote{PR will mean ``partition regular'' or ``partition regularity'', depending on the context.}), where a configuration is PR when given any finite partition of the positive integers $\N=\{1,2,\dots\}$, one can always find that configuration contained in a single piece of the partition. Usually, a partition is also called a \textit{coloring}, every piece \textit{color} and a pattern contained in a single color is also called \textit{monochromatic}.

Historically, the first result in the area is Schur's Theorem (\cite{schur1917kongruenz}), which states that in every finite coloring of $\N$ there are always $a,b$ such that $\{a,b,a+b\}$ is monochromatic. Later on, in 1927 Van der Waerden proved that, for every fixed $l\in \N$, you can always find $a,b$ such that $\{a, a+b,\dots, a + l b\}$ is monochromatic (\cite{van1927beweis}); and in 1933 Rado (\cite{Rado}) fully characterized which linear equations are PR. Results on the nonlinear case have been more scarce; except for a few fundamental works like \cite{bergelson1996polynomial} and \cite{sarkozy1978difference}, this line of research has been mostly active in the past fifteen years. In particular, several new results have been obtained using methods coming from logic, namely studying the problem from the point of view of nonstandard extensions, see e.g. \cite{baglini2013partition}, \cite{IteratedHyper} and \cite{di2018ramsey}.\\

In this paper, we introduce a new notion of partition regularity, called \textit{asymptotic PR}, which takes into account the mutual largeness of elements of monochromatic solutions.

\begin{defn}\label{defn:asymPR} Let\footnote{In the following, we will use the symbol $\uplus$ to denote a disjoint union.} $I_{1}\uplus\dots \uplus I_{s}=\{1,\dots,n\}$.  We say that $P\left(x_1,\dots, x_n\right)=0$ is asymptotically PR in $I_{1},\dots,I_{s}$ if for every finite partition of $\N$, for every $N\in\N$ there are monochromatic solutions $a_1,\dots, a_n$ such that:
    \begin{enumerate}
        \item for all $r\leq s$, for all $i,j\in I_{r}$ $\vert\frac{a_i}{a_j}-1\vert<\frac{1}{N}$;
        \item for all $r,t\in\{1,\dots,s\}$, for all $i\in I_{r}, j\in I_{t}$, if $r<t$ then $N a_{j}< a_{i}$.
    \end{enumerate}
    The sets $I_{i}$ will be called asymptotic classes.
    \end{defn}

Basically, $P(x_1,\dots, x_n)=0$ is asymptotically PR in $I_{1},\dots,I_{s}$ when monochromatic values in $I_{i}$ can be chosen relatively close to each other, whilst values belonging to different $I_{j}$ can be chosen arbitrarily distant from each other.

Our first simple, but relevant, result is that partition regularity and asymptotic PR are highly related.

\begin{theorem*}[A] Let $P\left(x_1,\dots, x_n\right)\in\Z[x_{1},\dots,x_{n}]$. The following facts are equivalent:
\begin{enumerate}
    \item $P\left(x_{1},\dots,x_{n}\right)=0$ is PR;
    \item $P\left(x_{1},\dots,x_{n}\right)=0$ is asymptotically PR in $I_{1},\dots,I_{s}$ for some $s\leq n, I_{1},\dots,I_{s}\subseteq\{1,\dots,n\}$.    
\end{enumerate}
\end{theorem*}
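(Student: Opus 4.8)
The plan is to prove the two implications separately, with the direction $(2)\Rightarrow(1)$ being essentially immediate and $(1)\Rightarrow(2)$ requiring the real work.

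\medskip

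\noindent\textbf{The easy direction $(2)\Rightarrow(1)$.} Suppose $P=0$ is asymptotically PR in some $I_1,\dots,I_s$. Fix a finite coloring of $\N$. Applying the definition with, say, $N=1$ already produces a monochromatic solution $a_1,\dots,a_n$ (the extra metric conditions (1) and (2) are constraints we simply discard). Hence $P=0$ is PR. Note that here we are exploiting that ``asymptotically PR in $I_1,\dots,I_s$'' is formally \emph{stronger} than ``PR'': it asks for monochromatic solutions satisfying additional separation properties, so in particular monochromatic solutions exist.

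\medskip

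\noindent\textbf{The hard direction $(1)\Rightarrow(2)$.} This is where the content lies. Suppose $P=0$ is PR; we must find a \emph{single} choice of partition $\{1,\dots,n\}=I_1\uplus\dots\uplus I_s$ that works for \emph{all} colorings and all $N$ simultaneously. The natural strategy is a compactness / diagonalization argument. For each $N$, pick a ``bad'' coloring $c_N$ (if asymptotic PR failed for every partition, we could try to diagonalize against all of them at once) — but the quantifier structure is delicate because the partition $I_1,\dots,I_s$ is chosen \emph{before} the coloring and $N$. The cleaner approach, and the one the paper's methods point toward, is to pass to a nonstandard extension: take a monochromatic solution to $P=0$ inside $\s{\N}$ for the canonical coloring-by-ultrafilter, i.e. realize the coloring in $\s{\N}$ and obtain hypernatural values $\alpha_1,\dots,\alpha_n$ with $P(\alpha_1,\dots,\alpha_n)=0$ lying in one color. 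Then define $I_r$ to be the Archimedean equivalence classes of the $\alpha_i$: declare $i\sim j$ when $\alpha_i/\alpha_j$ is finite and noninfinitesimally away from $0$ (equivalently $\alpha_i$ and $\alpha_j$ are Archimedean-equivalent), and order the classes by the order of magnitude of their elements, so that $i\in I_r$, $j\in I_t$, $r<t$ means $\alpha_i/\alpha_j$ is infinite. These classes are exactly conditions (1)–(2) read off in $\s{\N}$: within a class the ratios are infinitely close to $1$, across classes one value is infinitely larger. One then transfers back: for each standard $N$, the internal/standard statement ``there is a monochromatic solution with $|a_i/a_j-1|<1/N$ within each $I_r$ and $Na_j<a_i$ across classes in the right order'' holds in $\s{\N}$ (witnessed by the $\alpha_i$), hence by transfer it holds in $\N$. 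Since the coloring was arbitrary (one just needs the nonstandard solution to exist for the given coloring, which is where PR of $P=0$ is used, together with the fact that monochromatic pieces are captured by points of $\s\N$), we get asymptotic PR in $I_1,\dots,I_s$.

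\medskip

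\noindent\textbf{Main obstacle.} The subtlety is uniformity: the partition $I_1,\dots,I_s$ must not depend on the coloring. A priori, different colorings might force different Archimedean patterns among the $\alpha_i$, and there are only finitely many partitions of $\{1,\dots,n\}$, so one wants to fix the partition first and then handle all colorings. I would resolve this either by a pigeonhole over the finitely many possible partitions combined with a compactness argument (if for \emph{every} partition there is a coloring and an $N$ with no good monochromatic solution, take a common refinement of these finitely many colorings and derive a contradiction with PR), or — more in the spirit of the paper — by working with a fixed ultrafilter $\U$ that witnesses PR of $P=0$ in a canonical way (so that the generating solution, and hence its Archimedean class pattern, is intrinsic to $P$ rather than to an ad hoc coloring), and then verifying that this pattern validates Definition \ref{defn:asymPR} against every coloring. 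Pinning down this uniformity cleanly, and checking that the Archimedean-class data transfers to the finite statements (1)–(2) for each standard $N$, is the crux of the proof; the rest is bookkeeping.
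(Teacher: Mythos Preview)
Your proposal is essentially correct and in fact anticipates \emph{both} proofs the paper gives: the ``pigeonhole over the finitely many partitions of $\{1,\dots,n\}$ plus common refinement'' argument is precisely the paper's first (standard) proof of Theorem~\ref{thmasympclas}, and the nonstandard route via Archimedean classes is the paper's second proof, obtained as an immediate corollary of Theorem~C.

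Two clarifications are worth making. First, your ``Main obstacle'' about uniformity dissolves once you state the nonstandard characterisation precisely: the paper's Theorem~\ref{2 Teo fondamentare PR sse esistono soluzioni equiv} says PR is equivalent to the existence of a solution $\alpha_1\sim\dots\sim\alpha_n$ in $\s\N$ with all $\alpha_i$ \emph{$u$-equivalent} (i.e.\ generating the same ultrafilter). This single tuple then works against \emph{every} coloring simultaneously, because for any partition $\N=C_1\uplus\dots\uplus C_r$ some $C_k$ lies in the common ultrafilter, and then all $\alpha_i\in\s C_k$. So the partition $I_1,\dots,I_s$ read off from the Archimedean classes of the $\alpha_i$ is automatically coloring-independent; there is no need for a further diagonalisation. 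Your phrasing ``monochromatic solution for the canonical coloring-by-ultrafilter'' gestures at this but doesn't quite nail it.

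Second, you assert that ``within a class the ratios are infinitely close to $1$'', but Archimedean equivalence alone only gives that $\alpha_i/\alpha_j$ is finite and non-infinitesimal. To get $st(\alpha_i/\alpha_j)=1$, and hence $|\alpha_i/\alpha_j-1|<\frac{1}{N}$ for all standard $N$, one needs the additional input that $\alpha_i\sim\alpha_j$; the implication $(\alpha\sim\beta\text{ and }\alpha\asymp\beta)\Rightarrow st(\alpha/\beta)=1$ is a separate lemma (Theorem~\ref{2 stessa classe e sim allora st=1} in the paper, quoted from~\cite{di2025ramsey}). This is the one genuinely non-trivial ingredient you pass over, and it is exactly what makes the transfer step for condition~(1) of Definition~\ref{defn:asymPR} go through.
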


Asymptotic PR seems to be particularly well-suited to find necessary conditions for the PR of equations, especially in the nonlinear case. In fact, we will use it to provide almost immediate proofs of (at the best of our knowledge) all main known necessary conditions for the PR of Diophantine equations, in particular of

\begin{itemize}
    \item Rado's Theorem for linear equations (\cite{Rado}, Theorem 4), here Theorem \ref{2 Rado};
    \item Di Nasso-Luperi Baglini's homogeneity condition (\cite{di2018ramsey}, Theorem 3.10), here Theorem \ref{2 Pezzo omogeneo alla Rado};
    \item Barret-Lupini-Moreira's maximal Rado condition (\cite{BarrettLupiniMoreira}, Theorem 3.1), here Proposition \ref{1 Teorema sulle teste}.
\end{itemize}

We use asymptotic PR also to produce some new results, the main being a very strict necessary condition for the PR of Fermat-Catalan equations (Theorem \ref{4 th x^n - y^n}). 

\begin{theorem*}[B]Let $n\geq 2$, let $a,b\in\Z$ and let $P(z)\in \Z[z]$ be a polynomial of degree $k$. If $n\notin \{k,k-1\}$ then $ax^n+by^n = P(z)$ is not PR, if not trivially (i.e. with constant solutions).
\end{theorem*}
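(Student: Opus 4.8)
The plan is to prove the contrapositive: assuming $ax^n+by^n=P(z)=0$ is PR but has no constant solution, I would deduce $n\in\{k,k-1\}$. By Theorem~A the equation is asymptotically PR in some asymptotic classes $I_1,\dots,I_s$, which partition the index set $\{1,2,3\}$ of the variables $x,y,z$; there are only finitely many ordered such partitions, and up to the symmetry $x\leftrightarrow y$ (the monomials $ax^n$ and $by^n$ play interchangeable roles) only a handful are essentially different. Using the nonstandard reading of asymptotic PR that underlies the paper, I would fix, for each admissible choice of classes, a hypernatural solution $(\xi,\eta,\zeta)$ that is monochromatic, solves the equation, and has Archimedean classes mirroring the $I_r$: two variables lie in a common asymptotic class iff the corresponding hypernaturals are Archimedean-equivalent, in which case condition~(1) of Definition~\ref{defn:asymPR}, read at an infinite $N$, forces their ratio to be infinitely close to $1$; and the ordering of the $I_r$ is the reverse of the Archimedean ordering of the corresponding hypernaturals. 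The absence of constant solutions lets me take the solution non-constant and discard the degenerate shapes (such as $\xi=\eta$ with $\zeta$ a standard root of $P$).

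Working inside the (divisible, linearly ordered abelian) group of Archimedean classes — write $[\alpha]$ for the class of a positive hypernatural $\alpha$ — the equation $a\xi^n+b\eta^n-P(\zeta)=0$ displays three blocks, of classes $[\xi]^n$, $[\eta]^n$ and, once $\zeta$ is infinite (which I verify first, using that $P$ has a nonzero leading coefficient), $[\zeta]^k$. Their sum being $0$, the maximal one must be attained at least twice; this is the engine of the case analysis, since whenever the hypotheses on the $I_r$ force a single block to be strictly dominant there is nothing to cancel it and the configuration is impossible. The configurations that survive this first pruning are of two kinds: (i) a power of $x$ (or of $y$) ties $\zeta^k$ at the top, forcing the Archimedean identity $n[\xi]=k[\zeta]$; and (ii) $[\xi]^n=[\eta]^n$ is strictly the top, in which case the infinitesimal closeness of $\xi$ and $\eta$ forces $a+b=0$, so $b=-a$ and the equation is $a(x^n-y^n)=P(z)$.

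For the surviving cases I would use $\xi^n-\eta^n=(\xi-\eta)\sum_{i<n}\xi^i\eta^{n-1-i}$; the sum, being a sum of $n$ positive hypernaturals each of class $[\eta]^{n-1}$, has class exactly $[\eta]^{n-1}$, and the relevant residual balances collapse to monomial relations of the form $u\,v^{\,n-1}\asymp t^{\,k'}$ (with $u=\xi-\eta$, $v=\eta$, $t=\zeta$, and $k'$ one of the exponents occurring in $P$). Each such relation inherits a monochromatic realisation, so by the homogeneity obstruction (Theorem~\ref{2 Pezzo omogeneo alla Rado}) and the maximal-Rado/``heads'' obstruction (Proposition~\ref{1 Teorema sulle teste}) its exponent vector must satisfy a Rado-type condition, i.e.\ admit a nonempty zero-sum subset; intersecting the short list of admissible exponent vectors with the Archimedean inequalities already in force (most notably $[\xi-\eta]<[\eta]$, whence $[\zeta]^{k}<[\eta]^{n}$ and so $n\le k$, together with the identity $n[\xi]=k[\zeta]$ from case~(i)) leaves exactly $k\in\{n,n+1\}$, that is $n\in\{k,k-1\}$. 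The step I expect to be the genuine obstacle is precisely this endgame: the Archimedean data alone is too coarse (it permits $n\le k$ across the board), so one must repeatedly feed partition regularity back in — each reduced sub-equation is subject to the homogeneity and maximal-Rado conditions — and track carefully which Archimedean identities get forced at each level. Packaging this cleanly, together with separately disposing of the low-complexity corners ($k\le 1$; the cases $a=0$ or $b=0$, where the equation has at most two essential variables; and the finite or degenerate solutions), is where the real work lies; the rest of the case analysis is immediate from ``a lone dominant Archimedean block cannot cancel''.
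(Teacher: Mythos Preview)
Your outline arrives correctly at the key reduction: once $a+b=0$ is forced (and the paper gets there in one line from Theorem~\ref{3 Teo pezzo grande omogeneo e Rado}, bypassing your case analysis on ordered partitions of $\{1,2,3\}$ --- note also that your case~(i), $\xi^n\asymp\zeta^k$ with $\xi\sim\zeta$, is killed immediately by Corollary~\ref{3 corollario x^n asymp y^m allora n=m}, which already gives $n=k$), one writes $\beta=\alpha+\varepsilon$ with $\varepsilon\ll\alpha$ and obtains $\alpha^{n-1}\varepsilon\asymp\gamma^{k}$. But your proposed endgame has a genuine gap. You want to apply the homogeneity and maximal-Rado obstructions to the monomial relation $u\,v^{\,n-1}\asymp t^{k}$ with $u=\xi-\eta$, claiming that ``each such relation inherits a monochromatic realisation''. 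It does not: Theorem~\ref{2 Pezzo omogeneo alla Rado}, Proposition~\ref{1 Teorema sulle teste} and Proposition~\ref{2 remark sulla Rado condition degli esponenti} all require the variables involved to be mutually $\sim$-equivalent, and there is no reason whatsoever for $\xi-\eta$ to be $\sim$-equivalent to $\eta$ or $\zeta$. So no Rado-type constraint on the exponent vector $(1,n-1,-k)$ is ever available, and ``repeatedly feeding partition regularity back in'' cannot get started.

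The paper's actual device is one you do not mention: integer logarithms. From $\alpha^{n-1}\varepsilon\asymp\gamma^{k}$ one gets $(n-1)l(\alpha)+l(\varepsilon)=kl(\gamma)+c$ with $c\in\Z$; the crude bound $0\le l(\varepsilon)\le l(\alpha)$ then sandwiches $kl(\gamma)$ between $(n-1)l(\alpha)$ and $nl(\alpha)$. This forces $l(\alpha)\asymp l(\gamma)$, and since $l$ preserves $\sim$ we also have $l(\alpha)\sim l(\gamma)$, whence Theorem~\ref{2 stessa classe e sim allora st=1} gives $st\bigl(l(\gamma)/l(\alpha)\bigr)=1$. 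Dividing the sandwich by $l(\alpha)$ and taking standard parts yields $n-1\le k\le n$. The point is that $\varepsilon$ contributes nothing beyond the inequality $l(\varepsilon)\le l(\alpha)$: passing to logarithms reduces the problem to a comparison between the two $\sim$-equivalent quantities $l(\alpha)$ and $l(\gamma)$, where Theorem~\ref{2 stessa classe e sim allora st=1} applies directly --- no Rado condition on the uncontrolled term $\varepsilon$ is ever needed.
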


Nonstandard methods have been intensively used in this area of research in the past 15 years, and are usually based on the well-known notion of $u$-equivalence $\sim$ (see Definition \ref{1 def sim}). Being a non-Archimedean structure, $\s\N$ also has its own notion of Archimedean equivalence $\asymp$. Our main result characterizing asymptotic PR is that $\sim$ and $ \asymp$ interact as specified in the following Theorem, that will be our main tool in this paper.

\begin{theorem*}[C]
 Let $P\left(x_{1},\dots,x_{n}\right)\in \Z\left[x_{1},\dots,x_{n}\right]$, and let $I_{1}\uplus\dots \uplus I_{s}=\{1,\dots,n\}$. The following are equivalent:
    \begin{enumerate}
        \item $P(x_1,\dots, x_n)= 0 $ is asymptotically PR in $I_{1},\dots,I_{s}$;
        \item There exists $(\alpha_1,\dots, \alpha_n)\in \s\N^n$ such that:
        \begin{itemize}
        \item[(i)]  for all $i,j\leq n,\  \alpha_i\sim\alpha_j$;
            \item[(ii)] for all $r\leq s$, for all $i,j\in I_{r}$ $\alpha_{i}\asymp \alpha_{j}$;
            \item[(iii)] for all $1\leq r < t \leq s$, for all $i\in I_{r},j\in I_{t}$ $\alpha_{i}\gg\alpha_{j}$;
            \item[(iv)] $P\left(\alpha_{1},\dots,\alpha_{n}\right)=0$.
        \end{itemize}
    \end{enumerate}  
     
\end{theorem*}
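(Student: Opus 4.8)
The plan is to prove the equivalence by transferring between the standard combinatorial statement and its nonstandard reformulation via ultrafilters and the canonical correspondence between colorings and elements of $\s\N$. The key tool is the standard fact (underlying the nonstandard approach to Ramsey theory) that a configuration is PR if and only if there is a ``monochromatic'' tuple in $\s\N$ in the sense of $u$-equivalence: $\alpha\sim\beta$ holds for all coordinates precisely when, for every finite coloring $c$ of $\N$, the tuple can be realized with all coordinates in one color. So the content of Theorem~C beyond that classical fact is the management of the two non-Archimedean relations $\asymp$ and $\gg$, i.e.\ matching clauses (1) and (2) of Definition~\ref{defn:asymPR} with clauses (ii) and (iii).

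First I would do the direction $(2)\Rightarrow(1)$, which is the ``soft'' transfer direction. Given $(\alpha_1,\dots,\alpha_n)\in\s\N^n$ satisfying (i)--(iv), fix an arbitrary finite coloring $c\colon\N\to\{1,\dots,k\}$ and an arbitrary $N\in\N$. Extend $c$ to $\s c\colon\s\N\to\{1,\dots,k\}$; by (i), all the $\alpha_i$ receive the same color under $\s c$ — this is the usual consequence of $\sim$. Next observe that (ii) says $\alpha_i\asymp\alpha_j$ for $i,j$ in the same class, which unwinds to: there are standard $m,M$ with $\frac{1}{m}\alpha_j\le\alpha_i\le M\alpha_j$; combined with $\alpha_i\sim\alpha_j$ one actually gets $\alpha_i/\alpha_j\approx$ a finite noninfinitesimal number, and in fact for same-class indices $\alpha_i/\alpha_j$ is infinitely close to $1$ (this is what $\sim$ plus $\asymp$ forces, and is the reason clause (1) of the definition has ``$-1$''; I would isolate this as the crucial small lemma). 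Likewise (iii) says $\alpha_i\gg\alpha_j$, i.e.\ $\alpha_i/\alpha_j$ is infinite, hence in particular $>N$. So the internal statement ``there exist $a_1,\dots,a_n\in\s\N$, monochromatic for $\s c$, satisfying $P=0$, with $|a_i/a_j-1|<1/N$ for same-class pairs and $a_i>N a_j$ for $r<t$ pairs'' is witnessed by $(\alpha_1,\dots,\alpha_n)$; by transfer (downward) the corresponding standard statement holds, giving the required monochromatic solution in $\N$. Since $c$ and $N$ were arbitrary, $P=0$ is asymptotically PR in $I_1,\dots,I_s$.

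For $(1)\Rightarrow(2)$, I would build the tuple $(\alpha_1,\dots,\alpha_n)$ by an ultrafilter/saturation argument. For each $N$, asymptotic PR applied to $N$ and to \emph{every} finite coloring simultaneously (using the standard compactness trick: consider the family of solution sets as $c$ ranges over colorings, or equivalently work with the ``universal'' coloring given by a fixed nonprincipal ultrafilter on the relevant space) produces, inside a sufficiently saturated extension $\s\N$, a tuple satisfying (iv) together with the approximate versions of (ii) and (iii) at level $N$, and satisfying (i). Concretely: consider the internal set $S_N$ of tuples $(a_1,\dots,a_n)\in\s\N^n$ that are $\s c$-monochromatic for the relevant coloring(s), solve $P=0$, and meet the level-$N$ constraints $|a_i/a_j-1|<1/N$, $a_i>N a_j$. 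Asymptotic PR (plus the correspondence between colorings and points of $\s\N$, which is exactly how $\sim$ is defined in Definition~\ref{1 def sim}) guarantees each $S_N$ is nonempty; the family $\{S_N\}_N$ has the finite intersection property since $S_{N'}\subseteq S_N$ for $N'\ge N$; so by $\aleph_1$-saturation (or countable saturation, which we may assume of $\s\N$) there is a tuple in $\bigcap_N S_N$. That tuple then satisfies: (iv) directly; (ii) because $|a_i/a_j-1|<1/N$ for all standard $N$ means $a_i/a_j\approx1$, whence in particular $a_i\asymp a_j$; (iii) because $a_i>N a_j$ for all standard $N$ means $a_i/a_j$ is infinite, i.e.\ $a_i\gg a_j$; and (i) because belonging to all the monochromatic sets is precisely the defining property of $\sim$ — all coordinates are $u$-equivalent. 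One has to be slightly careful that the monochromaticity condition is phrased so that the intersection over all $N$ still forces $\sim$ on the nose; this is handled by the same device used to characterize $\sim$, namely letting $\alpha_i\sim\alpha_j$ iff they lie in the same cell of $\s c$ for every finite $c$, and noting our tuples were chosen monochromatic for every $c$ at every level $N$.

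The main obstacle I expect is \emph{not} the transfer bookkeeping but the precise interplay of $\sim$ with $\asymp$ and with $\gg$ — specifically, verifying that ``$\sim$ plus same-class'' upgrades the crude Archimedean bound $\asymp$ to the sharp infinitesimal-closeness $a_i/a_j\approx1$ matching clause~(1) of Definition~\ref{defn:asymPR}, and dually that the definition's clause~(2) (``$Na_j<a_i$ for all $N$'') is genuinely equivalent to $\alpha_i\gg\alpha_j$ rather than merely implied by it. I would handle this by proving once and for all a small lemma: for $\alpha\sim\beta$ in $\s\N$, $\alpha\asymp\beta$ iff $\alpha/\beta$ is finite and noninfinitesimal iff $\alpha/\beta\approx1$ (the last step using that $u$-equivalent hypernaturals in the same Archimedean class cannot differ by a noninfinitesimal ratio without being separated by some coloring — e.g.\ color by $\lfloor\log_2 x\rfloor \bmod 2$ or a finer scale-based coloring — a point that should already be available from the machinery developed before this theorem). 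Once that lemma is in hand, both directions reduce to routine transfer and saturation as sketched above.
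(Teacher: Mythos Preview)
Your overall approach matches the paper's: $(2)\Rightarrow(1)$ by downward transfer after checking that the nonstandard tuple witnesses the required inequalities for the given $N$ and coloring, and $(1)\Rightarrow(2)$ by a saturation argument over a family of solution sets. You also correctly identify the one nontrivial ingredient in the $(2)\Rightarrow(1)$ direction, namely that $\alpha\sim\beta$ together with $\alpha\asymp\beta$ forces $st(\alpha/\beta)=1$; the paper states this as Theorem~\ref{2 stessa classe e sim allora st=1} (imported from \cite{di2025ramsey}) and uses it exactly where you do.

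There is, however, a genuine gap in your execution of $(1)\Rightarrow(2)$. You describe internal sets $S_N$ indexed only by $N$, assert that $\{S_N\}_N$ is nested, and invoke \emph{countable} saturation. This cannot work as written: to obtain $\alpha_i\sim\alpha_j$ one must guarantee that for \emph{every} $A\subseteq\N$ the tuple lies entirely in $\s A$ or entirely in $\s(A^c)$, and ``monochromatic for all finite colorings simultaneously'' is an external condition, so it cannot be folded into a single internal $S_N$. The paper resolves this by indexing over pairs $(N,A)$ with $A\subseteq\N$, setting
\[
\Gamma(N,A)=\bigl\{\vec x\in\N^n : P(\vec x)=0,\ \ostar_N(\vec x),\ \text{and}\ (\vec x\in A^n\ \text{or}\ \vec x\in (A^c)^n)\bigr\},
\]
checking that this family has the FIP (any finite subfamily involves only finitely many $A$'s, which refine to a single finite coloring, and then asymptotic PR applies with the maximum of the finitely many $N$'s), and intersecting the $\s\Gamma(N,A)$. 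Since there are $2^{\aleph_0}$ sets in the family, one needs the $2^{\aleph_0}$-saturation assumed in Section~\ref{sec:nsa}, not $\aleph_1$-saturation. With this correction your argument coincides with the paper's.
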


The paper is structured as follows. Section 1 is divided in three parts: in the first, we prove some basic standard facts regarding asymptotic PR, including Theorem (A) and a complete characterization of the linear case; in the second, we recall the basic properties of $\sim$ that we need; in the third, we relate asymptotic PR with Archimedean classes of hypernaturals and prove our Theorem (C). Section 2 is devoted to find necessary conditions for the PR of equations. More precisely, in Section 2.1 we give the promised new proof of several known such conditions, whilst in Section 2.2 we prove Theorem (B) and some similar negative results. In Section 3 we relate asymptotic PR to ultrafilters: in Section 3.1 we recall the relation between $\s\N$ and $\beta\N$, in Section 3.2 we provide very compact proofs of classical results about linear equations involving ultrafilters, and in Section 3.3 we provide a characterization of the dual notion of Archimedean classes of ultrafilters. Finally, in section 4 we present some interesting open problems that arose from our work.

\paragraph{Funding}

LLB was supported by the project PRIN 2022 ``Logical methods in combinatorics'', 2022BXH4R5, Italian Ministry of University and Research (MUR).

\section{Asymptotic Partition Regularity and Archimedean classes of hypernaturals}

\subsection{Asymptotic Partition Regularity}\label{sec stand asymp}

As said in the introduction, asymptotic PR, as defined in Definition \ref{defn:asymPR}, is our main topic in this paper. We start by proving Theorem (A) from the introduction, connecting asymptotic PR and PR. 
\begin{theorem}\label{thmasympclas} Let $P\left(x_1,\dots, x_n\right)\in\Z[x_{1},\dots,x_{n}]$. The following facts are equivalent:
\begin{enumerate}
    \item $P\left(x_{1},\dots,x_{n}\right)=0$ is PR;
    \item $P\left(x_{1},\dots,x_{n}\right)=0$ is asymptotically PR in $I_{1},\dots,I_{s}$ for some $s\leq n, I_{1},\dots,I_{s}\subseteq\{1,\dots,n\}$.    
\end{enumerate}
\end{theorem}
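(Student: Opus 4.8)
The plan is to prove the two implications separately, with the direction $(2)\Rightarrow(1)$ being nearly immediate and $(1)\Rightarrow(2)$ requiring a compactness/pigeonhole argument. For $(2)\Rightarrow(1)$: suppose $P=0$ is asymptotically PR in $I_1,\dots,I_s$. Fix any finite coloring of $\N$. Applying the definition with, say, $N=1$ (or any fixed $N$), we obtain a monochromatic solution $a_1,\dots,a_n$ of $P=0$; this alone witnesses that $P=0$ is PR. So the content is really in $(1)\Rightarrow(2)$.

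For $(1)\Rightarrow(2)$: assume $P=0$ is PR. The idea is that, although PR only guarantees a monochromatic solution for each coloring without control on the relative sizes of the $a_i$, we can extract such control by a limiting argument over finer and finer colorings. Concretely, for each $N\in\N$ consider a fixed coloring $c$ of $\N$; PR gives a monochromatic solution $(a_1^{(N)},\dots,a_n^{(N)})$. By passing to a subsequence of the $N$'s, we may assume that the ordering of the coordinates by magnitude stabilizes, i.e. there is a fixed preorder on $\{1,\dots,n\}$ describing which $a_i^{(N)}/a_j^{(N)}$ stay bounded and which tend to infinity. This preorder partitions $\{1,\dots,n\}$ into classes $I_1,\dots,I_s$ (ordered so that coordinates in $I_r$ dominate those in $I_t$ for $r<t$), and along the subsequence the solutions satisfy conditions (1) and (2) of Definition \ref{defn:asymPR} for arbitrarily large $N$ — except that the coloring was fixed, whereas asymptotic PR must hold for \emph{every} coloring. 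To fix this, one runs the argument simultaneously: given an arbitrary coloring and an arbitrary target $N$, one must produce a single monochromatic solution with the size separations governed by $N$. The cleanest route is to invoke Theorem (C), which is proved independently in Section 1.3: it suffices to exhibit $(\alpha_1,\dots,\alpha_n)\in\s\N^n$ with all $\alpha_i\sim\alpha_j$, grouped into Archimedean classes $I_1\gg\dots\gg I_s$, solving $P=0$. Since $P=0$ is PR, by the standard nonstandard characterization of PR there is an $n$-tuple of mutually $u$-equivalent hypernaturals solving $P=0$; grouping its coordinates into their Archimedean equivalence classes and reindexing so that these classes are linearly ordered by $\gg$ yields exactly the data required by Theorem (C)(2), hence $P=0$ is asymptotically PR in those $I_1,\dots,I_s$.

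The main obstacle is the passage from ``for a fixed coloring we get a solution with controlled ratios'' to ``for every coloring and every $N$ we get one'': a naive diagonalization over colorings does not obviously work because refining the coloring changes the solution. This is precisely why it is cleanest to route the proof through the nonstandard picture, where a single hypernatural solution simultaneously encodes monochromatic solutions for all colorings (via the ultrafilter/$\sim$ correspondence) and carries an intrinsic Archimedean stratification; the combinatorial size-separation conditions (1)–(2) then fall out of conditions (ii)–(iii) of Theorem (C) by transfer. I would therefore present $(1)\Rightarrow(2)$ as: PR $\Rightarrow$ existence of a monochromatic nonstandard solution with all coordinates $u$-equivalent $\Rightarrow$ (take Archimedean classes) the hypotheses of Theorem (C)(2) $\Rightarrow$ asymptotic PR, and remark that the reverse implication $(2)\Rightarrow(1)$ is trivial, or alternatively also follows from Theorem (C) together with the nonstandard characterization of PR.
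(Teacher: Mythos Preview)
Your proposal is correct and coincides with the paper's \emph{second} proof of this theorem, given right after Theorem~\ref{thm:nscharasympPR}: PR yields a $\sim$-equivalent nonstandard solution, one groups its coordinates into Archimedean classes, and Theorem~(C) converts this into asymptotic PR. You also correctly flag that the naive subsequence argument over a fixed coloring does not suffice and that routing through the nonstandard picture is what resolves the quantifier issue.

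It is worth noting that the paper also supplies a \emph{first}, purely elementary proof immediately after the statement, which avoids nonstandard methods and does not rely on Theorem~(C). That argument runs $(1)\Rightarrow(2)$ by contradiction: enumerate the $B_n$ partitions $P_1,\dots,P_{B_n}$ of $\{1,\dots,n\}$; for each $P_i$ the failure of asymptotic PR in $P_i$ yields a finite coloring $Q_i$ and a bound $N_i$; let $Q$ be a common refinement of the $Q_i$ and $N=\max_i N_i$; PR gives a $Q$-monochromatic solution $(a_1,\dots,a_n)$, and grouping its coordinates by the ratio condition $\lvert a^{(j)}/a_i - 1\rvert < 1/N$ produces one of the enumerated partitions $P_i$, contradicting the choice of $Q_i,N_i$. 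Your route is cleaner once Theorem~(C) is in hand; the elementary proof buys self-containment and logical independence from the nonstandard machinery, which is why the paper places it first.
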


\begin{proof} $(1)\Rightarrow (2)$ By contrast: if not, consider all the $B_{n}$ possible finite partitions of $\{1,\dots,n\}$, where $B_{n}\in\N$ is the $n$-th Bell number. Enumerate this set of partitions as $\{P_{1},\dots,P_{B_{n}}\}$. For each $i\leq B_{n}$, by our hypothesis there exist a finite partition $Q_{i}$ of $\N$ and a finite number $N_{i}$ such that for all $Q_{i}$-monochromatic solution of $P\left(x_{1},\dots,x_{n}\right)=0$ at least one of the conditions in Definition \ref{defn:asymPR} fails for $P_{i},N_{i}$. 

Let $Q$ be a common refinement of $Q_{1},\dots,Q_{B_{n}}$, i.e. any set that is $Q$-monochromatic is also $Q_{i}$-mochromatic for all $i\leq B_{n}$, and let $N:=\max\{N_{i}\mid i\leq B_{n}\}$. As $P\left(x_{1},\dots,x_{n}\right)=0$ is PR, there exists a $Q$-monochromatic solution $a_{1},\dots,a_{n}$. Let $a^{(1)}=\max\{a_{i}\mid i\leq n\}$ and let

\[I_{1}=\left\{a_{i}\mid i\leq n, \left\vert\frac{a^{(1)}}{a_i}-1\right\vert<\frac{1}{N}\right\}\]

and, inductively, assuming to have defined $I_{1},\dots, I_{j}$, let $a^{(j)}=\max\{a_{i}\mid i\in \{1,\dots,n\}\setminus\left(I_{1}\uplus\dots \uplus I_{j}\right)\}$ and let 

\[I_{j+1}=\left\{a_{i}\mid i\in \{1,\dots,n\}\setminus\left(I_{1}\uplus\dots \uplus I_{j}\right), \left\vert\frac{a^{(j)}}{a_i}-1\right\vert<\frac{1}{N}\right\}.\]

This gives a partition of $\{1,\dots,n\}$. Assume that, in our enumeration of the partitions of $\{1,\dots,n\}$, this partition is $P_{i}$.
By construction, $a_{1},\dots,a_{n}$ is a $Q$-monochromatic solution of $P(x_{1},\dots,x_{n})=0$ that satisfies conditions $(1)$ and $(2)$ of Definition \ref{defn:asymPR} for $P_{i},N$. Hence, as $Q$ refines $Q_{i}$ and $N\geq N_{i}$, $a_{1},\dots,a_{n}$ is also a $Q_{i}$-monochromatic solution of $P(x_{1},\dots,x_{n})=0$ that satisfies conditions $(1)$ and $(2)$ of Definition \ref{defn:asymPR} for $P_{i}, N_{i}$, which is a contradiction.

$(2)\Rightarrow (1)$ This is immediate.\end{proof}

\begin{example}\label{ex semplici lineari} Let us consider two simple examples.
\begin{enumerate}
    \item The simplest example of a PR equation is Schur's equation $x+y=z$. It is easily seen that this equation can only be asymptotically PR in $\{x,z\},\{y\}$ or $\{y,z\},\{x\}$: in fact, $z$ is clearly larger than $x,y$, but it cannot be $I_{1}=\{z\}$, as this would force $x,y$ to be in smaller asymptotic classes, hence $x+y$ to be smaller than $z$. Moreover, it cannot be $I_{1}=\{x,y,z\}$ as, for example, $\vert\frac{x}{y}-1\vert<\frac{1}{2}$ forces (as $z=x+y$) $\vert \frac{z}{y}-1\vert >\frac{1}{2}$, against condition (1) in Definition \ref{defn:asymPR}. As $x,y$ can clearly be exchanged in this equation, this example shows also that the asymptotic classes are not uniquely determined, in general.
    \item With similar arguments one can show that, if the Pythagorean equation $x^2 + y^2=z^2$ is PR, then it must be asymptotically PR in $\{x,z\}, \{y\}$ or $\{y,z\},\{x\}$.
    \item Let us consider now the PR equation $x+2y=z$. With similar arguments as above, it is simple to show that the asymptotic classes in this case are necessarily $I_{1}=\{x,z\}$ and $I_{2}=\{y\}$.
\end{enumerate}\end{example}

In the linear case, asymptotic PR can be fully characterized. In fact, the asymptotic conditions in Definition \ref{defn:asymPR} have two forms: conditions on variables in the same asymptotic class, and conditions on variables in different asymptotic classes.

Saying that $x_{i},x_{j}$ belongs to the same asymptotic class is equivalent to the fact that, for all $N$, we can solve monochromatically the system made of our original equation, with the addition of the inequalities

\begin{equation}
    \begin{cases}
      Nx_{i}-x_{j}>0, \\
      -x_{i}+Nx_{j}>0.\end{cases}
\end{equation}
When $x_{i},x_{j}$ belong to two different asymptotic classes, say $x_{i}$ in the largest, we have to be able, for all $N\in\N$, to solve monochromatically the original equation under the additional condition $x_{i}-x_{j}\geq N$; in the language introduced in \cite{LBARadoFunctionals}, this amounts to be able to solve monochromatically the system
\begin{equation}\label{sto troiaio}
    \begin{cases}
      P\left(x_{1},\dots,x_{n}\right)=0, \\
      x_{i}-x_{j}=\infty.\end{cases}
\end{equation}

In \cite[Lemma 2.18]{LBARadoFunctionals}, the authors proved that, when $P\left(x_{1},\dots,x_{n}\right)$ is actually a system of linear equations, the PR of system (\ref{sto troiaio}) is equivalent to the PR of the following simpler one:
\begin{equation}\label{sto troiaio}
    \begin{cases}
      P\left(x_{1},\dots,x_{n}\right)=0, \\
      x_{i}-x_{j}>0.\end{cases}
\end{equation}

When $P\left(x_{1},\dots,x_{n}\right)$ is a linear equation\footnote{A similar argument would work for systems of linear equations, but we will not consider such a case in this paper.}, a characterization of the PR regularity of systems where $P\left(x_{1},\dots,x_{n}\right)=0$ has to be solved under additional linear inequalities constraints was given by Hindman and Leader in \cite[Theorem 2]{HindmanLeaderIneq}, that we recall.

\begin{theorem}[\cite{HindmanLeaderIneq}, Theorem 2]\label{HLI}
    Let $A$ be an $m\times n$ rational matrix, and let $b^{(j)}_i$, $i = 1,..., n, j = 1,..., d$ be rationals. Then the following are equivalent: \begin{itemize}
        \item for every finite coloring of $\N$ there is a monochromatic vector $x \in \N^n$ with $Ax = 0$
        and $\sum_{i=1}^n b_i^{(j)} x_i \geq 0$ for $j\leq d$;
        \item there are positive rationals $q_1,\dots, q_d$ such that the system of equations formed from
        $Ax = 0$ by adding new variables $z_1,\dots,z_d$ and new equations $\sum_{i=1}^n b_i^{(j)} x_i - q_j z_j= 0$ for $j\leq d$ is partition regular.
    \end{itemize}
\end{theorem}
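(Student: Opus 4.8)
The backward direction is immediate: given a finite coloring of $\N$ and a monochromatic solution $(x_1,\dots,x_n,z_1,\dots,z_d)\in\N^{n+d}$ of the augmented system, the vector $x=(x_1,\dots,x_n)$ is monochromatic, $Ax=0$, and $\sum_{i=1}^n b_i^{(j)}x_i=q_jz_j>0$ for each $j\le d$ (since $q_j>0$ and $z_j\ge 1$), so the inequality constraints hold.

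For the forward direction, after clearing denominators we may assume $A$ and the $b_i^{(j)}$ are integral; write $L_j(x):=\sum_{i=1}^n b_i^{(j)}x_i$, $E:=\{x\in\N^n:\ Ax=0,\ L_j(x)\ge 0\ \text{for all }j\le d\}$, and let $K:=\{x\in\R^n_{\ge 0}:\ Ax=0,\ L_j(x)\ge 0\ \forall j\}$ be the rational polyhedral cone whose $\N$-points are exactly $E$. The plan has two stages. The first is to extract the constants $q_j$ from the geometry of $K$: after absorbing into $A$ every form $L_j$ that vanishes identically on $K$ (such a form is a redundant constraint), use the hypothesis --- which forces $K$ to be nontrivial --- to fix an integral point $x^\circ$ in the relative interior of $K$ and set $q_j:=L_j(x^\circ)>0$. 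The point of this choice is that the augmented homogeneous system $S_q$ (the equations $Ax=0$ together with $L_j(x)-q_jz_j=0$, $j\le d$) then satisfies Rado's columns condition: the columns attached to $x^\circ$ and to the new variables $z_j$ supply the vanishing and span-closing combinations required, the remaining bookkeeping reducing to Rado's columns condition for $A$ alone --- and $Ax=0$ is partition regular because it is solvable inside $E$ in every finite coloring. The second, and more delicate, stage is to turn this algebraic statement into the actual coloring assertion: given a finite coloring, locate via the hypothesis a monochromatic $x\in E$ sitting inside a sufficiently homogeneous monochromatic configuration --- an $(m,p,c)$-set, or a member of a minimal idempotent ultrafilter on $\N$ --- one stable under the rational dilations that absorb the factors $1/q_j$, and deduce that the auxiliary values $z_j:=L_j(x)/q_j$ fall in the same color class as $x$.

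The crux, and the step I expect to be hardest, is precisely this color-matching: partition regularity of $S_q$ is about the color of the \emph{enlarged} tuple $(x,z)$, while the hypothesis only governs the color of $x$, so the dilates $z_j=L_j(x)/q_j$ must be forced into the same piece of the partition, uniformly over \emph{all} finite colorings and with a \emph{single} choice of the $q_j$. In the nonstandard idiom of this paper the problem reads: produce $\alpha\in\s\N^n$ with $\alpha_1\sim\dots\sim\alpha_n$, $A\alpha=0$ and $L_j(\alpha)\ge 0$, and then, for each surviving $j$, exhibit a hypernatural $\zeta_j$ that is $u$-equivalent to the $\alpha_i$ with $L_j(\alpha)=q_j\zeta_j$ for some standard positive rational $q_j$ --- which compels one to control how large $L_j(x)$ is relative to $x$ on monochromatic solutions, ruling out that $L_j(\alpha)$ could be, say, infinitely smaller than the $\alpha_i$. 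Choosing the $q_j$ from an interior point of $K$, and stripping off beforehand the forms that degenerate on $E$, are exactly the maneuvers that make this control possible.
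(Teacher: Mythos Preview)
The paper does not supply its own proof of this theorem: it is quoted verbatim from Hindman and Leader and merely recalled for use in the first proof of Theorem \ref{thm char linear asymp}. So there is no in-paper argument to compare your sketch against.

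On the merits of your sketch: the backward direction is correct. The forward direction, however, has a structural tension. If Stage 1 really establishes that the augmented system $S_q$ satisfies Rado's columns condition, then by Rado's theorem for systems $S_q$ is partition regular and you are finished --- Stage 2 is superfluous. Conversely, if you are relying on Stage 2, then that is the whole argument, and you yourself flag its crux (forcing $z_j = L_j(x)/q_j$ into the same color as $x$, uniformly over all colorings and for a \emph{single} fixed choice of the $q_j$) as unresolved. Moreover, the Stage 1 claim that ``the columns attached to $x^\circ$ \dots\ supply the vanishing and span-closing combinations'' is suspect as written: Rado's columns condition requires the \emph{unweighted} sum of a first block of columns to vanish, not a sum weighted by the coordinates of an interior point $x^\circ$. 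The natural attempt --- taking a partition $I_1,\dots,I_s$ witnessing the columns condition for $A$ and adjoining all the $z$-columns to $I_1$ --- forces $q_j = \sum_{i\in I_1} b_i^{(j)} = L_j(\mathbf{1}_{I_1})$, and nothing in your setup guarantees that these quantities are positive, nor that \emph{some} admissible first block $I_1$ has this property; this is precisely where the inequality hypothesis must enter, and it does not appear in your Stage 1 at all beyond certifying that $A$ itself has the columns condition. Your closing nonstandard reformulation accurately restates the difficulty (controlling the Archimedean size of $L_j(\alpha)$ relative to the $\alpha_i$) but does not resolve it. So as it stands the forward direction is an outline with a genuine gap exactly where you locate it.
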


By using the above result, we can now show that linear equations can always be solved with just two asymptotic classes: the largest one corresponding to variables whose coefficients sums to zero, and the smallest one containing all the remaining ones.

\begin{theorem}\label{thm char linear asymp}
Let $c_{1},\dots,c_{n}\in\Z\setminus\{0\}$, let $k\leq n$ and assume that $\sum_{i=1}^{k} c_{i}=0$. Then the linear equation $\sum_{i=1}^{n}c_{i}x_{i}$ is asymptotically PR in $I_{1}=\{1,\dots,k\}, I_{2}=\{k+1,\dots,n\}$.\end{theorem}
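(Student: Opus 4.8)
The plan is to reduce asymptotic PR of the equation to the partition regularity of a single finite \emph{purely equational} system, for which Rado's columns condition can be checked by hand; the hypothesis $\sum_{i=1}^{k}c_{i}=0$ will be used exactly once, and decisively, in that check.

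Write $L$ for $\sum_{i=1}^{n}c_{i}x_{i}=0$. First I would unwind Definition~\ref{defn:asymPR} using the discussion preceding the statement: together with \cite[Lemma~2.18]{LBARadoFunctionals} (which replaces ``$x_{i}-x_{j}=\infty$'' by ``$x_{i}-x_{j}>0$'') and the reduction of ``monochromatic solutions arbitrarily close'' to ``monochromatic solutions within a fixed factor for every $N$'', asymptotic PR of $L$ in $I_{1}=\{1,\dots,k\}$ and $I_{2}=\{k+1,\dots,n\}$ is equivalent to the following assertion: for every $N\ge 2$, the system $\Sigma_{N}$ consisting of $L$, of $x_{i}-x_{j}>0$ for all $i\in I_{1}$, $j\in I_{2}$, and of $Nx_{i}-x_{j}>0$ for all $i\ne j$ lying in a common class, is PR. So I would fix $N\ge 2$ and prove that $\Sigma_{N}$ is PR.

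Next, following the recipe of Theorem~\ref{HLI}, I would turn $\Sigma_{N}$ into a purely equational system $\widehat\Sigma_{N}$ by replacing each strict inequality $\ell>0$ with an equation $\ell-qz=0$ for a fresh variable $z$, choosing $q=N-1$ for the ratio inequalities $Nx_{i}-x_{j}>0$ and $q=1$ for the separating inequalities $x_{i}-x_{j}>0$. Since in a monochromatic solution over $\N$ every fresh variable is $\ge 1$, such a solution automatically satisfies all strict inequalities of $\Sigma_{N}$; hence it is enough to show that $\widehat\Sigma_{N}$ is PR, which I would do via Rado's theorem \cite{Rado}. The columns of $\widehat\Sigma_{N}$ are split into two blocks along the two classes: $B_{1}$ collects the columns of $x_{1},\dots,x_{k}$, of every fresh variable attached to a ratio inequality inside $I_{1}$, and of every fresh variable attached to a separating inequality; $B_{2}$ collects the columns of $x_{k+1},\dots,x_{n}$ and of all remaining fresh variables. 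A direct computation gives $\sum B_{1}=0$: in the row coming from $L$ this reads precisely $\sum_{i=1}^{k}c_{i}=0$ (the single use of the hypothesis), in each ratio row the coefficients cancel as $N-1-(N-1)=0$, and in each separating row $x_{i}-x_{j}-v=0$ with $i\in I_{1}$ the contributions of $x_{i}$ and $v$ cancel. One then checks that $\sum B_{2}$ is supported only on the row of $L$ (where it equals $\sum_{i=k+1}^{n}c_{i}$) and on the separating rows (where it is $-1$), and that it lies in the span of $B_{1}$: the fresh separating variables absorb the separating rows, and the column of $x_{1}$ --- whose $L$-entry $c_{1}$ is nonzero --- together with the fresh ratio variables inside $I_{1}$, absorbs the $L$-row. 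Hence $\widehat\Sigma_{N}$ satisfies the columns condition and is PR, and the theorem follows.

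Essentially all of the content sits in the two reductions rather than in any hard combinatorics. The delicate step is the first one: justifying that, for a \emph{linear} equation, Definition~\ref{defn:asymPR} collapses to partition regularity of the systems $\Sigma_{N}$ (this is where the earlier remarks and \cite[Lemma~2.18]{LBARadoFunctionals} are needed, including the passage from ``within a fixed factor'' to ``arbitrarily close''). The only genuinely creative point afterwards is the choice of the auxiliary coefficients $q$, tuned so that the variables of the larger class $I_{1}$, together with their auxiliary variables, form a zero-sum first block; everything past that is a mechanical verification of the columns condition.
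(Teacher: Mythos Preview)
Your proposal is correct and follows essentially the same route as the paper's first proof: reduce asymptotic PR to the PR of a linear system with inequalities via the discussion preceding the theorem and \cite[Lemma~2.18]{LBARadoFunctionals}, then invoke Theorem~\ref{HLI} with the auxiliary coefficients $q=N-1$ for the ratio inequalities and $q=1$ for the separating ones, and finally verify the columns condition with a first block consisting of the $I_{1}$-variables together with their attached fresh variables. The only cosmetic difference is that you keep \emph{all} pairwise ratio and separating inequalities, whereas the paper keeps only a star-shaped family (ratios to $x_{1}$ inside $I_{1}$, to $x_{k+1}$ inside $I_{2}$, and a single separating inequality $x_{1}-x_{k+1}>0$); both choices lead to the same zero-sum first block and the same verification that the remaining columns lie in its span.
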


\begin{proof} By the above discussion, Theorem \ref{HLI} and Rado's characterization of the PR of linear systems (see \cite{Rado}), our thesis is equivalent to prove that for all $N\in\N$, for all $2\leq i\leq k$, $k+1<j\leq n$ there exist $q_{1,i}, q_{i,1}, q_{k+1,j}, q_{j,k+1}, q_{1,k+1}\in \mathbb{Q}_{>0}$ such that the $2n\times (3n-1)$ matrix\\

\hspace{-0.5cm}\resizebox{\linewidth}{!}{
$\displaystyle\begin{pmatrix}
        c_{1} & c_{2} & c_{3} & \dots & c_{k} & c_{k+1} & c_{k+2} & \dots & c_{n} & 0 & 0 & \dots & \dots &0 & 0& \dots & \dots&0  \\
        N & -1 & 0 & \dots & 0 &  0 & 0 &  \dots & 0 & -q_{1,2} & 0 &\dots & \dots & 0& 0&\dots & \dots&0\\
        -1 & N & 0 & \dots & 0 &  0 & 0 &  \dots & 0 & 0 & -q_{2,1}  & \dots&\dots & 0 &0 &\dots &\dots& 0 \\
\vdots&\vdots&\vdots&\vdots&\vdots&\vdots&\vdots&\vdots&\vdots&\vdots&\vdots&\vdots&\vdots&\vdots&\vdots&\vdots&\vdots&\vdots\\
        N & 0 & 0 & \dots & -1 &  0 & 0 &  \dots & 0 & 0 & 0 &\dots  & -q_{1,k} &0 & 0& \dots& \dots&0\\
        -1 & 0 & 0 & \dots & N &  0 & 0 &  \dots & 0 & 0 & 0 &\dots  & 0 & -q_{k,1} & 0& \dots&\dots& 0\\
        0 & 0 & 0 & \dots & 0 &  N & -1 &  \dots & 0 & 0 & 0 &\dots  & 0 & 0& -q_{k+1,k+2} & 0& \dots& 0\\
\vdots&\vdots&\vdots&\vdots&\vdots&\vdots&\vdots&\vdots&\vdots&\vdots&\vdots&\vdots&\vdots&\vdots&\vdots&\vdots&\vdots&\vdots\\
        0 & 0 & 0 & \dots & 0 &  -1 & 0 &  \dots & N & 0 & 0  & \dots & \dots & \dots &\dots &\dots & -q_{n,k+1}& 0 \\   
        1 & 0 & 0 & \dots & 0 &  -1 & 0 &  \dots & 0 & 0 & 0  & \dots & \dots & \dots &\dots &\dots& 0& -q_{1,k+1}    \end{pmatrix}$
        }\\

satisfies the columns condition. This is attained by letting $q_{1,k+1}=1$ and $q_{a,b}=N-1$ in all the other cases. In fact, with this choice, if for $i\leq 3n-1$ we let $C_{i}$ be the $i$-th column in the matrix, and we let $D=\{1,\dots,k,n+1,\dots,n+2k-2,3n-1\}$ we see that $\sum_{i\in D} C_{i} =0$, and $\sum_{i\in [1,3n-1]\setminus D} C_{i}$ belongs to Span$\{C_{i}\mid i\in D\}$ trivially.
\end{proof}

 \begin{rmrk} Let us remark explicitly that Theorem \ref{thm char linear asymp} does not state that the asymptotic classes in the linear case must necessarily be two. In fact, given a linear equation, for any subset of its coefficients that sums to zero we have a different partition in two asymptotic classes; this is the case of $x+y=z$, as discussed in the Example \ref{ex semplici lineari}).(1). 
 
 Moreover, there are linear equations that admit more than two asymptotic classes. For example, let $n\geq 3$ and consider the PR equation \begin{equation}\label{eq n classi}x_{1}-y_{1}+z_{1}+x_{2}-y_{2}+z_{2}+\dots+x_{n}-y_{n}+z_{n}=0.\end{equation} As $x-y=z$ is PR and admits, in any coloring, monochromatic solutions with arbitrarily large entries, arguing like in Example \ref{ex semplici lineari}.(1) it is immediate to see that Equation (\ref{eq n classi}) is aymptotically PR in $\{x_{1},y_{1}\},\{z_{1}\},\dots,\{x_{n},y_{n}\},\{z_{n}\}$, hence with $2n$ asymptotic classes.

 Finally, there are cases where the asymptotic classes are, indeed, uniquely determined, for example the equation $x+2y=z$ discussed in the Example \ref{ex semplici lineari}.(3).
 \end{rmrk}

Whilst standard methods could be used to study the basic properties of asymptotic PR, we prefer to adopt techniques coming from nonstandard analysis, as these will allow to simplify most of our arguments. Before doing so, we recall some well-known facts regarding the nonstandard approach to Ramsey theory.

\subsection{Preliminaries: $u$-equivalence}\label{sec:nsa}

    We assume the reader to be familiar with the basics of nonstandard analysis, see e.g. \cite{DGL} or \cite{Goldblatt}. In what follows, we work in a nonstandard extension $\s\N$ of $\N$, which we assume to be at least $2^{\aleph_{0}}$-saturated. In such a setting, it is possible to introduce the notion of $u$-equivalence. We refer to \cite{baglini2012hyperintegers,baglini2013partition,di2015hypernatural,di2018ramsey,DGL} for extended presentations of this nonstandard approach.

\begin{defn}\label{1 def sim}
    Let $k\in\N$ and let $\alpha, \beta\in\s\N^{k}$. We say that $\alpha,\beta$ are $u$-equivalent if for all $A\sub \N^{k}$ $$\alpha \in \s A \Leftrightarrow \beta \in \s A.$$
    We write $\alpha \sim \beta$ when $\alpha,\beta$ are $u$-equivalent.
\end{defn}

\begin{rmrk} Readers with a background in model theory will easily recognise that $\alpha \sim \beta$ if and only if they have the same type in the full language\footnote{Namely the language having a relation symbol for every subset of $\N^{k}$, for all $k\in\N$.} over $\N$. 
\end{rmrk}

Being $u$-equivalent can also be rephrased as follows: given $\alpha\in\s\N^{k}$, let $\U_{\alpha}=\{A\subseteq\N^{k}\mid \alpha\in \s A\}$. Then $\U_{\alpha}$ is an ultrafilter over $\N^{k}$, $\alpha$ is called a generator of $\U_{\alpha}$ and $\alpha\sim\beta$ if and only if $\U_{\alpha}=\U_{\beta}$, i.e. if and only if they generate the same ultrafilter. Subsection \ref{sec:ultrafilters and hyperextensions} will be devoted to give more details about the relation between numbers in $\s\N$ and ultrafilters over $\N$. 

\begin{defn} We say that $\alpha$ is a generator of $\U$ if $\U=\U_{\alpha}$, and write $\alpha\models \U$.\end{defn}

The identification of ultrafilters with their nonstandard generators is the idea behind Theorem 2.2.9 in \cite{baglini2012hyperintegers}, that we recall here in a slightly rephrased form for the easiness of the reader\footnote{More general versions of this theorem for the PR of arbitrary configurations hold, see e.g. Theorem 2.2.11 in \cite{baglini2012hyperintegers}.}. 

\begin{theorem}\label{2 Teo fondamentare PR sse esistono soluzioni equiv}
    $E(x_1,\dots, x_n)=0$ is PR if and only if there exist $\alpha_1\sim\dots \sim \alpha_n\in \s\N$ such that $$E(\alpha_1,\dots, \alpha_n)=0.$$
\end{theorem}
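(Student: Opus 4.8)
The plan is to prove both directions directly from the definition of $u$-equivalence and the overspill/saturation machinery that is standard in this area; in fact this is the core ``nonstandard reformulation of partition regularity'' result, so the argument is short once the right objects are set up.

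For the direction ``PR $\Rightarrow$ existence of a $\sim$-monochromatic solution'': suppose $E(x_1,\dots,x_n)=0$ is PR. I would build a suitable ultrafilter and take its generator. Concretely, consider the family $\mathcal{F}$ of all subsets of $\N$ of the form
\[
S_C = \{a\in\N : a \text{ lies in the piece of } C \text{ that contains a monochromatic solution}\},
\]
ranging over finite colorings $C$ of $\N$. More cleanly: for a finite coloring $C=\{C_1,\dots,C_r\}$, PR gives an index $i(C)$ and a solution $(a_1,\dots,a_n)\in C_{i(C)}^n$ with $E(\bar a)=0$; the relevant ``good'' set is $C_{i(C)}$. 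These good sets have the finite intersection property \emph{after refinement}: given finitely many colorings, pass to their common refinement $C$, apply PR to get a monochromatic solution inside one piece $P$ of $C$, and note $P$ is contained in one piece of each original coloring. Hence $\{P\}$ together with all pieces of the refinement generates a proper filter. Extend it to an ultrafilter $\U$ on $\N$; then $\U$ has the key property that \emph{every} set $A\in\U$ contains a solution of $E=0$ (since the partition $\{A,\N\setminus A\}$ is a finite coloring, so one of the two pieces contains a monochromatic solution, and it cannot be $\N\setminus A$ if we chose $\U$ to contain the ``good'' piece — here one must be slightly careful and instead build $\U$ so that for every coloring it contains the piece with a solution, which is exactly what the refinement argument delivers). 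Now take $\alpha\in\s\N$ with $\alpha\models\U$, i.e. $\U_\alpha=\U$. Set $\alpha_1=\dots=\alpha_n=\alpha$; trivially $\alpha_i\sim\alpha_j$. It remains to see $E(\alpha,\dots,\alpha)=0$, i.e. $\alpha\in\s Z$ where $Z=\{a\in\N:E(a,\dots,a)=0\}$ — but this may fail, so in general the $\alpha_i$ must be distinct. The fix, which is the actual content of the theorem: one works in $\s\N^n$, builds an ultrafilter $\U$ on $\N^n$ whose every member contains a monochromatic solution tuple, takes $(\alpha_1,\dots,\alpha_n)\models\U$, and observes (a) $E(\bar\alpha)=0$ by transfer since $Z_n=\{\bar a:E(\bar a)=0\}\in\U$, and (b) $\alpha_i\sim\alpha_j$ because for any $A\subseteq\N$ the set $\{\bar a: a_i\in A \iff a_j\in A\}$ lies in $\U$ (each monochromatic tuple has all coordinates in one color, hence $a_i\in A\iff a_j\in A$ is witnessed whenever $A$ or its complement is that color — this needs the colorings generating $\U$ to include $\{A,\N\setminus A\}$ for every $A$, which the refinement construction allows since only finitely many colorings are combined at a time and $\U$ is a maximal filter). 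So the good sets to throw into the filter are: for every finite coloring $C$, the set of $C$-monochromatic solution tuples; by PR plus refinement these have FIP, extend to $\U$, take a generator.

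For the converse ``existence of a $\sim$-monochromatic solution $\Rightarrow$ PR'': suppose $\alpha_1\sim\dots\sim\alpha_n$ in $\s\N$ with $E(\alpha_1,\dots,\alpha_n)=0$. Let $C=\{C_1,\dots,C_r\}$ be any finite coloring of $\N$. Since $\alpha_1\in\s\N=\s(C_1\cup\dots\cup C_r)$, transfer gives $\alpha_1\in\s C_j$ for some $j\le r$. By $u$-equivalence applied to the set $C_j\subseteq\N$, we get $\alpha_i\in\s C_j$ for every $i\le n$. Thus $(\alpha_1,\dots,\alpha_n)\in \s(C_j^n)\cap\s Z_n$ where $Z_n=\{\bar a\in\N^n:E(\bar a)=0\}$ — the intersection is nonempty in the nonstandard extension, namely $\s(C_j^n\cap Z_n)\neq\emptyset$, so by transfer $C_j^n\cap Z_n\neq\emptyset$ in $\N^n$; that is, there is a $C$-monochromatic solution. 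Since $C$ was arbitrary, $E=0$ is PR.

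\textbf{Main obstacle.} The only delicate point is the FIP/refinement bookkeeping in the forward direction: one must ensure the ultrafilter $\U$ on $\N^n$ simultaneously ``knows'' that every member contains a monochromatic solution tuple \emph{and} that all coordinates of such tuples agree on membership in each fixed $A\subseteq\N$. Both come out of the same refinement argument (combine the coloring at hand with $\{A,\N\setminus A\}$ before applying PR), but it has to be phrased so that it is a single filter, extended once; alternatively, and more slickly, one can invoke $2^{\aleph_0}$-saturation: the collection of internal sets $\{\bar\alpha\in\s(C^{(k)}_{j})^n : E(\bar\alpha)=0\}$ over all finite colorings $C^{(k)}$ and valid color choices has FIP by the refinement argument, so by saturation there is a common point $(\alpha_1,\dots,\alpha_n)$, and one checks $\alpha_i\sim\alpha_j$ directly from membership in all the two-element colorings $\{A,\N\setminus A\}$. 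Everything else is routine transfer. (This is of course precisely Theorem 2.2.9 of \cite{baglini2012hyperintegers}; the proof above is the standard one, recalled for completeness.)
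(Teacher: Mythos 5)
Your proof is correct, and it is essentially the standard argument: the ``if'' direction is routine transfer plus the definition of $\sim$, and the ``only if'' direction is the FIP-of-monochromatic-solution-sets-plus-saturation construction that the paper itself only cites (Theorem 2.2.9 of \cite{baglini2012hyperintegers}) and then reuses almost verbatim in its proof of Theorem \ref{thm:nscharasympPR}. The only phrasing to tighten is ``over all finite colorings and valid color choices'': fixing a color per coloring can break the FIP, so one should take, for each coloring $C$, the single set of all $C$-monochromatic solution tuples (the union over colors), which is exactly the formulation you give at the end of your forward direction.
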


Theorem \ref{2 Teo fondamentare PR sse esistono soluzioni equiv} allows to reduce the study of the partition regularity of equations to the study of properties of types, which is usually performed using the following well-known properties.

\begin{lemma}\label{2 proprietà generali sim}
    Let $\alpha\in \s\N^{n},\beta\in \s\N^{m}$, and let $f:\N^{n}\rightarrow \N^{m}$. 
    \begin{enumerate}
                \item if $\alpha \sim\beta$ then $f(\alpha)\sim f(\beta)$;
        \item if $\alpha \sim f(\alpha)$ then $\alpha =f(\alpha)$; in particular, if $n=m=1$ and $\alpha\sim\beta$ then $\alpha=\beta$ or $|\alpha-\beta|$ is infinite;
        \item if $f(\alpha)\sim \beta$ then there exists $\gamma\sim \alpha$ such that $\beta=f(\gamma)$;
        \item if $\alpha\sim\beta$ and $\alpha\in \N^{n}$ then $\alpha=\beta$.
    \end{enumerate}
\end{lemma}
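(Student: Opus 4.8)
The plan is to establish the four items in turn, items (1) and (4) by pure transfer, item (2) via a classical colouring lemma for fixed‑point‑free self‑maps, and item (3) by a saturation argument; throughout I follow the usual convention of writing $f(\xi)$ for the nonstandard extension of $f$ evaluated at $\xi\in\s\N^{n}$, and I use freely that $\alpha\sim\beta$ is the same as $\U_{\alpha}=\U_{\beta}$. For (1), fix $B\sub\N^{m}$ and put $A:=f^{-1}(B)\sub\N^{n}$; transferring the sentence $\forall x\,(x\in f^{-1}(B)\sse f(x)\in B)$ gives $f(\xi)\in\s B\sse\xi\in\s A$ for every $\xi\in\s\N^{n}$, so applying this to $\alpha$ and to $\beta$ and using $\alpha\sim\beta$ yields $f(\alpha)\in\s B\sse f(\beta)\in\s B$; since $B$ was arbitrary, $f(\alpha)\sim f(\beta)$. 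For (4), take $A=\{\alpha\}\sub\N^{n}$: as $\alpha$ is standard $\s A=\{\alpha\}$, so $\alpha\in\s A$ forces $\beta\in\s A$, i.e. $\alpha=\beta$.

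For (2), note first that, by transfer, the equality $f(\alpha)=\alpha$ is exactly the statement $\{x\in\N^{n}:f(x)=x\}\in\U_{\alpha}$; so suppose toward a contradiction that $D:=\{x:f(x)\neq x\}\in\U_{\alpha}$. Here I invoke the classical fact that a self‑map without fixed points splits its domain as $D=D_{1}\uplus D_{2}\uplus D_{3}$ with $f[D_{i}]\cap D_{i}=\emptyset$ for each $i$ --- in the infinite case: orienting each edge $x\mapsto f(x)$ makes every finite subgraph of the functional graph a pseudoforest, hence $3$‑colourable, so one concludes by de Bruijn--Erd\H{o}s. Since $\U_{\alpha}$ is an ultrafilter, some $D_{j}\in\U_{\alpha}$, so $\alpha\in\s{D_{j}}$; then $f(\alpha)\in\s{(f[D_{j}])}$ by transfer, whereas $\alpha\sim f(\alpha)$ gives $f(\alpha)\in\s{D_{j}}$ as well, so $f(\alpha)\in\s{D_{j}}\cap\s{(f[D_{j}])}=\s{(D_{j}\cap f[D_{j}])}=\emptyset$, a contradiction; hence $f(\alpha)=\alpha$. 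The ``in particular'' follows at once: if $\alpha\sim\beta$ in $\s\N$ and $|\alpha-\beta|$ is finite, then $\alpha-\beta$ is a standard integer, so up to swapping the two we may write $\beta=\alpha+k$ with $k\geq 0$ an integer; if $k>0$ we apply what was just proved to $f(x)=x+k$ (legitimate, since $f(\alpha)=\beta\sim\alpha$) to obtain $\alpha+k=\alpha$, a contradiction, so $k=0$ and $\alpha=\beta$.

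For (3), I would obtain $\gamma$ from saturation as a common point of the internal family $\mathcal F=\{\,\s A:A\sub\N^{n},\ \alpha\in\s A\,\}\cup\{\,\{\xi\in\s\N^{n}:f(\xi)=\beta\}\,\}$. Any $\gamma\in\bigcap\mathcal F$ lies in $\s A$ for every $A\in\U_{\alpha}$, so $\U_{\alpha}\sub\U_{\gamma}$ and hence $\gamma\sim\alpha$; and the last member of $\mathcal F$ forces $f(\gamma)=\beta$. So it remains to verify the finite intersection property: given $A_{1},\dots,A_{k}$ with $\alpha\in\s A_{i}$, set $A=A_{1}\cap\dots\cap A_{k}$, so $\alpha\in\s A$; then $f(\alpha)\in\s{(f[A])}$, whence $f[A]\in\U_{f(\alpha)}=\U_{\beta}$ (using $f(\alpha)\sim\beta$), i.e. $\beta\in\s{(f[A])}$, which says precisely that some $\xi\in\s A$ has $f(\xi)=\beta$. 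Saturation of $\s\N$, applied to $\mathcal F$ (whose cardinality is within the assumed degree of saturation), then produces the desired $\gamma$.

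I expect the only step with genuine content to be the $3$‑colouring lemma invoked in (2); once it is granted, all four items reduce to transfer together with a routine saturation argument, so that colouring lemma is where any real difficulty lies.
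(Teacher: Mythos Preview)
Your argument is correct. The paper does not actually prove this lemma; it simply writes ``The interested reader can find a proof of the above properties, e.g., in \cite{di2015hypernatural},'' so there is nothing to compare against beyond noting that your route---transfer for (1) and (4), the Kat\v{e}tov three-colouring lemma for (2), and a saturation/FIP argument for (3)---is precisely the standard proof one finds in that reference.

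Two small points of polish. In (2) you call $f$ restricted to $D=\{x:f(x)\neq x\}$ a ``self-map without fixed points'', but $f\restriction D$ need not land in $D$; the clean fix is either to redefine $f$ arbitrarily (but fixed-point-freely) on $\N^{n}\setminus D$ and apply the colouring lemma on all of $\N^{n}$, or simply to observe that your de~Bruijn--Erd\H{o}s argument already works for the graph on $\N^{n}$ with edges $\{x,f(x)\}$ for $x\in D$, then restrict the resulting $3$-colouring to $D$. In (3) the family $\mathcal{F}$ is a family \emph{of} internal sets, not itself an internal family; its cardinality is $2^{\aleph_{0}}$, which is exactly the borderline of the paper's stated saturation hypothesis, but the paper itself invokes saturation on families of the same size (e.g.\ in the proof of Theorem~\ref{thm:nscharasympPR}), so your usage is consistent with its conventions.
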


 The interested reader can find a proof of the above properties, e.g., in \cite{di2015hypernatural}.

\subsection{Archimedean classes of hypernaturals}

In any non-Archimedean setting the following notion, that we specialise here to the $\s\N$ case, arises naturally.

\begin{defn}
    Two numbers $\alpha,\beta \in \s\N$ are in the same Archimedean class if there exist $n_1, n_2\in\N$ such that $\frac{\beta}{n_1}< \alpha < n_2\beta $. In this case, we write $\alpha \asymp \beta$, and define $Arc(\alpha)=\{\beta \in \s\N \mid \beta \asymp \alpha\}$.

    We say that $\alpha \ll \beta$ if for every $n\in \N$, $n\alpha < \beta$.
\end{defn}

Notice that without loss of generality we may suppose $n_1=n_2=n$; it is also easy to check that $\asymp$ is an equivalence relation, and that given $\alpha, \beta$ then $\alpha \asymp \beta$ or $\alpha \ll \beta$ or $\alpha \gg \beta$.

The $\asymp$-equivalence is related to integer logarithms. For the sake of clarity, let us recall here some fundamental properties of integer logarithms that we will often use without mentioning.

\begin{rmrk}\label{basic prop log}
    For $n\in \N$, $n\geq 2$, let $l_{n}:=\lfloor\log_n\rfloor$. The following properties hold:
    \begin{itemize}
        \item for every $x\in \N$, $n^{l_n(x)}\leq x<n^{l_n(x) +1}$;
        \item for every $x,y\in \N $, $l_n(xy)=l_n(x) + l_n(y) + \delta $ where $\delta \in \{0,1\}$; in particular:
        \item for every $x, m \in \N$, $l_n(x^m)= ml_n(x) + \delta$ with $\delta \in \{0,\dots, m-1\}$.
    \end{itemize}
\end{rmrk}

Being in the same $\asymp$-class has also the following nice equivalent formulations; condition (3) of the following Proposition will come particularly handy in some  proofs.

\begin{prop}\label{3 stessa classe sse log a distanza finita}
Let $\alpha,\beta\in \s\N$. The following are equivalent:
\begin{enumerate}
\item $\alpha \asymp \beta$;
\item $\frac{\alpha}{\beta}\in \s \Q$ is finite and not infinitesimal;
\item for all $n\in\N, n>1$, $l_{n}(\alpha) - l_{n}(\beta)\in \Z$;
\item $l_2(\alpha) - l_2(\beta)\in \Z$.
\end{enumerate}
\end{prop}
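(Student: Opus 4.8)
The strategy is to prove a cycle of implications $(1)\Rightarrow(2)\Rightarrow(3)\Rightarrow(4)\Rightarrow(1)$, using the properties of integer logarithms recalled in Remark \ref{basic prop log}. The implication $(3)\Rightarrow(4)$ is trivial, since (3) is a universally quantified statement over all $n>1$ and (4) is its instance at $n=2$. The implication $(1)\Rightarrow(2)$ is essentially the definition unwound through transfer: if $\frac{\beta}{n}<\alpha<n\beta$ for some $n\in\N$, then dividing by $\beta$ gives $\frac1n<\frac{\alpha}{\beta}<n$ in $\s\Q$, so $\frac{\alpha}{\beta}$ is finite (bounded above by $n$) and not infinitesimal (bounded below by $\frac1n$); conversely, if $\frac{\alpha}{\beta}$ is finite and not infinitesimal, pick $n\in\N$ with $\frac1n<\frac{\alpha}{\beta}<n$ and clear denominators. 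Here one should be slightly careful that $\frac{\alpha}{\beta}$ is genuinely an element of $\s\Q$ — this is fine because $\beta\in\s\N$ is a nonzero hypernatural, so division is defined in the ordered field $\s\Q$.

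\textbf{The arithmetic core.} The substantive work is in $(2)\Rightarrow(3)$ and $(4)\Rightarrow(1)$, and both hinge on the same computation: for a hypernatural $\gamma$, the quantity $n^{l_n(\gamma)}$ is, up to a bounded multiplicative factor, equal to $\gamma$ — precisely, $n^{l_n(\gamma)}\le \gamma < n^{l_n(\gamma)+1}$, so $\frac{\gamma}{n^{l_n(\gamma)}}\in[1,n)$. For $(2)\Rightarrow(3)$: assume $\frac{\alpha}{\beta}$ is finite and noninfinitesimal, and suppose toward a contradiction that $l_n(\alpha)-l_n(\beta)$ is infinite, say $l_n(\alpha)-l_n(\beta)=:d$ with $|d|$ infinite. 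Then $\frac{\alpha}{\beta}=\frac{\alpha}{n^{l_n(\alpha)}}\cdot n^{d}\cdot \frac{n^{l_n(\beta)}}{\beta}$, where the first and last factors lie in the finite noninfinitesimal range $[1,n)$ and $(\frac1n,1]$ respectively, while $n^{d}$ is infinite if $d>0$ and infinitesimal if $d<0$ — either way contradicting that $\frac{\alpha}{\beta}$ is finite and noninfinitesimal. Hence $d\in\Z$. For $(4)\Rightarrow(1)$: assume $l_2(\alpha)-l_2(\beta)=d\in\Z$. Writing $\alpha = \frac{\alpha}{2^{l_2(\alpha)}}\,2^{l_2(\alpha)}$ and similarly for $\beta$, we get $\frac{\alpha}{\beta} = \frac{\alpha}{2^{l_2(\alpha)}}\cdot\frac{2^{l_2(\beta)}}{\beta}\cdot 2^{d}$, a product of a factor in $[1,2)$, a factor in $(\frac12,1]$, and the fixed standard rational $2^{d}$; this is bounded above by $2\cdot 1\cdot 2^{|d|}=:n_2$ and below by $1\cdot\frac12\cdot 2^{-|d|}=:\frac1{n_1}$, with $n_1,n_2\in\N$, so $\frac{\beta}{n_1}<\alpha<n_2\beta$ after clearing, giving $\alpha\asymp\beta$.

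\textbf{Anticipated obstacle.} There is no deep obstacle here — the proof is a sequence of elementary estimates — but the one place demanding care is the handling of infinite exponents: $n^{d}$ for $d\in\s\Z\setminus\Z$ must be manipulated via transfer, not naively, and one must argue cleanly that an infinite positive hyperinteger exponent on a base $n\ge 2$ yields an infinite value (e.g.\ $n^{d}\ge n^{m}$ for every standard $m$ since $d>m$, by transfer of monotonicity of $m\mapsto n^m$), and symmetrically that a negative infinite exponent yields an infinitesimal. It is also worth stating once, perhaps as the remark already following the definition suggests, that we may take $n_1=n_2$ in the definition of $\asymp$ (by taking the maximum), which very slightly streamlines the $(4)\Rightarrow(1)$ bookkeeping. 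Finally, one should note that the additive error terms $\delta$ from Remark \ref{basic prop log} do not appear at all in this proof — they are relevant for multiplicativity statements like $l_n(\alpha\beta)$, not for the single-variable comparisons used here — so the argument stays clean.
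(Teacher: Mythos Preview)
Your proposal is correct and follows essentially the same approach as the paper: both arguments use the basic sandwich $n^{l_n(\gamma)}\le\gamma<n^{l_n(\gamma)+1}$ to pass between bounds on $\frac{\alpha}{\beta}$ and finiteness of $l_n(\alpha)-l_n(\beta)$, and your $(2)\Rightarrow(3)$ and $(4)\Rightarrow(1)$ computations are repackagings of the paper's direct estimates. The only structural difference is that the paper closes a three-cycle $(1)\Rightarrow(2)\Rightarrow(3)\Rightarrow(1)$ and then handles $(3)\Leftrightarrow(4)$ separately via the base-change formula for logarithms, whereas you run the full four-cycle and thereby avoid invoking base change at all; neither route is materially shorter or more informative than the other.
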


\begin{proof}
$(1) \Rightarrow (2)$ Suppose $\frac{\beta}{n}< \alpha < n\beta$ for some $n\in \N$. Then $\frac{1}{n } < \frac{\alpha}{\beta}< n$ which means that $\frac{\alpha}{\beta}\in \s\Q$ is finite and not infinitesimal. 

$(2) \Rightarrow (3)$ Suppose $l_n(\alpha) - l_n(\beta)$ is (say, positive) infinite. By the definition of integer logarithm $\alpha \geq n^{l_n(\alpha)}$ and $\beta\leq n^{l_n(\beta) + 1}$. So we get $\frac{\alpha}{\beta}\geq \frac{n^{l_n(\alpha)}}{n^{l_n(\beta) + 1}}=n^{l_n(\alpha) - l_n (\beta) -1}$ which is infinite since the exponent is infinite: so $(2)$ must be false.

$(3)\Rightarrow (1)$ We know that 
\begin{align*}
    n^{l_n(\alpha)}\leq \alpha < n^{l_n(\alpha) +1}, \\
    n^{l_n(\beta)}\leq \beta < n^{l_n(\beta) +1}.
\end{align*}
Suppose $l_n(\alpha ) - l_n(\beta)= k\in \N$. So we can write:
$$\beta\cdot n^{k-1} < n^{l_n(\beta)+1 + k-1}=n^{l_n(\alpha)}\leq  \alpha < n^{l_n(\alpha) +1}=n^{l_n(\beta) + k + 1}\leq \beta \cdot n^{k+1}$$ which means $\alpha \asymp \beta$.

$(3 ) \Leftrightarrow (4)$ It follows immediately from the base change formula for logarithms.
\end{proof}

 Due to the last equivalence in the proposition above, in what follows we will not stress on the base we will choose: in fact, we will usually work in base $2$, and we will indicate integer logarithm in base $2$ with $l$ instead of $l_2$.

\begin{rmrk}
    If $\alpha \sim \beta$ are in the same Archimedean class, Lemma \ref{2 proprietà generali sim}.1 and Proposition \ref{3 stessa classe sse log a distanza finita} ensure that $l_n(\alpha) - l_n (\beta)=0$.
\end{rmrk}

\begin{rmrk}
    We recall here that for every finite $\rho\in\s\R$ there exists a unique $r=st(\rho)\in \R$, called the standard part $st(\rho)$, such that $\rho - r$ is infinitesimal. Point $(2)$ of the Proposition above implies that $\alpha\asymp \beta$ if and only if $st(\frac{\alpha}{\beta})>0$.
\end{rmrk}

Other easy-but-useful properties of $\asymp$ are listed below.

\begin{lemma}\label{3 Lemma robette}

    Let $n\in\N$, $\alpha,\beta,\gamma,\delta \in \s\N$: 
    \begin{enumerate}
        \item if $\alpha \gg \beta$ then $\alpha + \beta \asymp \alpha$;
        \item if $\alpha \asymp \beta $ and $\gamma \asymp \delta $ then $\alpha +\gamma\asymp \beta + \delta $;
        \item if $\alpha \asymp \beta $ and $\gamma \asymp \delta$ then $\alpha \cdot\gamma\asymp \beta \cdot \delta $; in particular, $\alpha \asymp \beta $ if and only if $\alpha \gamma \asymp \beta \gamma$;
        \item $\alpha \asymp \beta $ if and only if $\alpha^n \asymp \beta^n$.
    \end{enumerate}
    
\end{lemma}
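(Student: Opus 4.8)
The plan is to prove each of the four items directly from the equivalent characterizations of $\asymp$ collected in Proposition \ref{3 stessa classe sse log a distanza finita}, and from the basic facts about standard parts and about finite, non-infinitesimal elements of $\s\Q$. Throughout I would work with characterization (2): $\alpha\asymp\beta$ iff $\frac{\alpha}{\beta}$ is finite and non-infinitesimal, equivalently (by the last Remark) iff $st\!\left(\frac{\alpha}{\beta}\right)>0$.

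For item (1), if $\alpha\gg\beta$ then $\frac{\beta}{\alpha}$ is infinitesimal, so $\frac{\alpha+\beta}{\alpha}=1+\frac{\beta}{\alpha}$ has standard part $1>0$; hence $\alpha+\beta\asymp\alpha$. For item (3), writing $\frac{\alpha\gamma}{\beta\delta}=\frac{\alpha}{\beta}\cdot\frac{\gamma}{\delta}$, the product of two finite non-infinitesimal hyperrationals is again finite (product of finites is finite) and non-infinitesimal (if it were infinitesimal, multiplying by the finite quantity $\frac{\beta}{\alpha}$ would show $\frac{\gamma}{\delta}$ infinitesimal, a contradiction); equivalently $st\!\left(\frac{\alpha\gamma}{\beta\delta}\right)=st\!\left(\frac{\alpha}{\beta}\right)st\!\left(\frac{\gamma}{\delta}\right)>0$. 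The ``in particular'' clause is the case $\gamma=\delta$, noting $\frac{\alpha\gamma}{\beta\gamma}=\frac{\alpha}{\beta}$. Item (4) follows from (3) by an obvious induction for one direction ($\alpha\asymp\beta\Rightarrow\alpha^n\asymp\beta^n$); for the converse, if $\alpha^n\asymp\beta^n$ then $st\!\left(\left(\frac{\alpha}{\beta}\right)^n\right)>0$, and since $st$ is multiplicative this is $st\!\left(\frac{\alpha}{\beta}\right)^n>0$, forcing $st\!\left(\frac{\alpha}{\beta}\right)>0$, i.e. $\alpha\asymp\beta$. Alternatively item (4) is immediate from characterization (3): $l_n(\alpha^n)-l_n(\beta^n)=n\bigl(l_n(\alpha)-l_n(\beta)\bigr)+\delta$ with $\delta$ finite (it lies in a finite range by Remark \ref{basic prop log}), so $l_n(\alpha^n)-l_n(\beta^n)\in\Z$ iff $l_n(\alpha)-l_n(\beta)\in\Z$.

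Item (2) is the only one where I would be slightly more careful, since it is not simply multiplicative. The cleanest route is to reduce to the case where one summand dominates or the two are comparable. Concretely: by the trichotomy recalled after the definition of $\asymp$, either $\alpha\asymp\gamma$, or one of $\alpha\gg\gamma$, $\gamma\gg\alpha$ holds. If $\alpha\gg\gamma$, then since $\beta\asymp\alpha\gg\gamma\asymp\delta$ we also get $\beta\gg\delta$, so by item (1) $\alpha+\gamma\asymp\alpha$ and $\beta+\delta\asymp\beta$, and $\alpha\asymp\beta$ gives $\alpha+\gamma\asymp\beta+\delta$ by transitivity; the case $\gamma\gg\alpha$ is symmetric. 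If instead $\alpha\asymp\gamma$, then all four of $\alpha,\beta,\gamma,\delta$ lie in one Archimedean class, and I would compute directly with standard parts: write $\gamma=c\,\alpha$ with $c=\frac{\gamma}{\alpha}$ finite non-infinitesimal and $\delta=d\,\beta$ with $d=\frac{\delta}{\beta}$ finite non-infinitesimal; then $\frac{\alpha+\gamma}{\beta+\delta}=\frac{\alpha(1+c)}{\beta(1+d)}$, whose standard part is $st\!\left(\frac{\alpha}{\beta}\right)\cdot\frac{1+st(c)}{1+st(d)}>0$ because $1+st(c)>0$ and $1+st(d)>0$ (as $st(c),st(d)>0$). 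Hence $\alpha+\gamma\asymp\beta+\delta$ in this case too.

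The main obstacle — if there is one at all — is item (2): one must avoid the temptation to ``add standard parts of ratios'', which is meaningless because the ratios have different denominators, and instead either factor out a common dominating term or split on the trichotomy as above. Everything else is a routine manipulation of finite/infinitesimal hyperreals via multiplicativity of $st$ together with Proposition \ref{3 stessa classe sse log a distanza finita} and Remark \ref{basic prop log}; no saturation or nonstandard machinery beyond the standard part map is needed.
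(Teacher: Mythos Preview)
Your proof is correct and follows essentially the same approach as the paper: item (1) is identical, and for item (2) both you and the paper split on the trichotomy between $\alpha$ and $\gamma$, reducing the dominating cases to item (1) and handling the $\alpha\asymp\gamma$ case by a direct estimate. The only cosmetic difference is that the paper works throughout with the defining inequalities $\frac{\beta}{n}<\alpha<n\beta$ (e.g.\ in (3) and (4) it simply multiplies or raises these to the $n$-th power), whereas you prefer the equivalent standard-part formulation from Proposition~\ref{3 stessa classe sse log a distanza finita}; the two routes are interchangeable and equally short.
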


\begin{proof}
    $(1)$ This is obvious ($st(\frac{\alpha + \beta}{\alpha})=1$).

    $(2)$ It is enough to distinguish the case $\alpha \gg \gamma$ and the case $\alpha \asymp \gamma$: in the first one point $(1)$ above ensures that $\alpha + \gamma \asymp \alpha \asymp \beta \asymp \beta + \delta $; for the second one we know there are $n,m\in \N$ such that $\alpha<n \beta$ and $\gamma < m \delta$: if $N=\max(n,m)$ then $\alpha + \gamma < N (\beta  + \delta)$. The other inequality is proven similarly.

    $(3)$ If $\frac{1}{n}\alpha <\beta <n\alpha$ and $\frac{1}{m}\gamma<\delta <m\gamma$ then $\frac{1}{nm}\alpha\cdot \gamma<\beta\cdot\delta<nm\ \alpha \cdot \gamma$.

    $(4)$ We just observe that  $\frac{1}{k}\alpha <\beta <k\alpha$ if and only if $\frac{1}{k^n}\alpha^n <\beta ^n<k^n\alpha^n$.
\end{proof}

\begin{prop}\label{2 remark sulla Rado condition degli esponenti}
    Let $h,k\in\N$, let $n_{1},\dots,n_{h},m_{1},\dots,m_{k}\in\N$ and, for all $i\leq h,j\leq k$ let $\alpha_i\sim\beta_j$. If $\alpha_1^{n_1}\dots \alpha_h^{n_h}\asymp \beta_1^{m_1}\dots \beta_k^{m_k}$ then $\{n_1,\dots, n_h\}\cup\{-m_{1}, \dots, -m_{k}\}$ satisfies Rado condition, i.e. $\sum_{i\in I}n_{i}=\sum_{j\in J}m_{j}$ for some nonempty $I\subseteq\{1,\dots,h\},J\subseteq\{1\dots,k\}$.
\end{prop}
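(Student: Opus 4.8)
The plan is to transfer everything to integer logarithms and use Proposition~\ref{3 stessa classe sse log a distanza finita}(3) together with the remark that $u$-equivalent numbers in the same Archimedean class have equal integer logarithms. First I would record the key consequence of Remark~\ref{basic prop log}: for any $\gamma\in\s\N$ and any exponent $m\in\N$ (applying transfer to the standard identity), we have $l(\gamma^{m}) = m\,l(\gamma) + \delta$ with $\delta\in\{0,\dots,m-1\}$, hence $l(\gamma^{m})$ and $m\,l(\gamma)$ differ by a \emph{finite} (standard) amount. Iterating this for a product, $l(\alpha_1^{n_1}\cdots\alpha_h^{n_h}) = n_1 l(\alpha_1) + \cdots + n_h l(\alpha_h) + \delta$ for some standard $\delta$, and similarly on the $\beta$ side.

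Next I would use the hypotheses. Fix any $i_0\le h$; since all the $\alpha_i$ and $\beta_j$ are mutually $u$-equivalent to $\alpha_{i_0}$, and since $u$-equivalence is preserved under the standard functions $l(\cdot)$ (Lemma~\ref{2 proprietà generali sim}(1)), all of $l(\alpha_1),\dots,l(\alpha_h),l(\beta_1),\dots,l(\beta_k)$ are $u$-equivalent to the single hypernatural $L:=l(\alpha_{i_0})$. By Lemma~\ref{2 proprietà generali sim}(2) (the case $n=m=1$), any two $u$-equivalent hypernaturals are either equal or differ by an infinite quantity; but $u$-equivalence also implies they lie in the same Archimedean class is \emph{not} automatic, so instead I argue directly: $l(\alpha_i) \sim L$ and $l(\beta_j)\sim L$, and I will only need that $l(\alpha_i) - L$ and $l(\beta_j) - L$ are each either $0$ or infinite. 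Now the hypothesis $\alpha_1^{n_1}\cdots\alpha_h^{n_h}\asymp \beta_1^{m_1}\cdots\beta_k^{m_k}$ gives, via Proposition~\ref{3 stessa classe sse log a distanza finita}(3), that
\[
\Bigl(\sum_{i=1}^{h} n_i\, l(\alpha_i)\Bigr) - \Bigl(\sum_{j=1}^{k} m_j\, l(\beta_j)\Bigr) \in \Z .
\]
Substituting $l(\alpha_i) = L + \varepsilon_i$ and $l(\beta_j) = L + \eta_j$, where each $\varepsilon_i,\eta_j$ is $0$ or infinite, this becomes
\[
\Bigl(\sum_{i=1}^{h} n_i - \sum_{j=1}^{k} m_j\Bigr) L \;+\; \sum_{i=1}^{h} n_i \varepsilon_i \;-\; \sum_{j=1}^{k} m_j \eta_j \;\in\; \Z .
\]

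The remaining point is to deduce $\sum_i n_i = \sum_j m_j$, which (taking $I=\{1,\dots,h\}$, $J=\{1,\dots,k\}$) is exactly Rado's condition as stated. Here I would split into cases on whether all the $\varepsilon_i$ and $\eta_j$ vanish. If they do, then $(\sum n_i - \sum m_j)L\in\Z$; since $L=l(\alpha_{i_0})$ is infinite whenever $\alpha_{i_0}$ is infinite (and if $\alpha_{i_0}$ is finite then all $\alpha_i,\beta_j$ are finite standard numbers, the conclusion is an elementary exercise one can dispose of separately, or one notes the statement is really of interest for infinite generators), an infinite integer multiple of $L$ lands in $\Z$ only if the coefficient $\sum n_i - \sum m_j$ is $0$. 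If some $\varepsilon_i$ or $\eta_j$ is infinite, I would need that the whole expression is still forced to be an integer, which — since $L$ and each nonzero $\varepsilon_i,\eta_j$ are all infinite and mutually $u$-equivalent — again forces the "leading" coefficients to cancel; concretely, grouping terms by Archimedean size and using that $L,\varepsilon_i,\eta_j$ are all $\asymp$ to each other when nonzero (because they are $u$-equivalent images of the same number under $l$, composed possibly with a shift, so they stay in one Archimedean class), the coefficient of the common Archimedean magnitude must be zero, i.e. $\sum_{i:\varepsilon_i\ne 0} n_i - \sum_{j:\eta_j\ne0} m_j = 0$ and also $\sum_i n_i - \sum_j m_j = 0$.

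I expect the genuine obstacle to be exactly this last bookkeeping: controlling the Archimedean sizes of $l(\alpha_i)-L$ and $l(\beta_j)-L$ and ensuring that one cannot have delicate cancellation among infinitely-large but incomparable quantities. The clean way to avoid incomparable magnitudes is to observe that $l(\alpha_i)\sim L$ forces $l(\alpha_i)$ and $L$ to be in the same Archimedean class \emph{unless one of them is infinite relative to the other}, and if that happened one could separate a standard set containing one but not the other, contradicting $u$-equivalence; so in fact $l(\alpha_i)\asymp L$ (when $L$ is infinite), and likewise $l(\beta_j)\asymp L$. With all relevant quantities in the single Archimedean class $\mathrm{Arc}(L)$, dividing the displayed integer by $L$ and taking standard parts turns the condition into a statement about real coefficients, and since the $\varepsilon_i/L,\eta_j/L$ are infinitesimal the standard part of the whole thing is $\mathrm{st}\bigl((\sum n_i - \sum m_j)\bigr)\cdot \mathrm{st}(L/L)=\sum n_i-\sum m_j$ up to an infinitesimal; but an integer divided by an infinite $L$ is infinitesimal, forcing $\sum n_i = \sum m_j$. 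This packages the argument without any case split.
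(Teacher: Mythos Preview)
Your reduction to integer logarithms is correct and matches the paper's first step: from the Archimedean hypothesis and Proposition~\ref{3 stessa classe sse log a distanza finita}(3) one gets
\[
\sum_{i=1}^{h} n_i\, l(\alpha_i) \;-\; \sum_{j=1}^{k} m_j\, l(\beta_j) \;\in\; \Z,
\]
with all the $l(\alpha_i), l(\beta_j)$ mutually $u$-equivalent. The problem is everything after that. You set out to prove the specific equality $\sum_{i=1}^{h} n_i = \sum_{j=1}^{k} m_j$, and this is \emph{false} in general. Take a multiplicative Schur triple $\alpha\beta=\gamma$ with $\alpha\sim\beta\sim\gamma$ infinite: with $h=2$, $n_1=n_2=1$, $k=1$, $m_1=1$ one has $\alpha_1\alpha_2=\beta_1$, hence certainly $\asymp$, yet $\sum n_i=2\neq 1=\sum m_j$. (The proposition still holds with $I=\{1\}$, $J=\{1\}$.) Tracing your argument in this example exposes the two concrete errors. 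First, your claim that $l(\alpha_i)\sim L$ forces $l(\alpha_i)\asymp L$ is wrong: $u$-equivalent hypernaturals need not share an Archimedean class (nothing in Lemma~\ref{2 proprietà generali sim} gives this), and here $l(\gamma)-l(\alpha)=l(\beta)+O(1)$ is infinite even though $l(\gamma)\sim l(\alpha)$. Second, in your ``divide by $L$ and take standard parts'' step, the ratios $\varepsilon_i/L$, $\eta_j/L$ are not infinitesimal in general; in the example $\eta_1/L$ has standard part $1$, so the argument yields $0=0$ rather than $\sum n_i=\sum m_j$.

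What the paper does instead is immediate once you have the displayed relation: the $l(\alpha_i), l(\beta_j)$ form a $u$-equivalent solution of the inhomogeneous linear equation $\sum_i n_i x_i - \sum_j m_j y_j = t$, so by Theorem~\ref{2 Teo fondamentare PR sse esistono soluzioni equiv} this equation is PR, and the inhomogeneous Rado condition forces some nonempty subset of $\{n_1,\dots,n_h,-m_1,\dots,-m_k\}$ to sum to zero. Since all $n_i>0$ and all $-m_j<0$, such a subset must contain at least one of each sign, giving nonempty $I,J$ with $\sum_{i\in I} n_i=\sum_{j\in J} m_j$. The point you are missing is precisely this appeal to Rado: you cannot pin down \emph{which} subsets witness the condition by bare asymptotic bookkeeping, because the Archimedean structure of the $l(\alpha_i)$ relative to one another is not determined by the hypotheses.
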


\begin{proof} By contrast: if $\alpha_1^{n_1}\dots \alpha_h^{n_h}\asymp \beta_1^{m_1}\dots \beta_k^{m_k}$, by taking the logarithms and applying Proposition \ref{3 stessa classe sse log a distanza finita}.(3) we arrive to $\sum_{i=1}^{h}n_{i}l\left(\alpha_{i}\right)=t+\sum_{j=1}^{k}m_{j}l\left(\beta_{j}\right)$ for some $t\in\Z$. By Lemma \ref{2 proprietà generali sim}.(1) $l\left(\alpha_{1}\right)\sim\dots\sim l\left(\beta_{l}\right)$, hence the equation $\sum_{i=1}^{h}n_{i}x_{i}=t+\sum_{j=1}^{k}m_{j}y_{j}$ is PR by Theorem \ref{2 Teo fondamentare PR sse esistono soluzioni equiv}. Our thesis follows as, by the inhomogeneos Rado condition (\cite{Rado}), this PR entails that a sum of the coefficients of the linear equation must be zero. \end{proof}

As said, we will use $\asymp$ to deal with asymptotic PR. Definition \ref{defn:asymPR}, read through the lens of Theorem \ref{2 Teo fondamentare PR sse esistono soluzioni equiv}, allows us to prove Theorem (C) from the Introduction.

\begin{theorem}\label{thm:nscharasympPR}
 Let $P\left(x_{1},\dots,x_{n}\right)\in \Z\left[x_{1},\dots,x_{n}\right]$, and let $I_{1}\uplus\dots \uplus I_{s}=\{1,\dots,n\}$. The following are equivalent:
    \begin{enumerate}
        \item $P(x_1,\dots, x_n)= 0 $ is asymptotically PR in $I_{1},\dots,I_{s}$;
        \item There exists $(\alpha_1,\dots, \alpha_n)\in \s\N^n$ such that:
        \begin{itemize}
        \item[(i)]  for all $i,j\leq n,\  \alpha_i\sim\alpha_j$;
            \item[(ii)] for all $r\leq s$, for all $i,j\in I_{r}$ $\alpha_{i}\asymp \alpha_{j}$;
            \item[(iii)] for all $1\leq r < t \leq s$, for all $i\in I_{r},j\in I_{t}$ $\alpha_{i}\gg\alpha_{j}$;
            \item[(iv)] $P\left(\alpha_{1},\dots,\alpha_{n}\right)=0$.
        \end{itemize}
    \end{enumerate}  
     
\end{theorem}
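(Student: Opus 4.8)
The plan is to prove the two implications separately: $(1)\Rightarrow(2)$ by a saturation argument, and $(2)\Rightarrow(1)$ by transfer. The only non‑formal ingredient is the observation that condition~(i) upgrades $\asymp$ to ``the ratio is infinitesimally close to $1$''; this is what allows the quantitative bound $\bigl|\frac{a_{i}}{a_{j}}-1\bigr|<\frac{1}{N}$ of Definition~\ref{defn:asymPR} to be captured by the coarse relation $\alpha_{i}\asymp\alpha_{j}$.

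For $(1)\Rightarrow(2)$ I would argue by saturation. For a finite colouring $c\colon\N\to\{1,\dots,m\}$ and $N\in\N$, let $S_{c,N}\subseteq\N^{n}$ be the set of tuples $(a_{1},\dots,a_{n})$ that are $c$‑monochromatic, satisfy $P(a_{1},\dots,a_{n})=0$, and satisfy conditions~(1)--(2) of Definition~\ref{defn:asymPR} with respect to $I_{1},\dots,I_{s}$ and $N$. Asymptotic PR says precisely that every $S_{c,N}$ is nonempty, and $S_{c,N}\subseteq S_{c',N'}$ whenever $c$ refines $c'$ and $N\geq N'$; passing to common refinements and maxima shows that the internal sets $\{{}^{*}S_{c,N}\}_{c,N}$ have the finite intersection property, so by saturation there is $(\alpha_{1},\dots,\alpha_{n})\in\bigcap_{c,N}{}^{*}S_{c,N}$. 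Then~(iv) is immediate; for $i\in I_{r}$, $j\in I_{t}$ with $r<t$ we get $N\alpha_{j}<\alpha_{i}$ for every standard $N$, i.e.\ $\alpha_{i}\gg\alpha_{j}$, which is~(iii); for $i,j\in I_{r}$ we get $\bigl|\frac{\alpha_{i}}{\alpha_{j}}-1\bigr|<\frac{1}{N}$ for every standard $N$, hence $\frac{\alpha_{i}}{\alpha_{j}}\approx1$ and in particular $\alpha_{i}\asymp\alpha_{j}$, which is~(ii); and for every $A\subseteq\N$, applying $(\alpha_{1},\dots,\alpha_{n})\in{}^{*}S_{c,1}$ with $c$ the two‑colouring $\{A,\N\setminus A\}$ and using $c$‑monochromaticity forces $\alpha_{i}\in\s A\Leftrightarrow\alpha_{j}\in\s A$, which is~(i).

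The core of $(2)\Rightarrow(1)$ is the claim: \emph{if $\gamma,\delta\in\s\N$ satisfy $\gamma\sim\delta$ and $\gamma\asymp\delta$, then $\frac{\gamma}{\delta}\approx1$}. I would prove it using only facts already recorded. Put $\lambda=st\bigl(\frac{\gamma}{\delta}\bigr)>0$. For every $k\in\N$ we have $\gamma^{k}\sim\delta^{k}$ by Lemma~\ref{2 proprietà generali sim}(1) applied to $x\mapsto x^{k}$, and $\gamma^{k}\asymp\delta^{k}$ by Lemma~\ref{3 Lemma robette}(4); hence by the remark following Proposition~\ref{3 stessa classe sse log a distanza finita} we get $l(\gamma^{k})=l(\delta^{k})$, and in particular $l(\gamma)=l(\delta)$. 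Writing $t_{\gamma}=\log_{2}\gamma-l(\gamma)$ and $t_{\delta}=\log_{2}\delta-l(\delta)$ (both in $[0,1)$) and using $l(\gamma^{k})=\lfloor k\log_{2}\gamma\rfloor=k\,l(\gamma)+\lfloor k\,t_{\gamma}\rfloor$ (cf.\ Remark~\ref{basic prop log}, and similarly for $\delta$), the equalities above give $\lfloor k\,t_{\gamma}\rfloor=\lfloor k\,t_{\delta}\rfloor$ for all $k\in\N$; taking $k$ large, this is impossible unless $st(t_{\gamma})=st(t_{\delta})$. But $t_{\gamma}-t_{\delta}=\log_{2}\bigl(\frac{\gamma}{\delta}\bigr)$ has standard part $\log_{2}\lambda$, while $st(t_{\gamma})-st(t_{\delta})=0$; hence $\log_{2}\lambda=0$, i.e.\ $\lambda=1$. (Alternatively, if $\lambda\neq1$ one can directly exhibit a set $A\subseteq\N$ — a union of multiplicative annuli about the powers $\lambda^{2m}$ — with $\gamma\in\s A\nLeftrightarrow\delta\in\s A$, contradicting $\gamma\sim\delta$.) Granting the claim, $(2)\Rightarrow(1)$ follows by transfer: given $(\alpha_{1},\dots,\alpha_{n})$ as in~(2), a finite colouring $c$ and $N\in\N$, there is a unique $k_{0}$ with $\alpha_{1}\in{}^{*}(c^{-1}(k_{0}))$, and by~(i) all $\alpha_{i}$ lie in ${}^{*}(c^{-1}(k_{0}))$; by~(ii) and the claim, $\bigl|\frac{\alpha_{i}}{\alpha_{j}}-1\bigr|<\frac{1}{N}$ for $i,j$ in the same asymptotic class; by~(iii), $\alpha_{i}>N\alpha_{j}$ whenever $i\in I_{r}$, $j\in I_{t}$, $r<t$; and $P(\alpha_{1},\dots,\alpha_{n})=0$. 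Thus $(\alpha_{1},\dots,\alpha_{n})$ witnesses the nonstandard version of a first‑order statement with standard parameters $P$, $c^{-1}(k_{0})$, $N$ — namely that a tuple in $(c^{-1}(k_{0}))^{n}$ with these properties exists — so by transfer it exists in $\N^{n}$, giving a $c$‑monochromatic solution of $P=0$ satisfying conditions~(1)--(2) of Definition~\ref{defn:asymPR} for $c$ and $N$. Since $c$ and $N$ were arbitrary, $P=0$ is asymptotically PR in $I_{1},\dots,I_{s}$.

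The only step I expect to be genuinely non‑routine is the claim that $\sim$ promotes $\asymp$ to $\frac{\gamma}{\delta}\approx1$; the saturation argument and the transfer step are both standard. That claim is precisely the place where the ``$\frac{1}{N}$'' of Definition~\ref{defn:asymPR} gets reconciled with the purely qualitative relation $\asymp$: without it one would only recover, in each colouring, monochromatic solutions with boundedly‑close (rather than arbitrarily close) coordinates inside each asymptotic class.
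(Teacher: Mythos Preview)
Your proof is correct and follows the same two-step strategy as the paper: saturation for $(1)\Rightarrow(2)$, transfer for $(2)\Rightarrow(1)$. The saturation argument is essentially identical (the paper indexes by pairs $(N,A)$ with $A\subseteq\N$ rather than by pairs $(c,N)$ with $c$ a finite colouring, but the FIP verification and the extraction of (i)--(iv) are the same).

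The one substantive difference is in $(2)\Rightarrow(1)$. The paper simply asserts that for $i,j$ in the same $I_{r}$ the quantity $\bigl|\tfrac{\alpha_{i}}{\alpha_{j}}-1\bigr|$ is infinitesimal, deferring this to a result it records a few lines later as Theorem~\ref{2 stessa classe e sim allora st=1} (quoted from \cite{di2025ramsey}). You instead isolate this as the crux and supply a self-contained proof: from $\gamma^{k}\sim\delta^{k}$ and $\gamma^{k}\asymp\delta^{k}$ you get $l(\gamma^{k})=l(\delta^{k})$ for every standard $k$, rewrite this as $\lfloor k\,t_{\gamma}\rfloor=\lfloor k\,t_{\delta}\rfloor$ with $t_{\gamma},t_{\delta}\in[0,1)$, and deduce $st(t_{\gamma})=st(t_{\delta})$, hence $st(\gamma/\delta)=1$. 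This argument is correct and is a pleasant alternative to the proof in \cite{di2025ramsey}; it buys you independence from that reference at the cost of a short computation. Everything else matches the paper.
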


\begin{proof}
    For the sake of simplicity, let us suppose that $s=2$, the general argument being similar. Also, after rearranging the indices, we may assume that $I_1=\{1,\dots, k\}$ and $I_2=\{k+1,\dots, n\}$.
    
    $(1)\Rightarrow (2)$ For $N\in \N$ and $A\sub \N$, let $\Gamma(N, A)$ be the set 
    $$\{\Vec{x}=(x_1,\dots, x_n )\in \N^n\ \mid P(\Vec{x})=0\ \land \ \ostar_N(\Vec{x}) \ \land \ (\Vec{x}\in A^n\ \lor\ \Vec{x}\in (A^c)^n) \}$$ where $\ostar_N(\Vec{x})$ means that $\Vec{x}=(x_{1},\dots,x_{n})$ satisfies conditions $(1),(2)$ of Definition \ref{defn:asymPR} with $N$ as parameter. 
    It follows from $P$ being asymptotically PR that the family $\{\Gamma(N,A)\mid N\in \N \ , A\sub \N \}$ has the Finite Intersection Property. By saturation, $$\bigcap_{N\in \N, A\sub \N}\s\Gamma(N,A)\not=\emptyset.$$
    If $(\alpha_1,\dots, \alpha_n)$ is in the above intersection then:\begin{itemize}
        \item $\alpha_1\sim\dots\sim\alpha_n$, as for every set $A$, all $\alpha_{i}$'s are either in $\s A$ or in $\s A^c$, hence $(i)$ holds;
        \item $\alpha_1\asymp \dots \asymp \alpha_k\gg\alpha_{k+1}\asymp \dots \asymp \alpha_n$, as condition $\ostar_N$ holds for all $N\in\N$, hence $(ii),(iii)$ holds;
        \item $P\left(\alpha_{1},\dots,\alpha_{n}\right)=0$ by construction, hence $(iv)$ holds.
    \end{itemize}

    $(2)\Rightarrow (1)$ Fix a finite coloring $\N=C_1\uplus\dots \uplus C_r$ and $N\in\N$. We want to find a monochromatic solution $(x_1,\dots, x_n)$ such that:
     \begin{enumerate}
        \item for all $i,j\in I_{l}$ ($l=1,2$) $|\frac{x_i}{x_j}-1|<\frac{1}{N}$;
        \item $i\in I_{1}, j\in I_{2}$ $s_{i}> Ns_{j}$.
    \end{enumerate}
    Let $\alpha_1,\dots, \alpha_n$ be given by the hypothesis $(2)$. In $\s\N$ the following hold: 
    \begin{itemize}
        \item $\exists i\in\{1,\dots, r\}$ such that $(\alpha_1,\dots, \alpha_n)\in \s C_i $ (they are equivalent);
        \item if $i,j\in I_{1}$ or $i,j\in I_{2}$, $\vert \frac{\alpha_i}{\alpha_j}-1\vert$ is infinitesimal: in particular it will be smaller than $\frac{1}{N}$;
        \item if $i\in I_{1}, j\in I_{2}$ then $\frac{\alpha_i}{\alpha_j}$ is infinite: in particular, bigger than $N$;
        \item $P\left(\alpha_{1},\dots,\alpha_{n}\right)=0$.
    \end{itemize}
    We conclude by downward transfer applied to the formula expressing the existence in $C_{i}$ of points satisfying the desired asymptotic relations with bounds $N,\frac{1}{N}$.
\end{proof}

Theorem \ref{thm:nscharasympPR} provides a simple way to handle asymptotics. To prove this claim, we start by providing immediate proofs of Theorem \ref{thmasympclas} and Theorem \ref{thm char linear asymp}.

\begin{proof}[Second proof of Theorem \ref{thmasympclas}] As $P\left(x_{1},\dots,x_{n}\right)=0$ is PR, by Theorem \ref{2 Teo fondamentare PR sse esistono soluzioni equiv} there are $\alpha_{1}\sim\dots\sim\alpha_{n}$ with $P\left(\alpha_{1},\dots,\alpha_{n}\right)=0$. Now divide $\{\alpha_{1},\dots,\alpha_{n}\}$ in $\asymp$-equivalence classes $A_{1}\gg \dots\gg A_{s}$, where $A_{i}\gg A_{j}$ means that for all $\alpha\in A_{i},\beta\in A_{j}$ $\alpha\gg\beta$. For $i\leq s$ let $I_{i}:=\{j\mid \alpha_{j}\in A_{i}\}$. By Theorem \ref{thm:nscharasympPR}, we deduce that $P(x_1,\dots, x_n)= 0 $ is asymptotically PR in $I_{1},\dots,I_{s}$.\end{proof}

\begin{proof}[Second proof of Theorem \ref{thm char linear asymp}] It is well known (see e.g. the original proof of Rado's Theorem in \cite{Rado}) that, given a linear equation satisfying Rado's condition, there is a sufficiently large $n\in\N$ such that the equation admits a solution inside any set of the form $\{a,b,a+ib\mid i\leq n\}$. By Rado's Theorem for linear systems, for fixed $n$ such configurations are PR. In nonstandard terms, this means that there exists $\alpha,\beta\in\s\N$ such that $\alpha\sim\beta\sim\alpha+i\beta$ for all $i\leq n$. We conclude by observing $\{\alpha,\beta,\alpha+i\beta\mid i\leq n\}$ can be split in at most\footnote{Actually, when $n\geq 3$ this construction always give exactly two classes, see \cite[Proposition 5.14]{di2025ramsey}.} two Archimedean classes, one including elements $\asymp$-equivalent to $\alpha$, and one including those $\asymp$-equivalent to $\beta$. \end{proof}

Theorem  \ref{thm:nscharasympPR} tells us that to study asymptotic PR from the nonstandard point of view it is important to understand how $\asymp$ and $\sim$ interact. Some preliminary results in this direction appeared in \cite{di2025ramsey}, where asymptotic relations were used to study the PR of certain infinitary configurations related to algebraic versions of Ramsey's Theorem. In particular, the following results were proven.

\begin{theorem}[\cite{di2025ramsey}, Lemma 5.5.1]\label{2 stessa classe e sim allora st=1}

    If $\alpha \sim \beta $ and $\alpha \asymp \beta$ then $st(\frac{\alpha}{\beta})= 1$. 
    
\end{theorem}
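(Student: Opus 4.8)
The plan is to show $st(\alpha/\beta)=1$ by ruling out the possibility that it equals some rational $r>1$ (the case $r<1$ being symmetric, and $r$ irrational can already be excluded since $\alpha/\beta\in\s\Q$ by Proposition \ref{3 stessa classe sse log a distanza finita}.(2) combined with $\alpha\sim\beta$ and Lemma \ref{2 proprietà generali sim}.(1), which forces the ratio to be a standard rational once it is finite and non-infinitesimal). So assume $st(\alpha/\beta)=r$ with $r\in\Q_{>1}$. The idea is to find a standard set $A\subseteq\N$ such that ``$x$ is close to a multiple-by-roughly-$r$ of some smaller number'' distinguishes $\alpha$ from $\beta$, contradicting $\alpha\sim\beta$. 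Concretely, since $\alpha\asymp\beta$ and $\alpha\sim\beta$ we have $l(\alpha)=l(\beta)$ (the Remark after Proposition \ref{3 stessa classe sse log a distanza finita}), so $\alpha$ and $\beta$ have the same integer logarithm; the discrepancy $r\neq 1$ must therefore live in the ``mantissa''. Define, for each $x\in\N$ with $l(x)=m$, the normalized mantissa $\mu(x):=x/2^{m}\in[1,2)$; then $st(\mu(\alpha))$ and $st(\mu(\beta))$ differ by the factor $r$ (after possibly accounting for a factor of $2$ if $r\mu(\beta)$ crosses a power of $2$ — but this only shifts $l$, contradicting $l(\alpha)=l(\beta)$, so in fact $\mu(\alpha)\approx r\,\mu(\beta)$ honestly and $r\mu(\beta)<2$).

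The key step is then to construct a standard function or set witnessing the difference. I would take a rational $q$ strictly between $1$ and $r$ and consider the set
\[
A=\{x\in\N \mid \mu(x)\geq q\},
\]
i.e. $A=\{x\in\N\mid x\geq q\cdot 2^{l(x)}\}$, which is a genuine subset of $\N$ since $q$ and the map $x\mapsto 2^{l(x)}$ are standard. Since $st(\mu(\beta))=st(\mu(\alpha))/r < st(\mu(\alpha))\le$ (something $<2$), I can choose $q$ so that $st(\mu(\alpha))\ge q > st(\mu(\beta))$ — possible precisely because $st(\mu(\alpha))/st(\mu(\beta))=r>1$, so the open interval $(st(\mu(\beta)),st(\mu(\alpha))]$ is nonempty and contains a rational (care is needed if $st(\mu(\alpha))$ itself equals $q$, but then nudge $q$ down slightly and note $\mu(\alpha)$ is within an infinitesimal of its standard part so the non-strict inequality transfers). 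Then $\alpha\in\s A$ while $\beta\notin\s A$, contradicting $\alpha\sim\beta$ via Definition \ref{1 def sim}.

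The main obstacle I anticipate is the boundary bookkeeping around powers of $2$: one must be sure that $l(\alpha)=l(\beta)$ genuinely holds (this is where $\alpha\asymp\beta$ together with $\alpha\sim\beta$ is essential, and it is recorded in the Remark following Proposition \ref{3 stessa classe sse log a distanza finita}), and that multiplying the mantissa of $\beta$ by $r$ does not silently carry into the next binade — if it did, $\alpha$ and $\beta$ would land in different binades, i.e. $l(\alpha)\ne l(\beta)$, again a contradiction. Once these edge cases are dispatched, the argument is just: equal integer logarithm plus a mantissa ratio bounded away from $1$ yields a standard threshold set separating the two hypernaturals, which is impossible under $u$-equivalence. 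A cleaner packaging, avoiding mantissas, is to work directly with the function $f(x)=\lfloor q\cdot 2^{l(x)}\rfloor$ and the set $\{x\mid x\ge f(x)\}$, or to invoke Proposition \ref{3 stessa classe sse log a distanza finita}.(2) to get $\alpha/\beta\in\s\Q$ finite non-infinitesimal, write $\alpha/\beta$ as a standard rational plus an infinitesimal using Lemma \ref{2 proprietà generali sim}.(1) applied to the map $(x,y)\mapsto$ (continued-fraction data of $x/y$), and then separate by a set detecting that standard rational; I expect the mantissa-threshold version to be the shortest to write out.
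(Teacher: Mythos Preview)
The paper does not give its own proof of this theorem; it is quoted as \cite[Lemma~5.5.1]{di2025ramsey}. Your mantissa-threshold argument is nonetheless correct in substance, and it is in fact the same idea the paper uses a few lines later to prove the $\sim$-invariance of the standard head: apply the standard map $F(x)=x/2^{l(x)}$, use Lemma~\ref{2 proprietà generali sim}.(1) to get $F(\alpha)\sim F(\beta)$, observe that $u$-equivalent bounded hyperreals have the same standard part (your threshold-set argument is precisely this), and conclude via $l(\alpha)=l(\beta)$ that $\alpha/\beta=F(\alpha)/F(\beta)$ has standard part $1$.

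One step in your write-up is wrong, though it is also unnecessary. You claim that irrational $r$ can be excluded a priori because ``$\alpha/\beta\in\s\Q$ \dots\ forces the ratio to be a standard rational once it is finite and non-infinitesimal.'' That is false: a finite non-infinitesimal hyperrational can perfectly well have irrational standard part (e.g.\ any nonstandard continued-fraction convergent to $\sqrt{2}$). Your main argument, however, never uses this: once $l(\alpha)=l(\beta)$ (the Remark after Proposition~\ref{3 stessa classe sse log a distanza finita}) and $r=st(\mu(\alpha))/st(\mu(\beta))>1$, any rational $q$ strictly between $st(\mu(\beta))$ and $st(\mu(\alpha))$ yields a standard set $A=\{x\in\N\mid x\geq q\cdot 2^{l(x)}\}$ separating $\alpha$ from $\beta$, regardless of whether $r$ is rational. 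So simply delete the parenthetical about excluding irrational $r$, and the proof is clean. Your worries about ``crossing a binade'' are also already handled: if that occurred one would have $l(\alpha)\neq l(\beta)$, which you have ruled out.
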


\begin{coroll}[\cite{di2025ramsey}, Lemma 5.5.2]\label{3 corollario x^n asymp y^m allora n=m}

    If $\alpha \sim \beta$ and $\alpha^n \asymp \beta^m$ then $n = m$ and $\alpha \asymp \beta$.
    
\end{coroll}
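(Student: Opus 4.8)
\textbf{Proof plan for Corollary \ref{3 corollario x^n asymp y^m allora n=m}.}

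The plan is to deduce everything from the two logarithmic tools already in hand: Proposition \ref{3 stessa classe sse log a distanza finita}, which translates $\asymp$ into "integer logarithms at finite distance", and Theorem \ref{2 stessa classe e sim allora st=1}, which upgrades $\sim$ together with $\asymp$ to $st(\tfrac{\alpha}{\beta})=1$. First I would note that $\alpha\sim\beta$ implies $\alpha^n\sim\beta^n$ (Lemma \ref{2 proprietà generali sim}.1), so in particular $\alpha^n\asymp\beta^n$ by Theorem \ref{2 stessa classe e sim allora st=1} (indeed $st((\alpha/\beta)^n)=1$). Hence from the hypothesis $\alpha^n\asymp\beta^m$ and transitivity of $\asymp$ we get $\beta^n\asymp\beta^m$, equivalently $\beta^{n-m}\asymp 1$ (say $n\ge m$, using Lemma \ref{3 Lemma robette}.3 to cancel $\beta^m$). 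But $\beta$ is a hypernatural; if $n>m$ then $\beta^{n-m}\asymp 1$ forces $\beta$ to be finite, hence $\beta\in\N$, and then $\alpha\sim\beta$ gives $\alpha=\beta\in\N$ by Lemma \ref{2 proprietà generali sim}.4 — a degenerate case one can either exclude by convention or handle directly (if $\alpha=\beta\in\N$ then $\alpha^n\asymp\beta^m$ holds for all $n,m$, so the statement should be read under the standing assumption that $\alpha,\beta$ are infinite, as is customary in this circle of ideas). Assuming $\beta$ infinite, we conclude $n-m=0$, i.e. $n=m$.

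Alternatively, and perhaps more cleanly, I would argue directly with integer logarithms: by Proposition \ref{3 stessa classe sse log a distanza finita}.(3), $\alpha^n\asymp\beta^m$ means $l(\alpha^n)-l(\beta^m)\in\Z$, and by Remark \ref{basic prop log} this is $n\,l(\alpha)-m\,l(\beta)+\delta$ with $\delta\in\Z$; so $n\,l(\alpha)-m\,l(\beta)\in\Z$. Since $\alpha\sim\beta$, Lemma \ref{2 proprietà generali sim}.1 gives $l(\alpha)\sim l(\beta)$, and then by Theorem \ref{2 Teo fondamentare PR sse esistono soluzioni equiv} the linear equation $nx-my=t$ (for the relevant $t\in\Z$) is PR; by the inhomogeneous Rado condition a nonempty subsum of $\{n,-m\}$ must vanish, and the only possibility is $n-m=0$. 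Once $n=m$ is established, $\alpha\asymp\beta$ follows immediately from $\alpha^n\asymp\beta^n$ via Lemma \ref{3 Lemma robette}.4.

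The only genuinely delicate point is the degenerate case where $\alpha=\beta$ is a standard natural number, in which $\alpha^n\asymp\beta^m$ trivially holds for every $n,m$; this must be excluded, either by the blanket hypothesis that $\alpha,\beta$ are infinite or by noting it is consistent with the conclusion $\alpha\asymp\beta$ but not with $n=m$, so the statement is really about infinite $\alpha,\beta$. Everything else is a routine application of the logarithmic dictionary, so I expect the proof to be only a few lines; the main conceptual content is already packaged in Theorem \ref{2 stessa classe e sim allora st=1}.
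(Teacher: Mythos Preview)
Your first approach contains a genuine error. You write that $\alpha\sim\beta$ implies $\alpha^n\sim\beta^n$, ``so in particular $\alpha^n\asymp\beta^n$ by Theorem \ref{2 stessa classe e sim allora st=1}''. But Theorem \ref{2 stessa classe e sim allora st=1} has $\asymp$ as a \emph{hypothesis}, not a conclusion: it says that \emph{if} $\alpha\sim\beta$ \emph{and} $\alpha\asymp\beta$, then $st(\alpha/\beta)=1$. It does not let you pass from $\sim$ to $\asymp$. And indeed $\sim$ does not imply $\asymp$: already for Schur's equation one has $\alpha\sim\beta\sim\alpha+\beta$ with $\alpha\gg\beta$ (see Example \ref{ex semplici lineari}). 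So the step ``hence $\beta^n\asymp\beta^m$'' collapses, and the rest of that argument does not go through.

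Your second approach is correct and is essentially the paper's argument: both start from $n\,l(\alpha)-m\,l(\beta)\in\Z$ and $l(\alpha)\sim l(\beta)$. The difference is only in the final step. The paper observes that the linear relation forces $l(\alpha)\asymp l(\beta)$, then applies Theorem \ref{2 stessa classe e sim allora st=1} to get $st(l(\alpha)/l(\beta))=1$, and dividing $n\,l(\alpha)=m\,l(\beta)+d$ by $l(\beta)$ and taking standard parts yields $n=m$ directly. You instead invoke Theorem \ref{2 Teo fondamentare PR sse esistono soluzioni equiv} to say $nx-my=t$ is PR and then appeal to the inhomogeneous Rado condition. Both are fine; the paper's route is slightly more self-contained (it stays within the $\asymp$/standard-part toolkit without calling on Rado), while your route is exactly the mechanism the paper packages later as Proposition \ref{2 remark sulla Rado condition degli esponenti}. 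Your remark on the degenerate finite case is also apt: the statement is tacitly about infinite $\alpha,\beta$.
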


We take this opportunity to provide a simplified proof of the above Corollary.

\begin{proof}[Proof of Corollary \ref{3 corollario x^n asymp y^m allora n=m}.] By \ref{3 stessa classe sse log a distanza finita}.(3), by applying $l$ to $\alpha^{n}\asymp\beta^{n}$ we get $$l(\alpha^n )= l(\beta^m) + c$$ for some $c\in\Z$, hence $nl(\alpha)=ml(\beta)+d$ for some $d\in\Z$. Notice that, in particular, $l(\alpha )\asymp l(\beta)$; since $l(\alpha)\sim l(\beta)$, Theorem \ref{2 stessa classe e sim allora st=1} implies that $n=m$ . We conclude by Lemma \ref{3 Lemma robette}.(4). \end{proof}

\begin{rmrk}\label{3 rmrk 2 variabili}
\begin{enumerate}
\item Corollary \ref{3 corollario x^n asymp y^m allora n=m} holds, more in general, when $P(\alpha)\asymp Q(\beta)$: $P(\alpha)$ is in the same Archimedean class of its leading term, and the same holds for $Q(\beta)$, hence it must be $\deg(P)=\deg(Q) $.
\item Corollary \ref{3 corollario x^n asymp y^m allora n=m} generalizes trivially also to monomials\footnote{In all this paper, by monomial we mean a primitive monomial, i.e. we always assume the coefficients of the monomials to be equal to 1.} consisting of exactly two variables: in fact, if $\alpha ^n \beta^m\asymp \alpha ^{n'}\beta^{m'}$ then, clearly, either $n=n^{\prime}, m=m^{\prime}$ or (without loss of generality) $n>n^{\prime}, m<m^{\prime}$. In the latter case, by Lemma \ref{3 Lemma robette}.(3) we deduce that $\alpha^{n-n^{\prime}} \asymp \beta^{m^{\prime}-m}$, and Corollary \ref{3 corollario x^n asymp y^m allora n=m} tells us that $n+m=n' + m'$. This is a special case of Proposition \ref{2 remark sulla Rado condition degli esponenti}.

    \end{enumerate}
\end{rmrk}

\begin{rmrk}\label{1 rmrk cose tornano in R}
    The same result is trivially true in $\s\R$, since the proof of Lemma 5.5.1 in \cite{di2025ramsey} works, mutatis mutandis.
\end{rmrk}

In its apparent simplicity, Theorem \ref{2 stessa classe e sim allora st=1} was elemental in \cite{di2025ramsey} to characterize the so-called Ramsey PR of several configurations. Here we want to strengthen it, and Corollary \ref{3 corollario x^n asymp y^m allora n=m} as well, to monomials involving multiple variables. To do so, we will need some preliminary facts and definitions.

\begin{defn}
    We define the \textbf{standard }head (in base $p$) of $\alpha$ as $st\left(\frac{\alpha}{p^{l_p(\alpha)}}\right)$, and denote it as $sh_p(\alpha)$. For the sake of easiness we will sometimes omit the subscript $p$. 
\end{defn}

E.g., if $\alpha=2^\beta$ then its standard head in base 2 is $1$. In general, write $\alpha $ in base $p$ and put a comma after the first digit: the standard part of the obtained number will be the standard head. It follows from the definition of $st$ that the standard head $sh_p(\alpha)$ is the unique $r\in \R$ such that $\alpha=(r + \varepsilon)p^{l_p(\alpha)}$ for $\varepsilon\in \s\R$ infinitesimal.

%\begin{rmrk} When the arguments are independent on the fixed base $p$, we will use the simplified notation $sh(\alpha)$ to denote the standard head of $\alpha$.    
%\end{rmrk} 

\begin{defn}
    If $\alpha\in \s\N$ is infinite, we say that $\beta$ is an approximation of $\alpha $ if $\vert\alpha - \beta\vert \ll \alpha$. 
\end{defn}

\begin{rmrk}\label{1 rmrk le teste approssimano il numero}
    $sh_p(\alpha)p^{l_p(\alpha)}$ and $\lfloor sh_p(\alpha)p^{l_p(\alpha)} \rfloor$ are approximations of $\alpha$.
\end{rmrk}

\begin{prop}
    If $\beta_1,\beta_2$ are approximations of $\alpha_1,\alpha_2$ respectively, then $\beta_1\beta_2$ is an approximation of $\alpha_1\alpha_2$.
\end{prop}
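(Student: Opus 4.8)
The plan is to reduce everything to the elementary identity
\[
\alpha_1\alpha_2 - \beta_1\beta_2 \;=\; \alpha_1(\alpha_2 - \beta_2) \;+\; \beta_2(\alpha_1 - \beta_1),
\]
and then to bound each of the two summands separately against $\alpha_1\alpha_2$. First I would record the basic consequences of the hypothesis: by definition of approximation $\alpha_1,\alpha_2$ are infinite and $|\alpha_i-\beta_i|\ll\alpha_i$ for $i=1,2$; the latter says exactly that $|\alpha_i-\beta_i|/\alpha_i$ is smaller than $1/n$ for every $n\in\N$, hence infinitesimal, so $st(\beta_i/\alpha_i)=1$. In particular $\beta_1,\beta_2$ are positive and infinite, and $\beta_i\asymp\alpha_i$; moreover $\alpha_1\alpha_2$ is infinite, so it does make sense to ask whether $\beta_1\beta_2$ is an approximation of it.

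Next I would handle the two summands. For the first one: fix $n\in\N$; from $|\alpha_2-\beta_2|\ll\alpha_2$ we get $n|\alpha_2-\beta_2|<\alpha_2$, and multiplying through by the positive number $\alpha_1$ yields $n\,\alpha_1|\alpha_2-\beta_2|<\alpha_1\alpha_2$; as $n$ was arbitrary, $\alpha_1|\alpha_2-\beta_2|\ll\alpha_1\alpha_2$. For the second one: since $\beta_2\asymp\alpha_2$ there is $m\in\N$ with $\beta_2<m\alpha_2$; fixing $n\in\N$ and using $nm\,|\alpha_1-\beta_1|<\alpha_1$ (which holds because $|\alpha_1-\beta_1|\ll\alpha_1$), we obtain $n\,\beta_2|\alpha_1-\beta_1|<nm\,\alpha_2|\alpha_1-\beta_1|<\alpha_1\alpha_2$, so $\beta_2|\alpha_1-\beta_1|\ll\alpha_1\alpha_2$.

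Finally I would combine the two bounds via the triangle inequality applied to the displayed identity, together with the observation that a sum of two quantities each $\ll\gamma$ is itself $\ll\gamma$ (if $2na<\gamma$ and $2nb<\gamma$ then $n(a+b)<\gamma$). Taking $\gamma=\alpha_1\alpha_2$ this gives
\[
|\alpha_1\alpha_2-\beta_1\beta_2|\;\le\;\alpha_1|\alpha_2-\beta_2|+\beta_2|\alpha_1-\beta_1|\;\ll\;\alpha_1\alpha_2,
\]
which is precisely the statement that $\beta_1\beta_2$ is an approximation of $\alpha_1\alpha_2$. There is no real obstacle here; the only points needing a moment's care are that one must split off the cross term $\alpha_1\beta_2$ before estimating (a naive bound on $\alpha_1(\alpha_2-\beta_2)+\beta_1(\alpha_1-\beta_1)$ does not go through), and that the factor $\beta_2$ in front of $|\alpha_1-\beta_1|$ has to be absorbed using $\beta_2\asymp\alpha_2$, i.e. Lemma \ref{3 Lemma robette} applied to the approximation.
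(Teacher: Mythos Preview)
Your proof is correct and follows essentially the same approach as the paper: decompose $\alpha_1\alpha_2-\beta_1\beta_2$ algebraically and bound each summand using $|\varepsilon_i|\ll\alpha_i$ together with $\beta_i\asymp\alpha_i$. The only cosmetic difference is that the paper writes $\alpha_i=\beta_i+\varepsilon_i$ and expands to the three-term form $\beta_1\varepsilon_2+\varepsilon_1\beta_2+\varepsilon_1\varepsilon_2$, comparing against $\beta_1\beta_2$, whereas you telescope to two terms and compare against $\alpha_1\alpha_2$; since $\beta_1\beta_2\asymp\alpha_1\alpha_2$ these are equivalent, and your version is in fact spelled out more carefully.
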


\begin{proof}
    Write $\alpha_i=\beta_i + \varepsilon_i$ with $\vert\varepsilon_i\vert\ll \alpha_i$. Then $$\alpha_1\alpha_2 - \beta_1\beta_2 = \beta_1\varepsilon_2 + \varepsilon_1 \beta_2 + \varepsilon_1\varepsilon_2\ll\beta_{1}\beta_{2},$$
    as $\beta_1\varepsilon_2, \varepsilon_1 \beta_2, \varepsilon_1\varepsilon_2\ll\beta_{1}\beta_{2}$.
\end{proof}

\begin{coroll}\label{2 Prendere approssimazioni non cambia i monomi}
    Let $\beta_1,\dots, \beta_n$ be approximations of $\alpha_1, \dots, \alpha_n$ respectively. If $M(x_1,\dots, x_n)$ is a monomial, then $M(\beta_1, \dots, \beta_n)$ approximates $ M(\alpha_1,\dots, \alpha_n)$.
\end{coroll}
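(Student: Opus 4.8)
The plan is to prove this by induction on the degree $d$ of the monomial $M$, using the Proposition just established (that the product of two approximations approximates the product of the approximated numbers) as the inductive engine. Recall that, by our convention, $M$ is primitive, so $M(x_1,\dots,x_n)=x_{i_1}\cdots x_{i_d}$ for some not necessarily distinct indices $i_1,\dots,i_d\le n$; consequently $M(\alpha_1,\dots,\alpha_n)=\alpha_{i_1}\cdots\alpha_{i_d}$ and $M(\beta_1,\dots,\beta_n)=\beta_{i_1}\cdots\beta_{i_d}$.

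For the base case $d=1$ there is nothing to do: $M(\beta_1,\dots,\beta_n)=\beta_{i_1}$ is an approximation of $\alpha_{i_1}=M(\alpha_1,\dots,\alpha_n)$ by hypothesis. For the inductive step, factor $M=M'\cdot x_{i_d}$ with $M'$ a monomial of degree $d-1$. By the inductive hypothesis, $M'(\beta_1,\dots,\beta_n)$ approximates $M'(\alpha_1,\dots,\alpha_n)$; by hypothesis, $\beta_{i_d}$ approximates $\alpha_{i_d}$. Applying the preceding Proposition to these two pairs yields that $M'(\beta_1,\dots,\beta_n)\,\beta_{i_d}=M(\beta_1,\dots,\beta_n)$ approximates $M'(\alpha_1,\dots,\alpha_n)\,\alpha_{i_d}=M(\alpha_1,\dots,\alpha_n)$, closing the induction. (Equivalently, one simply iterates the Proposition $d-1$ times on the partial products; the inductive phrasing is just a tidy way to bookkeep this.)

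I do not expect a genuine obstacle here: the statement is a formal consequence of the Proposition. The only point worth a (trivial) remark is that the notion of approximation is only defined for infinite hypernaturals, so one should note that every $\alpha_i$ occurring in $M$ is infinite — being approximated by $\beta_i$ — and that a product of infinite hypernaturals is infinite, so each partial product $M'(\alpha_1,\dots,\alpha_n)$ is infinite and the Proposition genuinely applies at every stage.
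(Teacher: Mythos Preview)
Your proof is correct and is exactly the argument the paper has in mind: the Corollary is stated without proof there, as an immediate consequence of the preceding Proposition, and your induction on the degree (equivalently, iterating the Proposition $d-1$ times) is the intended justification. Your side remark about infiniteness of the partial products is a fair sanity check, though the paper does not make it explicit.
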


\begin{rmrk}
    This is not true in general for polynomials, as cancellations may occur when multiple monomials are involved: i.e, take $P(x,y):=x-y$ and evaluate it in $(\alpha+\varepsilon,\alpha)$ (for $0\ll\varepsilon\ll\alpha$) and $(\alpha,\alpha)$.
\end{rmrk}

A major feature of the standard head is that it is invariant via $\sim$-equivalence. 

\begin{prop}
    If $\alpha\sim \beta$ then $sh(\alpha)=sh(\beta)$.
\end{prop}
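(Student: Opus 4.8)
The plan is to reduce the statement to a property of integer logarithms, where the invariance of $\sim$ under standard functions (Lemma \ref{2 proprietà generali sim}.(1)) does the work. Recall that $sh_p(\alpha)=st\!\left(\alpha/p^{l_p(\alpha)}\right)$, so the statement $sh(\alpha)=sh(\beta)$ is really about the pair $\left(\alpha,l_p(\alpha)\right)$ versus $\left(\beta,l_p(\beta)\right)$. First I would observe that $\alpha\sim\beta$ forces $l_p(\alpha)\sim l_p(\beta)$ by Lemma \ref{2 proprietà generali sim}.(1), since $x\mapsto l_p(x)=\lfloor\log_p x\rfloor$ is a function $\N\to\N$; more importantly, applying that lemma to the function $x\mapsto\left(x,l_p(x)\right)$ gives $\left(\alpha,l_p(\alpha)\right)\sim\left(\beta,l_p(\beta)\right)$ in $\s\N^2$.

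Next I would transfer the bound $n^{l_p(x)}\le x<n^{l_p(x)+1}$ (Remark \ref{basic prop log}), which holds for all standard $x$, to conclude that $1\le \alpha/p^{l_p(\alpha)}<p$ and likewise for $\beta$; in particular both ratios are finite, so their standard parts $sh_p(\alpha)$ and $sh_p(\beta)$ exist. To compare them, fix a rational $q\in(1,p)$ — or more carefully, any rational $q$ — and consider the set $A_q=\{(x,m)\in\N^2\mid x\le q\,p^{m}\}$ (rescaling to clear denominators makes this an honest subset of $\N^2$). Then $\left(\alpha,l_p(\alpha)\right)\in\s A_q \Leftrightarrow \alpha\le q\,p^{l_p(\alpha)} \Leftrightarrow \alpha/p^{l_p(\alpha)}\le q$, and the same for $\beta$; since $\left(\alpha,l_p(\alpha)\right)\sim\left(\beta,l_p(\beta)\right)$, the two membership facts agree. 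Thus $\alpha/p^{l_p(\alpha)}$ and $\beta/p^{l_p(\beta)}$ lie below exactly the same rationals, hence have the same standard part, i.e. $sh(\alpha)=sh(\beta)$.

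Alternatively, and perhaps more cleanly, one can avoid the two-variable coding: by Proposition \ref{3 stessa classe sse log a distanza finita} together with the Remark following it, $\alpha\sim\beta$ implies $l_p(\alpha)-l_p(\beta)=0$, so $l_p(\alpha)=l_p(\beta)=:\lambda$. Then $sh_p(\alpha)=st(\alpha/p^{\lambda})$ and $sh_p(\beta)=st(\beta/p^{\lambda})$, and since $\alpha\sim\beta$ implies $\alpha/p^\lambda\sim\beta/p^\lambda$ (division by the fixed hypernatural $p^\lambda$, with $\lambda$ now a value shared by both — one applies Lemma \ref{2 proprietà generali sim}.(1) to the map $(x,m)\mapsto x/p^{m}$ reading off a rational), equal $\sim$-class forces equal standard part: if they differed, some rational would separate them and witness a set distinguishing $\alpha$ from $\beta$.

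The only real subtlety — the step I expect to need the most care — is the bookkeeping that turns inequalities involving the hyperrational $\alpha/p^{l_p(\alpha)}$ into membership in a genuine subset of $\N$ or $\N^2$, so that the definition of $\sim$ applies verbatim; once the sets $A_q$ are set up with denominators cleared, everything else is a direct transfer argument. No genuinely new idea beyond Lemma \ref{2 proprietà generali sim}.(1) and the already-established fact that $\sim$-equivalent hypernaturals share the same integer logarithm value is required.
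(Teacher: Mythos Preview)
Your first approach is correct and matches the paper's proof in spirit: the paper simply packages your two-step argument into the single standard function $F:\N\to[1,p)$, $F(x)=x/p^{l_p(x)}$, applies Lemma \ref{2 proprietà generali sim}.(1) to get $F(\alpha)\sim F(\beta)$, and then notes that $\sim$-equivalent finite hyperreals have equal standard part. Your detour through $\N^2$ and the sets $A_q$ is not wrong, just unnecessarily elaborate; the single function $F$ already encodes both coordinates.

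Your second ``alternative'' approach, however, contains a genuine error. You write that ``$\alpha\sim\beta$ implies $l_p(\alpha)-l_p(\beta)=0$'' by the Remark following Proposition \ref{3 stessa classe sse log a distanza finita}, but that Remark explicitly assumes in addition that $\alpha\asymp\beta$. From $\alpha\sim\beta$ alone one only gets $l_p(\alpha)\sim l_p(\beta)$, not equality: for instance, if $\U$ is an additively idempotent ultrafilter and $(\alpha,\beta)$ is a tensor pair of generators of $\U$, then $\alpha\sim\alpha+\beta$ while $\alpha\ll\alpha+\beta$, so $l_p(\alpha)\neq l_p(\alpha+\beta)$. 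Consequently the shared value $\lambda$ you want to divide by need not exist, and the argument collapses. The fix is exactly what your first approach (and the paper) does: treat $l_p(\cdot)$ as part of a single standard function of $x$, rather than trying to pin down its value first.
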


\begin{proof}
  We just have to observe that, if $F:\N\to [1,p)$ is the function that maps $x$ to $\frac{x}{p^{l_p(x)}}$, then $F(\alpha)\sim F(\beta)$ by Lemma \ref{2 proprietà generali sim}.(1). As $F(\alpha), F(\beta)$ are $\sim$-equivalent finite hyperreals, they have same standard part. We conclude as, by construction, $sh_p(\alpha)=st(F(\alpha))$. \end{proof}

\begin{prop}
    Let $\alpha\leq\beta$ in $\s\R$. Then:
    \begin{itemize}
        \item If $\alpha\asymp \beta$, then $sh(\alpha + \beta) = \left(sh(\beta ) + p^{l_p(\alpha) - l_p(\beta)} sh(\alpha)\right)p^{a}$, where $a\in\{0,1\}$;
        \item If $\alpha\ll \beta$ then $sh(\alpha + \beta) = sh(\beta)$;
        \item $sh(\alpha\beta) =p^{a}sh(\alpha) sh(\beta),$ where $a\in\{0,1\}$.
    \end{itemize}
    
\end{prop}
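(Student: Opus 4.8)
The plan is to verify each of the three formulas directly from the characterization $sh_p(\gamma) = st(\gamma / p^{l_p(\gamma)})$, using the basic properties of $l_p$ recorded in Remark \ref{basic prop log} and the approximation machinery just established. Throughout I would write $l = l_p$ for brevity and repeatedly use the fact that $sh_p(\gamma) p^{l_p(\gamma)}$ approximates $\gamma$, i.e. $\gamma = (sh_p(\gamma) + \varepsilon_\gamma) p^{l(\gamma)}$ with $\varepsilon_\gamma$ infinitesimal.

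For the third bullet, $sh(\alpha\beta) = p^a sh(\alpha) sh(\beta)$ with $a \in \{0,1\}$: by Remark \ref{basic prop log}, $l(\alpha\beta) = l(\alpha) + l(\beta) + \delta$ with $\delta \in \{0,1\}$; set $a = \delta$. Then
\[
\frac{\alpha\beta}{p^{l(\alpha\beta)}} = \frac{\alpha}{p^{l(\alpha)}}\cdot\frac{\beta}{p^{l(\beta)}}\cdot p^{-\delta},
\]
and taking standard parts (all three factors are finite) gives $sh(\alpha\beta) = p^{-a} sh(\alpha) sh(\beta)$ — so in fact I should be careful about the sign of the exponent; reconciling it with the stated $p^{+a}$ just means writing $l(\alpha) + l(\beta) = l(\alpha\beta) - \delta$ and the formula reads $sh(\alpha\beta) = p^{\delta} \cdot st(\alpha\beta / p^{l(\alpha)+l(\beta)})$, which after expanding is $p^{-\delta} sh(\alpha) sh(\beta)$; I would simply adopt whichever convention matches, noting $sh(\alpha), sh(\beta) \in [1,p)$ so their product lies in $[1, p^2)$ and the correcting power of $p$ brings it back into $[1,p)$. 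The second bullet, the case $\alpha \ll \beta$: here $l(\alpha + \beta) = l(\beta)$ because $\beta \le \alpha + \beta < 2\beta \le p\,\beta$ (using $p \ge 2$) forces the integer logarithm not to change, and $(\alpha+\beta)/p^{l(\beta)} = \beta/p^{l(\beta)} + \alpha/p^{l(\beta)}$ where the second term is infinitesimal since $\alpha \ll \beta \le p^{l(\beta)+1}$; hence $st$ of the sum equals $st(\beta/p^{l(\beta)}) = sh(\beta)$.

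The first bullet is the main one and the only place a genuine obstacle appears. When $\alpha \asymp \beta$ with $\alpha \le \beta$, write $k := l(\beta) - l(\alpha) \in \N$ (finite and $\ge 0$ by Proposition \ref{3 stessa classe sse log a distanza finita}.(3), since $\alpha \asymp \beta$), so
\[
\alpha + \beta = \big(sh(\beta) + \varepsilon_\beta\big)p^{l(\beta)} + \big(sh(\alpha) + \varepsilon_\alpha\big)p^{l(\alpha)} = \Big(sh(\beta) + p^{-k}sh(\alpha) + \eta\Big)p^{l(\beta)}
\]
with $\eta$ infinitesimal. The quantity $c := sh(\beta) + p^{-k} sh(\alpha)$ lies in $[1, p + 1) \subseteq [1, p^2)$, so $l(\alpha+\beta) \in \{l(\beta), l(\beta)+1\}$; set $a \in \{0,1\}$ to be $l(\alpha+\beta) - l(\beta)$. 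Then $sh(\alpha+\beta) = st\big((\alpha+\beta)/p^{l(\alpha+\beta)}\big) = c\, p^{-a} = \big(sh(\beta) + p^{l(\alpha)-l(\beta)} sh(\alpha)\big) p^{-a}$, matching the stated formula up to the sign convention on $a$ (I would align the exponent's sign with the statement, noting that the statement's $p^{a}$ presumably means the normalizing factor written multiplicatively on the head side rather than divisively on the power side). The one subtlety to flag: when the "exact" value $c$ equals $p$ on the nose — which can happen only if $sh(\beta) = 1$, $k = 0$, $sh(\alpha) = p - 1$... actually $sh(\beta) + sh(\alpha) = p$ with both in $[1,p)$ is possible — the integer logarithm of $\alpha + \beta$ is determined not by $c$ alone but by the infinitesimal correction $\eta$, i.e. by whether $c + \eta \ge p$ or $< p$; either way $st$ of the normalized quantity is $1 = p \cdot p^{-1}$, so the formula still holds with $a = 1$, but this boundary case is worth a sentence. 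This is exactly the kind of "$\delta \in \{0,1\}$ ambiguity" that Remark \ref{basic prop log} already prepares us for, so no new ideas are needed — just care at the carry.
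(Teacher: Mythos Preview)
Your argument is correct and is exactly what the paper does --- write $\alpha=(sh(\alpha)+\varepsilon_1)p^{l_p(\alpha)}$ and $\beta=(sh(\beta)+\varepsilon_2)p^{l_p(\beta)}$, then expand using Remark~\ref{basic prop log} --- though the paper compresses all three bullets into a single sentence. Your observation about the sign of the exponent is sharp: the computation indeed produces $p^{-a}$ rather than $p^{a}$ (equivalently $a\in\{0,-1\}$), so the stated range for $a$ in the Proposition is a minor slip, not a flaw in your reasoning.
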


\begin{proof}
    The claims follow easily by writing $\alpha=(sh(\alpha) + \varepsilon_1)p^{l_p(\alpha)}$, $\beta=(sh(\beta) + \varepsilon_2)p^{l_p(\beta)}$ and using Remark \ref{basic prop log}. 
\end{proof}

The standard head $sh(\alpha)$ plays a fundamental role in understanding which polynomials can be PR and which cannot. It is a concept implicitly introduced in \cite[Lemma 3.11]{di2018ramsey}, and related to the maximal Rado condition introduced in \cite[Definition 2.16]{BarrettLupiniMoreira}. Let us give a couple of examples.

\begin{example} Let us consider the equation $xy=z$, which is partition regular by the multiplicative version of Schur's Theorem. Fix the base $p$, and let $\alpha\sim\beta\sim\gamma$ with $\alpha\beta=\gamma$. As $sh(\alpha)=sh(\beta)=sh(\gamma)$ and $sh(\alpha\beta)=p^{a}sh(\alpha)sh(\beta)$ for $a\in\{0,1\}$, we deduce that necessarily $sh(\alpha)\in\{1,p\}$. Namely, $\sim$-equivalent solutions to $xy=z$ must be very close to powers of $p$ for all finite $p$. Explicitly, this means that their base $p$ expansions must have either the form:
\begin{itemize}
    \item a 1 followed by an infinite amount of $0$'s (that could be eventually followed by other digits), or
    \item an infinite list of $(p-1)$'s (that could again be eventually followed by other digits).
\end{itemize}\end{example}

\begin{example} Let us now consider the equation $xy=2z$. This equation is PR (see e.g. \cite[Example 2.17]{di2018ramsey}). Fix the base $p>2$, and let $\alpha\sim\beta\sim\gamma$ with $\alpha\beta=2\gamma$. As $sh(\alpha)=sh(\beta)=sh(\gamma)$, $sh(\alpha\beta)=p^{a}sh(\alpha)sh(\beta)$ for $a\in\{0,1\}$, and $sh(2\gamma)=2sh(\gamma)p^{b}$, for $b\in\{0,1\}$, we deduce that necessarily $sh(\alpha)=2p^{b-a}$, and we could argue similarly as in the previous example regarding the digits of $\alpha$.\end{example}

The examples above tells us how to compute $sh$ in the case of equalities between monomials. This can be extended to $\asymp$ as follows.

\begin{theorem}\label{2 teorema parti standard monomi archimedei} 
Let $\alpha_{1}\sim\dots\sim\alpha_{n}$, let $p\in\N, p\geq 2$ and let $\sigma$ be the standard head of $\alpha_{1},\dots,\alpha_{n}$ in base $p$. Let $M_{1}\left(x_{1},\dots,x_{n}\right),\dots,M_{k}\left(x_{1},\dots,x_{n}\right)$ be monomials, with $M_{i}=\prod_{j=1}^{n}x_{j}^{d_{i,j}}$, and let $d_{i}=\sum_{j=1}^{n}d_{i,j}=deg\left(M_{i}\right)$ for all $i\leq k$. Assume that, for all $i,j\leq k$, $M_{i}\left(\alpha_{1},\dots,\alpha_{n}\right)\asymp M_{j}\left(\alpha_{1},\dots,\alpha_{n}\right)$. 

Then there exists $m\in\Z$ such that:

\begin{enumerate}
        \item for all $i,j\leq n$, $\sum_{k=1}^{n} \left( d_{i,k}-d_{j,k})l_{p}(\alpha_{k}\right)=\left(d_{i}-d_{j}\right)m$;
        \item for all $i,j\leq n$ $st\left(\frac{M_{i}\left(\alpha_{1},\dots,\alpha_{n}\right)}{M_{j}\left(\alpha_{1},\dots,\alpha_{n}\right)}\right)=\left(p^{m}\sigma\right)^{d_{i}-d_{j}}$.
\end{enumerate}
\end{theorem}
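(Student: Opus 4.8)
The plan is to reduce everything to the behaviour of integer logarithms and standard heads under the hypothesis $M_i(\alpha)\asymp M_j(\alpha)$ for all $i,j$. First I would apply Proposition~\ref{3 stessa classe sse log a distanza finita}.(3): from $M_i(\alpha)\asymp M_j(\alpha)$ we get $l_p\!\left(M_i(\alpha)\right)-l_p\!\left(M_j(\alpha)\right)\in\Z$. Using Remark~\ref{basic prop log} (the additivity of $l_p$ on products, up to a bounded integer error) one has $l_p\!\left(M_i(\alpha)\right)=\sum_{k=1}^n d_{i,k}\,l_p(\alpha_k)+\delta_i$ with $\delta_i\in\Z$ bounded, so that
\begin{equation*}
\sum_{k=1}^n\left(d_{i,k}-d_{j,k}\right)l_p(\alpha_k)\in\Z\qquad\text{for all }i,j\le k.
\end{equation*}
Since all $\alpha_k$ are $\sim$-equivalent, Lemma~\ref{2 proprietà generali sim}.(1) gives $l_p(\alpha_1)\sim\dots\sim l_p(\alpha_n)$; writing $\lambda_k:=l_p(\alpha_k)$, the finitely many standard integer linear combinations $\sum_k(d_{i,k}-d_{j,k})\lambda_k$ all lie in $\Z$. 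The key algebraic step is then to deduce from this the existence of a single $m\in\Z$ with $\sum_k\left(d_{i,k}-d_{j,k}\right)\lambda_k=(d_i-d_j)m$ for every pair $i,j$. This is exactly the "homogeneity/maximal Rado"-type phenomenon: pick any index, say fix $j$, and consider the vectors $v_i:=(d_{i,1}-d_{j,1},\dots,d_{i,n}-d_{j,n})$ together with $d_i-d_j$. Because $\lambda_1\sim\dots\sim\lambda_n$, evaluating the linear form $\sum_k(\,\cdot\,)\lambda_k$ and knowing it is always an integer forces — via Theorem~\ref{2 Teo fondamentare PR sse esistono soluzioni equiv} and inhomogeneous Rado exactly as in Proposition~\ref{2 remark sulla Rado condition degli esponenti} — that consistently $\sum_k(d_{i,k}-d_{j,k})\lambda_k$ equals $(d_i-d_j)$ times a fixed quantity; calling this quantity's appropriate integer value $m$ (and verifying it does not depend on which pair one started with, using transitivity of $\asymp$ and additivity) gives part (1).

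For part (2), I would compute the standard part of the ratio directly. Write each $\alpha_k=(\sigma+\varepsilon_k)p^{\lambda_k}$ with $\varepsilon_k$ infinitesimal (this is the defining property of $\sigma=sh_p(\alpha_k)$, which is the \emph{same} for all $k$ by Proposition "If $\alpha\sim\beta$ then $sh(\alpha)=sh(\beta)$"). Then by Corollary~\ref{2 Prendere approssimazioni non cambia i monomi}, or directly by expanding the product, $M_i(\alpha)=\bigl(\sigma^{d_i}+\eta_i\bigr)p^{\sum_k d_{i,k}\lambda_k}$ with $\eta_i$ infinitesimal. Hence
\begin{equation*}
\frac{M_i(\alpha)}{M_j(\alpha)}=\frac{\sigma^{d_i}+\eta_i}{\sigma^{d_j}+\eta_j}\;p^{\sum_k(d_{i,k}-d_{j,k})\lambda_k}=\frac{\sigma^{d_i}+\eta_i}{\sigma^{d_j}+\eta_j}\;p^{(d_i-d_j)m},
\end{equation*}
using part (1) in the last equality. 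Since the ratio is finite and non-infinitesimal (that is what $M_i(\alpha)\asymp M_j(\alpha)$ means, via Proposition~\ref{3 stessa classe sse log a distanza finita}.(2)), taking standard parts yields $st\!\left(M_i(\alpha)/M_j(\alpha)\right)=\sigma^{d_i-d_j}p^{(d_i-d_j)m}=\left(p^m\sigma\right)^{d_i-d_j}$, which is part (2).

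\textbf{Main obstacle.} The routine parts are the logarithm bookkeeping and the standard-head expansion; the genuine content is the passage, in part (1), from "$\sum_k(d_{i,k}-d_{j,k})\lambda_k\in\Z$ for all $i,j$, with $\lambda_1\sim\dots\sim\lambda_n$" to "there is a single integer $m$ with $\sum_k(d_{i,k}-d_{j,k})\lambda_k=(d_i-d_j)m$". The subtlety is that the $\lambda_k$ are infinite hypernaturals, so one cannot simply solve a linear system over $\Z$; one must exploit that the $\lambda_k$ are all $\sim$-equivalent — hence any integer-valued standard linear form in them corresponds, by Theorem~\ref{2 Teo fondamentare PR sse esistono soluzioni equiv}, to a partition-regular linear equation, and Rado's condition then pins down the coefficients. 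I expect this to require care to make uniform across all pairs $(i,j)$ simultaneously (rather than pair by pair), and to confirm that the same $m$ works even when $d_i=d_j$ (where the statement is vacuous on the right but part (1) then asserts the linear form vanishes, which must be checked separately, again from $\sim$-equivalence of the $\lambda_k$ and additivity of $l_p$).
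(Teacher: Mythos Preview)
Your approach is correct and essentially the same as the paper's. The uniformity concern you flag as the main obstacle is resolved exactly by treating all the equations $\sum_h(d_{i,h}-d_{j,h})x_h=k_{i,j}$ (over all pairs $i<j$, with the specific integers $k_{i,j}$ you have computed) as a \emph{single} inhomogeneous linear system solved by the $\sim$-equivalent tuple $(\lambda_1,\dots,\lambda_n)$: Theorem~\ref{2 Teo fondamentare PR sse esistono soluzioni equiv} then makes the whole system PR, and Rado's theorem for inhomogeneous \emph{systems} yields a single constant solution $m\in\Z$ working for every pair simultaneously (and automatically forcing $k_{i,j}=0$ whenever $d_i=d_j$).
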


\begin{proof}
    $(1)$ Consider any two monomials, say $M_1$ and $M_2$. Since they are in the same Archimedean class, when we apply $l_p$ we find:
    $$\sum_{j=1}^n d_{1,j} l_{p}(\alpha_j)=\sum_{j=1}^nd_{2,j}l_{p}(\alpha_j) + k_{1,2},$$
which forces $\sum_{j=1}^n (d_{1,j} - d_{2,j})l_{p}(\alpha_j)=k_{1,2}$.   So, we find that $l_{p}(\alpha_1), \dots, l_{p}(\alpha_n)$ solve the linear system
    \begin{equation}\label{2 Sistema}
        \begin{cases}
            \sum_{h=1}^{n} \left(d_{1,h}-d_{2,h}\right)x_{h}=k_{1,2}\\
            \vdots \\
            \sum_{h=1}^n(d_{i,h} - d_{j,h})x_h=k_{i,j}\ \ (i<j)\\
            \vdots \\
            \sum_{j=1}^{n} \left(d_{n-1,h}-d_{n,h}\right)x_{h}=k_{n-1,n}
        \end{cases}
    \end{equation}
    Because of Theorem \ref{2 Teo fondamentare PR sse esistono soluzioni equiv}, System (\ref{2 Sistema}) is PR; from the inhomogeneous Rado's theorem (\cite{Rado}), this is possible if and only if it has a constant solution, and the associated homogeneous system is PR. In particular there exists $m\in \Z$ such that for every $i,j\leq n$  $\sum_{h=1}^n (d_{i, h} - d_{j,h})m= k_{i,j}$. Hence
       $$ (d_i - d_j)m=k_{i,j}=\sum_{h=1}^n(d_{i,h} - d_{j,h})l_{p}(\alpha_h).$$

    $(2)$ By applying Corollary \ref{2 Prendere approssimazioni non cambia i monomi} and Remark \ref{1 rmrk le teste approssimano il numero}, we can substitute $\sigma\cdot p^{l_p(\alpha_i)} $ to $\alpha_i$. Then: 
    $$st\left(\frac{M_{i}(\alpha_1,\dots, \alpha_n)}{M_{j} (\alpha_1,\dots,\alpha_n)}\right)= st\left(\frac{M_i(\sigma p^{l_p(\alpha_1)}, \dots, \sigma p^{l_p(\alpha_n)})}{M_j(\sigma p^{l_p(\alpha_1)}, \dots, \sigma p^{l_p(\alpha_n)})}\right).$$
    An easy computation shows that the right hand side above is exactly $(p^m\sigma )^{d_i - d_j}$. 
\end{proof}

\begin{rmrk}\label{2 Remark monomi omogenei} In the notations of Theorem \ref{2 teorema parti standard monomi archimedei}, when the monomials $M_{i}$ are homogeneous we get that $st\left(\frac{M_{i}}{M_{j}}\right)=1$, which has  Theorem \ref{2 stessa classe e sim allora st=1} as a particular case.    
\end{rmrk}

The case of monomials being in the same Archimedean class arises when working with partition regular equations. In such a case, we obtain the following result.

\begin{prop}\label{1 Teorema sulle teste}
    Let $t\in\N$ and, for $i\leq t$, let $M_{i}\left(x_{1},\dots,x_{n}\right)$ be a monomial. Suppose that $P\left(x_1,\dots,x_n\right):=\sum_{i=1}^{t} M_{i}\left(x_{1},\dots,x_{n}\right)=0$ is PR, and let $\alpha_1\sim \dots \sim \alpha_n\in\s\N$ be such that $P\left(\alpha_{1},\dots,\alpha_{n}\right)=0$. Let $\sigma=sh_p(\alpha_{1})$. Assume that $M_1(\alpha_1,\dots, \alpha_n)\asymp \dots \asymp M_k(\alpha_1,\dots, \alpha_n)$ are the largest monomials with respect to the Archimedean equivalence, and define $$P_{max}(x)=\sum_{i=1}^k M_i(x,\dots, x).$$
    Then there exists $m\in \Z$ such that $P_{max}(p^m\sigma)=0$. 
\end{prop}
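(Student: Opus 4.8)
The plan is to read $P(\alpha_1,\dots,\alpha_n)=0$ as an identity among the standard heads of the \emph{maximal} monomials, and then to recognise that identity as $P_{max}(p^m\sigma)=0$ via Theorem \ref{2 teorema parti standard monomi archimedei}. Write $\bar\alpha=(\alpha_1,\dots,\alpha_n)$ and $d_i=\deg M_i$; since $\alpha_1\sim\dots\sim\alpha_n$, the standard head is common to all of them, $\sigma=sh_p(\alpha_1)=\dots=sh_p(\alpha_n)$, by the invariance of $sh_p$ under $\sim$ established above. We may assume $M_1(\bar\alpha)$ is infinite, since otherwise every $M_i(\bar\alpha)$ is finite, $P$ is a constant polynomial identically $0$, and there is nothing to prove.

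First I would dispose of the non-maximal monomials. Dividing $P(\bar\alpha)=0$ by $M_1(\bar\alpha)$ yields
\[
\sum_{i=1}^k \frac{M_i(\bar\alpha)}{M_1(\bar\alpha)}\;=\;-\sum_{i=k+1}^t \frac{M_i(\bar\alpha)}{M_1(\bar\alpha)} .
\]
Each summand on the left is a finite hyperreal by Proposition \ref{3 stessa classe sse log a distanza finita}.(2), while each summand on the right is infinitesimal, since $M_i(\bar\alpha)\ll M_1(\bar\alpha)$ for $i>k$ by maximality; being a finite sum of infinitesimals, the right-hand side is infinitesimal, so the left-hand side has standard part $0$.

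Next, since $M_1(\bar\alpha)\asymp\dots\asymp M_k(\bar\alpha)$, Theorem \ref{2 teorema parti standard monomi archimedei} applied to $M_1,\dots,M_k$ produces $m\in\Z$ with $st\!\left(M_i(\bar\alpha)/M_j(\bar\alpha)\right)=(p^m\sigma)^{d_i-d_j}$ for all $i,j\le k$. Using that $st$ is a ring homomorphism on the finite hyperreals and combining with the previous step,
\[
0\;=\;st\!\left(\sum_{i=1}^k \frac{M_i(\bar\alpha)}{M_1(\bar\alpha)}\right)\;=\;\sum_{i=1}^k st\!\left(\frac{M_i(\bar\alpha)}{M_1(\bar\alpha)}\right)\;=\;\sum_{i=1}^k (p^m\sigma)^{d_i-d_1} .
\]
Multiplying through by the nonzero real $(p^m\sigma)^{d_1}$ gives $\sum_{i=1}^k (p^m\sigma)^{d_i}=0$, which, because $M_i(x,\dots,x)=x^{d_i}$, is exactly $P_{max}(p^m\sigma)=0$.

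I do not anticipate a real obstacle, as the arithmetic is all contained in Theorem \ref{2 teorema parti standard monomi archimedei}; the points to watch are (i) that one must \emph{not} claim $\sum_{i=1}^k M_i(\bar\alpha)\asymp M_1(\bar\alpha)$ — this sum is typically not Archimedean-maximal, precisely because the leading terms cancel — and instead work with the standard part of its quotient by $M_1(\bar\alpha)$; and (ii) that if the $M_i$ carry nonzero integer coefficients $c_i$ rather than being primitive, one runs the same argument on the primitive parts and carries the $c_i$ through linearly, ending with $\sum_{i=1}^k c_i(p^m\sigma)^{d_i}=0$, i.e. again $P_{max}(p^m\sigma)=0$.
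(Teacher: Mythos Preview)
Your proof is correct and follows exactly the approach of the paper: divide $P(\bar\alpha)=0$ by $M_1(\bar\alpha)$, take standard parts so that the non-maximal monomials vanish, and invoke Theorem \ref{2 teorema parti standard monomi archimedei} to identify the remaining ratios as $(p^m\sigma)^{d_i-d_1}$. The paper's proof is a one-line sketch of precisely this computation; your version simply spells out the steps and the caveats (i) and (ii) explicitly.
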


\begin{proof} By Theorem \ref{2 teorema parti standard monomi archimedei}, divide $P\left(\alpha_{1},\dots,\alpha_{n}\right)=0$ by $M_1(\alpha_1,\dots, \alpha_n)$ and take the standard part. This gives the conclusion.\end{proof}

\begin{coroll}
    If $\alpha$ is such that there exists a nonzero polynomial $P\left(x_{1},\dots,x_{n}\right)$ such that the equation $P\left(x_{1},\dots,x_{n}\right)=0$ can be solved by nonstandard points $\sim$-equivalent to $\alpha$, then $sh(\alpha)$ must be algebraic.
\end{coroll}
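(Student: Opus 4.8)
The plan is to plug the given $\sim$-solution into Proposition~\ref{1 Teorema sulle teste} and read off algebraicity of the standard head from the one-variable equation it yields.

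First I would dispose of the case $\alpha\in\N$: there $sh_p(\alpha)=\alpha/p^{l_p(\alpha)}\in\Q$ is already algebraic, so assume $\alpha$ infinite. Pick $\alpha_1,\dots,\alpha_n$, all $\sim$-equivalent to $\alpha$, with $P(\alpha_1,\dots,\alpha_n)=0$, where $P\in\Z[x_1,\dots,x_n]$ is the given nonzero polynomial, and fix a base $p\geq 2$; by $\sim$-invariance of the standard head, all the $\alpha_i$ have the common standard head $\sigma:=sh_p(\alpha)$. Since $P=0$ is solved by $\sim$-equivalent hypernaturals, Theorem~\ref{2 Teo fondamentare PR sse esistono soluzioni equiv} gives that $P(x_1,\dots,x_n)=0$ is PR, so Proposition~\ref{1 Teorema sulle teste} applies to $\alpha_1,\dots,\alpha_n$: writing $P=\sum_{i=1}^t M_i$ as a sum of integer-coefficient monomials and letting $M_1,\dots,M_k$ be those whose value at $(\alpha_1,\dots,\alpha_n)$ lies in the largest Archimedean class, there is $m\in\Z$ with $P_{max}(p^m\sigma)=0$, where $P_{max}(x)=\sum_{i=1}^k M_i(x,\dots,x)\in\Z[x]$.

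The conclusion then follows whenever $P_{max}$ is not identically zero: $p^m\sigma$ is then a root of a nonzero integer polynomial, hence algebraic over $\Q$, and so is $\sigma=p^{-m}\cdot(p^m\sigma)$; as $p$ was arbitrary, $sh_p(\alpha)$ is algebraic in every base. I expect the main obstacle to be exactly the step $P_{max}\not\equiv 0$: it can fail only when the maximal monomials cancel degree by degree --- e.g.\ $P=x_1x_2-x_3x_4$ evaluated at a solution with $x_1=x_4$ and $x_2=x_3$ --- so the statement should be understood with the proviso that this degeneracy does not occur (it is automatic when the maximal monomials have pairwise distinct degrees, and holds for the concrete families considered in the paper). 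There is no clean way around this, since in the degenerate case $P_{max}(\alpha_1,\dots,\alpha_n)$ is itself typically nonzero, so one cannot simply recurse on the next Archimedean layer; granting $P_{max}\not\equiv 0$, the proof is complete.
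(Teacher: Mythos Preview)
Your approach is exactly the paper's intended one: the corollary is stated immediately after Proposition~\ref{1 Teorema sulle teste} with no separate proof, so it is meant to be read off from $P_{max}(p^m\sigma)=0$. You have also correctly identified a genuine gap in the \emph{statement}, not just in your argument: as literally written the corollary is false, since for any infinite $\alpha$ (in particular one with transcendental head, as in the paper's own remark that $sh_{10}(2^\beta)=\pi$) the nonzero polynomial $P(x_1,x_2)=x_1-x_2$ is solved by $\alpha_1=\alpha_2=\alpha$, and here $P_{max}(x)=x-x\equiv 0$. The paper does not address this degeneracy; the natural reading, which you already isolated, is to add the hypothesis that the one-variable polynomial $P_{max}$ arising from the given $\sim$-solution is not identically zero. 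Under that proviso your proof is complete and coincides with the paper's.
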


\begin{coroll} It $\sum_{i=1}^{n}c_{i}x_{i}=0$ is asymptotically PR in $I_{1},\dots,I_{s}$, then $\sum_{i\in I_{1}} c_{i}=0$.\end{coroll}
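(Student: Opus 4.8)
The plan is to run everything through the nonstandard characterization. First I would apply Theorem \ref{thm:nscharasympPR} to the hypothesis: since $\sum_{i=1}^{n}c_{i}x_{i}=0$ is asymptotically PR in $I_{1},\dots,I_{s}$, there exist hypernaturals $\alpha_{1},\dots,\alpha_{n}\in\s\N$ with $\alpha_{1}\sim\dots\sim\alpha_{n}$, with $\alpha_{i}\asymp\alpha_{j}$ whenever $i,j$ lie in the same class $I_{r}$, with $\alpha_{i}\gg\alpha_{j}$ whenever $i\in I_{r}$, $j\in I_{t}$ and $r<t$, and with $\sum_{i=1}^{n}c_{i}\alpha_{i}=0$. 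Recall that, by the ordering convention in Definition \ref{defn:asymPR} and condition (iii) of Theorem \ref{thm:nscharasympPR}, $I_{1}$ is the class of the largest variables, so every $\alpha_{i}$ with $i\in I_{1}$ dominates (in the sense of $\gg$) every $\alpha_{j}$ with $j\notin I_{1}$.

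The decisive step is to normalize the equation $\sum_{i=1}^{n}c_{i}\alpha_{i}=0$ by a representative of the top Archimedean class and take standard parts. Fix any $i_{0}\in I_{1}$ and divide by $\alpha_{i_{0}}$, obtaining $\sum_{i=1}^{n}c_{i}\,\frac{\alpha_{i}}{\alpha_{i_{0}}}=0$. For $i\in I_{1}$ the ratio $\frac{\alpha_{i}}{\alpha_{i_{0}}}$ is finite and non-infinitesimal because $\alpha_{i}\asymp\alpha_{i_{0}}$, and since moreover $\alpha_{i}\sim\alpha_{i_{0}}$, Theorem \ref{2 stessa classe e sim allora st=1} forces $st\!\left(\frac{\alpha_{i}}{\alpha_{i_{0}}}\right)=1$. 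For $i\notin I_{1}$ we have $\alpha_{i_{0}}\gg\alpha_{i}$, so $\frac{\alpha_{i}}{\alpha_{i_{0}}}$ is infinitesimal and $st\!\left(\frac{\alpha_{i}}{\alpha_{i_{0}}}\right)=0$. Taking standard parts of the normalized equation and using that the $c_{i}$ are standard integers gives $0=\sum_{i=1}^{n}c_{i}\,st\!\left(\frac{\alpha_{i}}{\alpha_{i_{0}}}\right)=\sum_{i\in I_{1}}c_{i}$, which is the claim.

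There is essentially no obstacle here: the only points to verify are that dividing by $\alpha_{i_{0}}$ is legitimate (it is, since $\alpha_{i_{0}}\geq 1$) and that each ratio being taken has a well-defined standard part (it does, each such ratio being finite). One can alternatively read this statement as the linear instance of Proposition \ref{1 Teorema sulle teste}: there $P_{max}$ is the sum of the leading (top Archimedean class) monomials, which here are exactly the $c_{i}x_{i}$ with $i\in I_{1}$, and the conclusion $P_{max}(p^{m}\sigma)=0$ together with $p^{m}\sigma\neq 0$ again yields $\sum_{i\in I_{1}}c_{i}=0$; but the self-contained argument above via Theorem \ref{2 stessa classe e sim allora st=1} is shorter and avoids the primitive-monomial bookkeeping.
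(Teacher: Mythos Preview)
Your argument is correct and is exactly the approach the paper takes: this corollary is the linear instance of Proposition \ref{1 Teorema sulle teste}, and your direct computation (normalize by a top-class $\alpha_{i_0}$, take standard parts, invoke Theorem \ref{2 stessa classe e sim allora st=1}) is precisely the proof the paper gives for the necessity direction of Rado's Theorem \ref{2 Rado}. You even identify both routes yourself, so there is nothing to add.
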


Notice that Proposition \ref{1 Teorema sulle teste} can be seen as a (slightly more explicit) reformulation of Proposition 3.1 in \cite{BarrettLupiniMoreira}; this should not come as a surprise, as Rado functionals, introduced in \cite{BarrettLupiniMoreira} and developed further in \cite{LBARadoFunctionals}, share with asymptotic PR the idea of partitioning solutions to equations in terms of some notion of largeness, which leads to a good control on the ``large pieces'' of the equations.

We conclude this section by highlighting some general facts about standard heads.

\begin{rmrk}
    The standard head $sh_p(\alpha)$ depends highly on $p$. For example, it follows from Dirichlet's approximation theorem that a power of $2$ can start with any fixed finite sequence of digits in base $10$. By transfer, there exists $\alpha \in \s\N$ such that $2^\alpha$ and $\pi$ have the first $N$ digits equal in base $10$, for $N$ infinite. In particular $sh_{10}(2^\alpha)=\pi$  while $sh_2(2^\alpha)=1$.
\end{rmrk}

\begin{rmrk} In contrast with the Remark \ref{2 Remark monomi omogenei}, it is not true in general that Archimedean equivalence between monomials forces their ratio to have standard part equal to $1$. In fact, for every real number $r\in (0,+\infty)$ and for every $n\in\N$ there exist $\alpha\sim\beta\sim \gamma\in \s\N$ such that $st(\frac{\alpha\beta^n}{\gamma})=r$.

This can be shown as follows: by the multiplicative version of Rado's Theorem (and of van der Waerden's Theorem as well) and by Theorem \ref{2 Teo fondamentare PR sse esistono soluzioni equiv} there exist $\eta\sim\mu\sim\xi$ such that $\eta\mu^n=\xi$. Let $s=\sqrt[n]{r}$ and 
    \begin{align*}
        \alpha= \lfloor s\cdot\eta \rfloor,\ \
        \beta=  \lfloor s\cdot\mu \rfloor, \ \
        \gamma= \lfloor s\cdot\xi \rfloor.
    \end{align*}

    Since $\alpha,\beta,\gamma$ are the image of equivalent numbers through a standard function (namely $n\mapsto \lfloor s\cdot n \rfloor$), they are equivalent, and  $st\left(\frac{\alpha\beta^n}{\gamma}\right)=r$.
\end{rmrk}

\begin{rmrk} There exists $\alpha\in\s\N$ such that for all $n\geq 2\in\N$, for all $\alpha_{1}<\dots<\alpha_{n}$ equivalent to $\alpha$ and for all monomials $M_{1}\left(x_{1},\dots,x_{n}\right)\neq M_{2}\left(x_{1},\dots,x_{n}\right)$ one has that $M_{1}\left(\alpha_{1},\dots,\alpha_{n}\right)\not\asymp M_{2}\left(\alpha_{1},\dots,\alpha_{n}\right)$. 

For example, let $\gamma$ be infinite and let $\alpha=2^{\gamma^\gamma}$. So, if a number is equivalent to $\alpha$ it must be of the form $2^{\delta^\delta}$ for some $\delta\sim\gamma$ due to Lemma \ref{2 proprietà generali sim}.(3). Take $\alpha_1=2^{\gamma_1^{\gamma_1}}>\dots > \alpha_n=2^{\gamma_n^{\gamma_n}}$ equivalent to $\alpha$ (and hence $\gamma_1\sim\dots\sim\gamma_n\sim\gamma$). Assume that $M_1(x_1,\dots, x_n)=\prod_{i=1}^n x^{c_i}$ and $M_2(x_1,\dots, x_n)=\prod_{i=1}^n x^{d_i}$. If then $M_1(\alpha_1,\dots, \alpha_n)\asymp M_2(\alpha_1,\dots, \alpha_n)$, apply $l_2$ to find $$c_1 \gamma_1^{\gamma_1} +\dots + c_n \gamma_n^{\gamma_n}=d_1 \gamma_1^{\gamma_1} + \dots + d_n \gamma_n^{\gamma_n} + m$$ for some finite $m$. Notice that $\gamma_1^{\gamma_1}\gg\dots\gg\gamma_n^{\gamma_n}$: so it must be $c_1=d_1$, and inductively, $c_i=d_i$.   
\end{rmrk}

\section{Consequences for the partition regularity of equations}

\subsection{Old results from a new lens}\label{sec OldRes}

Archimedean classes and asymptotics provide a simple way to obtain necessary conditions for the PR of equations. In this section, we show how all main known results in the area admit trivial proofs when approached with this method.

The idea is always the same: start with a PR equation, apply Theorem \ref{thm:nscharasympPR} (eventually, after taking some logarithm), take the $\asymp$-largest monomials and force their sum to be $\asymp$-smaller.

To clarify this idea, we start with, arguably, the most famous result about the PR of equations: Rado's Theorem for linear equations. 

\begin{theorem}[Rado] \label{2 Rado}    Let $a_{1},\dots,a_{n}\in\mathbb{Q}$. The linear equation $a_1 x_1 + \dots + a_n x_n = 0$ is PR on $\N$ if and only if there exists a nonempty set $J\sub [n]$ such that $\sum_{j\in J} a_j = 0$.    
\end{theorem}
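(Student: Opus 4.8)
The statement is Rado's classical theorem for a single linear equation, and the paper's machinery makes both directions short. The plan is to prove the two implications separately, using Theorem \ref{2 Teo fondamentare PR sse esistono soluzioni equiv} (PR $\iff$ existence of $\sim$-equivalent solutions in $\s\N$) together with the Archimedean-class technology of Subsection 1.3, in particular Theorem \ref{thm:nscharasympPR} and the corollary following Proposition \ref{1 Teorema sulle teste} (``if $\sum c_i x_i = 0$ is asymptotically PR in $I_1,\dots,I_s$ then $\sum_{i\in I_1} c_i = 0$'').

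\textbf{Sufficiency ($\exists J$ with $\sum_{j\in J}a_j=0$ $\implies$ PR).} First I would clear denominators so that the $a_i$ are integers, since PR on $\N$ is unaffected by scaling. Then I would invoke Theorem \ref{thm char linear asymp}: after reindexing so that $J=\{1,\dots,k\}$, the hypothesis $\sum_{i=1}^k a_i=0$ is exactly what that theorem requires, so $\sum a_i x_i=0$ is asymptotically PR in $\{1,\dots,k\},\{k+1,\dots,n\}$, and hence PR by Theorem \ref{thmasympclas} (or directly by the trivial direction $(2)\Rightarrow(1)$). Alternatively, if one wants a fully self-contained argument, one observes that $\sum_{i\in J}a_i=0$ lets us solve the equation on any van der Waerden configuration $\{a,b,a+ib : i\le N\}$ of sufficient length — set $x_j$ for $j\notin J$ equal to a common value and absorb the rest into an arithmetic progression — and these configurations are PR by van der Waerden's theorem.

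\textbf{Necessity (PR $\implies \exists J$).} Suppose $\sum a_i x_i=0$ is PR. By Theorem \ref{thmasympclas} it is asymptotically PR in some $I_1,\dots,I_s$; by the corollary to Proposition \ref{1 Teorema sulle teste}, $\sum_{i\in I_1}a_i=0$, so $J:=I_1$ works. To make this airtight I should note that $I_1$ is nonempty (it contains the index of a $\asymp$-largest $\alpha_i$), and that the corollary genuinely applies to a single linear equation — which it does, $P_{max}$ being the sum of the coefficients of the variables in the top Archimedean class, evaluated at $p^m\sigma$, which cancels the common factor and leaves $\sum_{i\in I_1}a_i=0$. For a proof not routed through Proposition \ref{1 Teorema sulle teste}: take $\alpha_1\sim\dots\sim\alpha_n$ with $\sum a_i\alpha_i=0$ (Theorem \ref{2 Teo fondamentare PR sse esistono soluzioni equiv}), let $Arc$ be the largest Archimedean class among the $\alpha_i$ and $I$ the corresponding index set; dividing the equation by an $\alpha_{i_0}$ with $i_0\in I$ and using Lemma \ref{3 Lemma robette}.(1) (smaller classes become infinitesimal) plus Theorem \ref{2 stessa classe e sim allora st=1} ($st(\alpha_i/\alpha_{i_0})=1$ for $i\in I$, since $\alpha_i\sim\alpha_{i_0}$ and $\alpha_i\asymp\alpha_{i_0}$), taking standard parts yields $\sum_{i\in I}a_i=0$.

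\textbf{Expected main obstacle.} There is no deep obstacle here — the theorem is a showcase application of the earlier results. The only point requiring a little care is the bookkeeping in the necessity direction: one must be sure the ``largest Archimedean class'' is well-defined (there are finitely many $\alpha_i$, and $\asymp$ together with $\ll$ totally orders the classes, so a maximum exists) and that taking standard parts after dividing is legitimate (the terms outside $I$ contribute infinitesimals by Lemma \ref{3 Lemma robette}.(1), the terms inside $I$ contribute $a_i\cdot st(\alpha_i/\alpha_{i_0})=a_i$ by Theorem \ref{2 stessa classe e sim allora st=1}). I would present the necessity half via Proposition \ref{1 Teorema sulle teste}'s corollary for brevity and relegate the hands-on version to a remark.
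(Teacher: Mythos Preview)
Your hands-on necessity argument is exactly the paper's proof: take $\alpha_1\sim\dots\sim\alpha_n$ solving the equation, pick the largest $\alpha_{i_0}$, set $I_1=\{i:\alpha_i\asymp\alpha_{i_0}\}$, divide through and take standard parts using Theorem \ref{2 stessa classe e sim allora st=1}. The paper in fact only proves necessity (the header reads ``Proof of necessity''), since sufficiency is classical and this section is about recovering known \emph{necessary} conditions via asymptotics.

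One genuine issue: your proposed sufficiency argument via Theorem \ref{thm char linear asymp} is circular. Both proofs of Theorem \ref{thm char linear asymp} in the paper explicitly invoke Rado's theorem (the first uses ``Rado's characterization of the PR of linear systems'' to verify the columns condition, the second uses ``Rado's Theorem for linear systems'' to get the van der Waerden-type configuration). So you cannot deduce sufficiency of Rado from Theorem \ref{thm char linear asymp} without begging the question. Your alternative suggestion via van der Waerden configurations is closer to an honest proof, but as stated it is the standard argument and not really a consequence of the paper's machinery. Similarly, routing necessity through the corollary to Proposition \ref{1 Teorema sulle teste} is fine for a linear equation, but note that the general Proposition rests on Theorem \ref{2 teorema parti standard monomi archimedei}, whose proof already appeals to inhomogeneous Rado; the paper avoids this by giving the direct divide-and-take-$st$ computation, which is the version you should present as primary rather than as a remark.
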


\begin{proof}[Proof of necessity] As $a_1 x_1 + \dots + a_n x_n = 0$ is PR, by Theorem \ref{2 Teo fondamentare PR sse esistono soluzioni equiv} we have $\alpha_1\sim \dots \sim \alpha_n$ such that 
    \begin{equation}\label{eq Rado}
        \sum_{i=1}^n a_i \alpha_i=0.
    \end{equation}

    Without loss of generality, let $\alpha_{1}:=\max\{\alpha_{i}\mid i\leq n\}$, and $I_{1}=\{i\mid \alpha_{i}\asymp\alpha_1\}$. By dividing equation (\ref{eq Rado}) by $\alpha_{1}$ and taking the standard part we get
\[0=st\left(\sum_{i=1}^n a_i \frac{\alpha_i}{\alpha_{1}}\right)=\sum_{i=1}^{n} a_{i}st\left(\frac{\alpha_{i}}{\alpha_{1}}\right)=\sum_{i\in I_{1}}a_{i},\]

as $st(\frac{\alpha_i}{\alpha_1})=1$ for $i\in J_{1}$ by Theorem \ref{2 stessa classe e sim allora st=1}, and $st(\frac{\alpha_i}{\alpha_1})=0$ otherwise.
\end{proof}

The same proof, with Remark \ref{2 Remark monomi omogenei} in place of Theorem \ref{2 stessa classe e sim allora st=1}, gives the following known generalization of Rado's Theorem to homogeneous polynomials.

\begin{theorem}[{\cite[Theorem 3.5.18]{baglini2012hyperintegers}  and \cite[Corollary 3.9]{di2018ramsey}}]\label{2 Pezzo omogeneo alla Rado}
    Let $P(x_1,\dots, x_n)$ be a homogeneous PR polynomial. Then it satisfies Rado condition, i.e. a sum of the coefficients must be zero. 
\end{theorem}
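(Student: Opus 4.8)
\textbf{Proof proposal for Theorem \ref{2 Pezzo omogeneo alla Rado}.}

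The plan is to run the exact same argument as in the proof of Rado's Theorem (Theorem \ref{2 Rado}) above, with Remark \ref{2 Remark monomi omogenei} replacing Theorem \ref{2 stessa classe e sim allora st=1}. First I would write $P(x_1,\dots,x_n)=\sum_{i=1}^t c_i M_i(x_1,\dots,x_n)$ as a sum of monomials with nonzero integer coefficients $c_i$, where each $M_i$ is a (primitive) monomial; since $P$ is homogeneous, all the $M_i$ have the same degree $d$. By Theorem \ref{2 Teo fondamentare PR sse esistono soluzioni equiv}, the partition regularity of $P(x_1,\dots,x_n)=0$ gives $\alpha_1\sim\dots\sim\alpha_n\in\s\N$ with $\sum_{i=1}^t c_i M_i(\alpha_1,\dots,\alpha_n)=0$.

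Next I would single out the dominant term. Among $M_1(\alpha),\dots,M_t(\alpha)$ (writing $\alpha=(\alpha_1,\dots,\alpha_n)$) choose, say, $M_1(\alpha)$ to be $\asymp$-maximal, i.e. $M_1(\alpha)\gg M_i(\alpha)$ or $M_1(\alpha)\asymp M_i(\alpha)$ for every $i$, and let $K=\{i\le t\mid M_i(\alpha)\asymp M_1(\alpha)\}$. Dividing the equation $\sum_{i=1}^t c_i M_i(\alpha)=0$ by $M_1(\alpha)$ yields
\[
0=\sum_{i=1}^t c_i\,\frac{M_i(\alpha)}{M_1(\alpha)}.
\]
For $i\notin K$ we have $M_i(\alpha)\ll M_1(\alpha)$, so $\frac{M_i(\alpha)}{M_1(\alpha)}$ is infinitesimal and $st\!\left(\frac{M_i(\alpha)}{M_1(\alpha)}\right)=0$. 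For $i\in K$, all the monomials $M_i$ are homogeneous of the same degree $d$, so Remark \ref{2 Remark monomi omogenei} (the homogeneous case of Theorem \ref{2 teorema parti standard monomi archimedei}) gives $st\!\left(\frac{M_i(\alpha)}{M_1(\alpha)}\right)=1$. Taking standard parts of the displayed equation therefore gives $0=\sum_{i\in K} c_i$, which is precisely Rado's condition for $P$: a nonempty subset of the coefficients sums to zero.

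The only point requiring care — and the main ``obstacle'', though it is genuinely mild — is verifying that the hypotheses of Remark \ref{2 Remark monomi omogenei} / Theorem \ref{2 teorema parti standard monomi archimedei} are met for the monomials indexed by $K$: we need them to be pairwise $\asymp$-equivalent (true by the definition of $K$, since $\asymp$ is an equivalence relation) and evaluated at a $\sim$-equivalent tuple $\alpha_1\sim\dots\sim\alpha_n$ (true by Theorem \ref{2 Teo fondamentare PR sse esistono soluzioni equiv}). One should also note that $K$ is nonempty since $1\in K$, so the resulting vanishing subsum is over a nonempty index set, as required. Homogeneity is used exactly once, to force the degree difference $d_i-d_j$ in Theorem \ref{2 teorema parti standard monomi archimedei}(2) to be zero so that the standard ratio is $(p^m\sigma)^0=1$ regardless of the standard head $\sigma$.
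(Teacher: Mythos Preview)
Your proposal is correct and follows exactly the paper's own route: the paper simply states that the same proof as for Theorem \ref{2 Rado} works, with Remark \ref{2 Remark monomi omogenei} in place of Theorem \ref{2 stessa classe e sim allora st=1}, and you have faithfully spelled out those details.
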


The same idea can also be used to reprove a generalization of Rado's Theorem to certain nonlinear polynomials that first appeared in \cite{di2018ramsey}.

\begin{theorem}[{\cite[Theorem 3.10]{di2018ramsey}}]\label{3 Teo pezzo grande omogeneo e Rado}

    Let $P\left(x_1,\dots, x_n\right)= P_1\left(x_1\right) + \dots + P_n\left(x_n\right)$ be a polynomial in which every monomial contains a single variable. For every polynomial $P_i\left(x_i\right)$ let $a^{(i)}x^{n_i}_i$ be its monomial with higher degree. 

    If $P\left(x_1,\dots, x_n\right) = 0$ is PR then there must be a nonempty $J\sub [n]$ such that:
    \begin{itemize}
        \item if $i,j\in J$ then $n_i=n_j$, and
        \item $\sum_{j\in J} a^{(j)}=0$.
    \end{itemize}
    
\end{theorem}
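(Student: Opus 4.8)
The plan is to follow the recipe advertised at the start of the subsection: pass to a nonstandard monochromatic solution, identify the Archimedean-largest monomials, and force their sum to vanish via standard parts. Since every monomial of $P$ is univariate, the Archimedean class of each summand $P_i(\alpha_i)$ is governed entirely by its leading term $a^{(i)}\alpha_i^{n_i}$, so this is really a statement about the monomials $a^{(i)}\alpha_i^{n_i}$.

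First I would invoke Theorem \ref{2 Teo fondamentare PR sse esistono soluzioni equiv} to get $\alpha_1\sim\dots\sim\alpha_n\in\s\N$ with $P(\alpha_1,\dots,\alpha_n)=0$. We may assume each $\alpha_i$ is infinite (otherwise all are finite by Lemma \ref{2 proprietà generali sim}.(4), giving a trivial constant solution, and then the conclusion is the usual finite Rado argument). By Remark \ref{3 rmrk 2 variabili}.(1), $P_i(\alpha_i)\asymp a^{(i)}\alpha_i^{n_i}$, i.e. each summand lies in the same Archimedean class as its leading term. Now order the summands by $\asymp$ and let $J\subseteq[n]$ index those $i$ for which $a^{(i)}\alpha_i^{n_i}$ is $\asymp$-maximal among $\{a^{(j)}\alpha_j^{n_j}\mid j\le n\}$; this $J$ is nonempty. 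Fix some $i_0\in J$, divide the equation $\sum_i P_i(\alpha_i)=0$ by $\alpha_{i_0}^{n_{i_0}}$ and take the standard part: the summands with $i\notin J$ contribute $0$ (their ratio to $\alpha_{i_0}^{n_{i_0}}$ is infinitesimal), so $0=\sum_{i\in J} a^{(i)}\, st\!\left(\alpha_i^{n_i}/\alpha_{i_0}^{n_{i_0}}\right)$.

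The remaining task is to show that for $i,j\in J$ we must have $n_i=n_j$ and that the surviving standard parts are all equal (to a common positive real, which can then be cleared). This is exactly where Corollary \ref{3 corollario x^n asymp y^m allora n=m} (together with its extension in Remark \ref{3 rmrk 2 variabili}.(1)) does the work: for $i,j\in J$ we have $\alpha_i^{n_i}\asymp\alpha_j^{n_j}$ with $\alpha_i\sim\alpha_j$, whence $n_i=n_j$ and $\alpha_i\asymp\alpha_j$; and then Theorem \ref{2 stessa classe e sim allora st=1} gives $st(\alpha_i/\alpha_j)=1$, so that $st(\alpha_i^{n_i}/\alpha_{i_0}^{n_{i_0}})=1$ for every $i\in J$. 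Plugging this in yields $\sum_{j\in J} a^{(j)}=0$, which is the claim. Alternatively, one can cite Theorem \ref{2 teorema parti standard monomi archimedei} applied to the monomials $\{x_i^{n_i}\mid i\in J\}$ to obtain part (1) (which forces $n_i=n_j$ since the exponent differences must be compatible with a single $m$) and part (2) (which makes the standard parts of the ratios homogeneous-looking after the substitution $\alpha_i\mapsto\sigma p^{l_p(\alpha_i)}$).

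I expect the only real subtlety to be the bookkeeping that "the Archimedean class of $P_i(\alpha_i)$ is that of its leading monomial" — one should note $P_i$ is a standard polynomial and $\alpha_i$ is infinite, so lower-order terms are $\ll$ the leading term and Lemma \ref{3 Lemma robette}.(1) applies — and the case split on whether the $\alpha_i$ are all finite. Once the problem is reduced to monomials $a^{(i)}\alpha_i^{n_i}$ in a single variable apiece, everything is immediate from Corollary \ref{3 corollario x^n asymp y^m allora n=m} and Theorem \ref{2 stessa classe e sim allora st=1}; there is no genuine obstacle beyond carefully extracting the maximal class and discarding the infinitesimal contributions.
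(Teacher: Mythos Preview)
Your argument is correct and mirrors the paper's proof almost exactly: pass to a $\sim$-equivalent nonstandard solution, isolate the $\asymp$-maximal leading monomials, use the $\alpha^n\asymp\beta^m\Rightarrow n=m$ fact to equalize the exponents, then divide by the largest term and take standard parts via Theorem~\ref{2 stessa classe e sim allora st=1}. If anything, your citation of Corollary~\ref{3 corollario x^n asymp y^m allora n=m} for the step $n_i=n_j$ is more on point than the paper's reference to Lemma~\ref{3 Lemma robette}.(4), which by itself does not handle distinct exponents.
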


\begin{proof}

    Suppose $P\left(x_1,\dots, x_n\right)=0$ is PR. Then by Theorem \ref{2 Teo fondamentare PR sse esistono soluzioni equiv} there are $\xi_1\sim\dots\sim\xi_n $ such that $P\left(\xi_1,\dots, \xi_n\right)=0$. The monomials in the largest Archimedean class in $P\left(\xi_1,\dots, \xi_n\right)$ must have the form $a^{(i)}\xi_{i}^{n_{i}}$, as for every $\alpha\in\s\N$ $\alpha^n \ll \alpha ^m $ if $n<m$. 
    
    Without loss of generality, suppose that $\xi_1^{n_1}\asymp \dots \asymp \xi_k^{n_k}$ are the $\asymp$-largest monomials. Lemma \ref{3 Lemma robette}.4 forces $n_1=\dots = n_k=:N$ and $\xi_1\asymp \dots \asymp \xi_k$. We conclude by letting $\varepsilon=P\left(\xi_{1},\dots,\xi_{n}\right)-\sum_{i=1}^{k} a^{(i)}\xi_{i}^{N}$, by dividing $0=P\left(\xi_1,\dots, \xi_n\right)$ by $\xi_1^{N}$ and by taking the standard part:
    \begin{align*}
        0&= st\left(a^{(1)}\frac{\xi_1^N}{\xi_1^N} + \dots + a^{(k)}\frac{\xi_k^N}{\xi_1^N} + \frac{\varepsilon}{\xi_1^N} \right)=\\
        & = a^{(1)}st\left(\frac{\xi_1^N}{\xi_1^N}\right) + \dots + a^{(k)}st\left(\frac{\xi_k^N}{\xi_1^N}\right) + st\left(\frac{\varepsilon }{\xi_1^N}\right)=\\
        &\stackrel{(\ref{2 stessa classe e sim allora st=1})}{=}a^{(1)}\cdot 1 + \dots + a^{(k)}\cdot 1 + st\left(\frac{\varepsilon}{\xi_1^N}\right)=\\
        &\stackrel{\varepsilon \ll \xi_1^N}{=}a^{(1)} + \dots  + a^{(k)}.\qedhere
    \end{align*}
\end{proof}

We can also generalize: indeed, the argument in the proof of Theorem \ref{3 Teo pezzo grande omogeneo e Rado} works whenever the bigger monomials have only one variable each. 

\begin{prop}

    Let $P(x_1,\dots, x_n)= a_1x_1^{d_1} + \dots + a_n x_n^{d_n} + R(x_1,\dots, x_n)$ where $\deg(R)<\min\{d_i\mid i\leq n\}$. If $P$ is PR then there must be a (nonempty) subset $J\sub [n]$ such that:
    \begin{itemize}
        \item if $i,j\in J$ then $d_i=d_j$ and
        \item $\sum_{j\in J} a_{j}=0$.
    \end{itemize}
    
\end{prop}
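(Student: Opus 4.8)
The plan is to mimic the proof of Theorem \ref{3 Teo pezzo grande omogeneo e Rado} almost verbatim, the only difference being that the leading monomials $a_i x_i^{d_i}$ are already the full monomials (one per variable), rather than the top-degree parts of auxiliary polynomials $P_i$; the remainder $R$ plays the role of the collected lower-order terms, and the hypothesis $\deg(R) < \min_i d_i$ is exactly what makes $R$ negligible after normalization.

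First I would apply Theorem \ref{2 Teo fondamentare PR sse esistono soluzioni equiv}: since $P=0$ is PR, there exist $\xi_1 \sim \dots \sim \xi_n \in \s\N$ with $P(\xi_1,\dots,\xi_n)=0$. Next I would identify the $\asymp$-largest monomials appearing in $P(\xi_1,\dots,\xi_n)$. Each monomial in $R$ has degree $<\min_i d_i$, so — since all $\xi_i$ are $\sim$-equivalent, hence in particular infinite and pairwise comparable in a controlled way via $\asymp$, and since $\alpha^a \ll \alpha^b$ whenever $a<b$ for infinite $\alpha$ (more precisely any monomial of degree $a$ evaluated at the $\xi_i$ is $\ll$ any monomial of degree $b$ evaluated at them, by Corollary \ref{3 corollario x^n asymp y^m allora n=m} / Remark \ref{3 rmrk 2 variabili} type reasoning applied to $\asymp$-classes) — every monomial of $R$ is strictly $\asymp$-smaller than $\xi_{i_0}^{d_{i_0}}$ for any $i_0$ achieving $\max_i l(\xi_i^{d_i})$. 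So the $\asymp$-largest monomials of $P(\xi_1,\dots,\xi_n)$ are among the $a_i \xi_i^{d_i}$. Reindexing, suppose $\xi_1^{d_1} \asymp \dots \asymp \xi_k^{d_k}$ are exactly the $\asymp$-largest ones. By Corollary \ref{3 corollario x^n asymp y^m allora n=m} (or Lemma \ref{3 Lemma robette}.(4) combined with it), $\xi_i^{d_i} \asymp \xi_j^{d_j}$ together with $\xi_i \sim \xi_j$ forces $d_i = d_j =: N$ and $\xi_i \asymp \xi_j$ for all $i,j \le k$.

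Then I would set $\varepsilon := P(\xi_1,\dots,\xi_n) - \sum_{i=1}^k a_i \xi_i^{d_i}$, which comprises the non-maximal $a_i\xi_i^{d_i}$ (for $i>k$) together with $R(\xi_1,\dots,\xi_n)$; all of these are $\ll \xi_1^N$ by construction (the former by definition of the $\asymp$-largest class, the latter by the degree bound). Dividing the equation $0 = P(\xi_1,\dots,\xi_n)$ by $\xi_1^N$ and taking standard parts,
\begin{align*}
0 &= st\left(a_1\frac{\xi_1^N}{\xi_1^N} + \dots + a_k\frac{\xi_k^N}{\xi_1^N} + \frac{\varepsilon}{\xi_1^N}\right)\\
&= a_1 st\left(\frac{\xi_1^N}{\xi_1^N}\right) + \dots + a_k st\left(\frac{\xi_k^N}{\xi_1^N}\right) + st\left(\frac{\varepsilon}{\xi_1^N}\right)\\
&= a_1 + \dots + a_k,
\end{align*}
where the last line uses Theorem \ref{2 stessa classe e sim allora st=1} (each $st(\xi_i^N/\xi_1^N) = st(\xi_i/\xi_1)^N = 1$ since $\xi_i \sim \xi_1$ and $\xi_i \asymp \xi_1$) and $st(\varepsilon/\xi_1^N) = 0$ since $\varepsilon \ll \xi_1^N$. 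Taking $J = \{1,\dots,k\}$ gives the required set.

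The only genuinely delicate point — and the one I would spell out carefully — is the claim that every monomial of $R$, evaluated at the $\xi_i$, is $\asymp$-strictly-smaller than $\xi_{i_0}^{d_{i_0}}$. This needs that a monomial $\prod \xi_j^{e_j}$ with $\sum e_j = \deg(R) < N$ satisfies $\prod \xi_j^{e_j} \ll \xi_{i_0}^N$; since $\xi_{i_0}^{d_{i_0}}$ is $\asymp$-maximal among the $\xi_i^{d_i}$ we have $l(\xi_j) \le l(\xi_{i_0}^{d_{i_0}})/d_{i_0}$-ish control, but the clean way is: $l(\prod \xi_j^{e_j}) = \sum e_j l(\xi_j)$, and comparing with $l(\xi_{i_0}^N) = N l(\xi_{i_0})$, one uses $\deg(R) \le N-1$ plus the fact that each $l(\xi_j)$ is $\asymp$-bounded by $\max_i l(\xi_i)$, which is itself $\asymp l(\xi_{i_0})$ by maximality of $\xi_{i_0}^{d_{i_0}}$; the gap of at least one full factor of $l(\xi_{i_0})$ (an infinite quantity) makes the difference of logarithms infinite, hence $\ll$. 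I expect this to be routine given Remark \ref{basic prop log} and Proposition \ref{3 stessa classe sse log a distanza finita}, so the proof is essentially identical in structure to that of Theorem \ref{3 Teo pezzo grande omogeneo e Rado}, and I would likely just say so.
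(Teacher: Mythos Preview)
Your overall approach matches the paper's exactly: find $\sim$-equivalent solutions, argue the $\asymp$-largest monomials are among the $\xi_i^{d_i}$, use Corollary \ref{3 corollario x^n asymp y^m allora n=m} to equalize their degrees, then divide by $\xi_1^N$ and take standard parts.

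The only real issue is your handling of the ``delicate point''. First, the parenthetical claim that any monomial of degree $a$ in $\sim$-equivalent variables is $\ll$ any monomial of degree $b>a$ is false in general: multiplicative Schur gives $\xi_1\sim\xi_2\sim\xi_3=\xi_1\xi_2$, so a degree-$1$ monomial equals a degree-$2$ one. Second, your logarithmic argument as written does not close: from $l(\xi_j)\le\max_i l(\xi_i)\asymp l(\xi_{i_0})$ and $\sum e_j\le N-1$ you only obtain $\sum_j e_j\, l(\xi_j)\le (N-1)C\, l(\xi_{i_0})$ for some finite constant $C$, and $(N-1)C$ need not be smaller than $N$, so the ``gap of one full factor of $l(\xi_{i_0})$'' is not justified. (It can be salvaged by using the sharper bound $l(\xi_m)\le \tfrac{N}{d_m}l(\xi_{i_0})+O(1)$ together with $r<d_m$, but that is not what you wrote.)

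The paper's argument is cleaner and avoids logarithms altogether: let $\xi_m$ be the \emph{largest variable} (rather than the index maximizing $\xi_i^{d_i}$). Then any monomial of $R$ of degree $r$ satisfies $\prod_j\xi_j^{e_j}\le\xi_m^{\,r}\ll\xi_m^{\,d_m}$, since $r<\min_i d_i\le d_m$ and $\xi_m$ is infinite. As $\xi_m^{d_m}$ is itself one of the leading terms, no $R$-monomial can lie in the top $\asymp$-class. With this fix your proof is complete and identical to the paper's.
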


\begin{proof}

    We proceed as in Theorem \ref{3 Teo pezzo grande omogeneo e Rado}, looking for the largest monomials. But since $\deg(R)$ is small, these monomials must be in $\{x_1^{d_1},\dots, x_n^{d_n}\}$: indeed, order the variables with respect to their Archimedean classes. Just for the sake of simplicity, suppose that $x_1\gg x_i$ for all $i\not=1$. Then $$\prod_{i=1}^n x_i^{m_i}\ll\prod_{i=1}^n x_1 ^{m_i}=x_1^{\sum_{i=1}^n m_i}$$ and so in particular every monomial of degree $r$ in $R(x_1,\dots, x_n)$ will be smaller than $x_1^r$. Since $\deg(R)<d_i$ for all $i$, we conclude that all monomials in $R$ are smaller than $x_1^{d_1}$ and so they cannot be in the largest Archimedean class. Adapting this argument in general we get that monomials in the largest class must be in between $x_1^{d_1}, \dots, x_n ^{d_n}$. 

    We conclude as in the proof of Theorem \ref{3 Teo pezzo grande omogeneo e Rado}.
\end{proof}

Recently, there has been a rising interest in characterizing the PR of equations of the form $ax+by=cw^{m}z^{n}$. In \cite{FarhangiMagner}, a necessary condition for the PR of such equations if given; we reprove it here with our methods.

\begin{prop}[{\cite[Theorem 1.(a)]{FarhangiMagner}}]
    If $a+b\not=0 $ and $n,m>1$ the equation $ax +by = cw^mz^n $ is not PR.
\end{prop}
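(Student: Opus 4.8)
The plan is to follow the recipe of Section~\ref{sec OldRes}: pass to a $\sim$-equivalent nonstandard solution, compare Archimedean classes, and then invoke Proposition~\ref{2 remark sulla Rado condition degli esponenti} to pin down the exponents. First I would assume, towards a contradiction, that $ax+by=cw^mz^n$ is partition regular in the non-trivial sense, i.e.\ that in every finite coloring of $\N$ it has monochromatic solutions with arbitrarily large entries. This caveat is necessary: for instance $4x+4y=w^2z^2$ has the constant solution $(2,2,2,2)$, so $a+b\neq 0$ by itself does not forbid trivial PR, exactly as in Theorem~(B). Running the saturation argument from the proof of Theorem~\ref{thm:nscharasympPR} on the family of solution sets with all entries $>N$, I obtain infinite hypernaturals $\alpha\sim\beta\sim\omega\sim\zeta$ with $a\alpha+b\beta=c\omega^m\zeta^n$; clearly we may also assume $c\neq 0$.

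The crucial step is to identify the Archimedean class of the left-hand side. I claim that $a\alpha+b\beta\asymp\gamma$ for some $\gamma\in\{\alpha,\beta\}$. If $\alpha$ and $\beta$ lie in different Archimedean classes, or if one of $a,b$ vanishes, then an easy case check using parts (1) and (3) of Lemma~\ref{3 Lemma robette} shows that the surviving summand dominates, and $\gamma$ is the variable attached to it. If instead $\alpha\asymp\beta$, then since $\alpha\sim\beta$ Theorem~\ref{2 stessa classe e sim allora st=1} gives $st(\beta/\alpha)=1$, hence $st\big((a\alpha+b\beta)/\alpha\big)=a+b\neq 0$, so $a\alpha+b\beta\asymp\alpha$ and we may take $\gamma=\alpha$. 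This is precisely where the hypothesis $a+b\neq 0$ enters, and it is also precisely where it must enter: if $a+b=0$ the two summands cancel and the left-hand side may collapse into a strictly smaller Archimedean class, so the argument (rightly) breaks. Absorbing the nonzero constant $c$ via Lemma~\ref{3 Lemma robette}.(3), I conclude $\omega^m\zeta^n\asymp\gamma$.

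Finally, $\omega$, $\zeta$ and $\gamma$ are pairwise $\sim$-equivalent (each being $\sim$-equivalent to the original solution), and $\omega^m\zeta^n\asymp\gamma$, a monomial of degree $1$ in the single variable $\gamma$. So Proposition~\ref{2 remark sulla Rado condition degli esponenti} applies with exponents $(n_1,n_2)=(m,n)$ on one side and $m_1=1$ on the other: the multiset $\{m,n\}\cup\{-1\}$ must satisfy Rado's condition, i.e.\ some nonempty subsum of $\{m,n\}$ equals $1$. Since $m,n\geq 2$, every nonempty subsum of $\{m,n\}$ is at least $2$, a contradiction; hence the equation is not (non-trivially) PR. I expect the only genuine subtlety to be the Archimedean-class bookkeeping of the middle paragraph — ensuring that the binomial $a\alpha+b\beta$ does not drop to a lower class — which is exactly the point controlled by $a+b\neq 0$; everything else is a direct application of results already established.
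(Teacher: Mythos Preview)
Your argument is correct and uses exactly the same two ingredients as the paper---Proposition~\ref{2 remark sulla Rado condition degli esponenti} and Theorem~\ref{2 stessa classe e sim allora st=1}---but in the opposite order: the paper first applies the Rado exponent condition to rule out $\gamma^{m}\delta^{n}$ from the largest Archimedean class (forcing $\alpha\asymp\beta\gg\gamma^{m}\delta^{n}$), and only then divides by $\alpha$ and takes the standard part to reach $a+b=0$; you instead use $a+b\neq 0$ and $st(\beta/\alpha)=1$ first to pin the left-hand side to the class of $\alpha$ or $\beta$, and invoke the exponent condition at the end. Your version has the small advantage of making the caveat about trivial constant solutions explicit (e.g.\ your example $4x+4y=w^{2}z^{2}$), a point the paper's three-line proof glosses over.
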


\begin{proof}
    Suppose the equation to be PR, and let $\alpha\sim\beta\sim\gamma\sim\delta$ be such that $a\alpha+b\beta=c\gamma^{m}\delta^{n}$. As $n,m>1$, by Proposition \ref{2 remark sulla Rado condition degli esponenti} $\alpha\asymp \beta \gg \gamma^m \delta^n$. Dividing by $\alpha$ and taking the standard part we get $a+b =0$, against our hypothesis.  
\end{proof}

\subsection{New results: partition regularity of Fermat-Catalan equations}\label{sec:new results}

Methods similar to the ones of Section \ref{sec OldRes} can be used to obtain also new results about the PR of nonlinear equations. We start with the following condition, which is an easy consequence of Proposition \ref{2 remark sulla Rado condition degli esponenti}. 

\begin{prop}
Let $P(x_1,\dots, x_n)\in \Z[x_1,\dots, x_n]$ be a polynomial such that no pair of monomials in it satisfies Rado condition on the exponents. Then $P$ is not PR.
\end{prop}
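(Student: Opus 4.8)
The plan is to assume toward a contradiction that $P(x_1,\dots,x_n)=0$ is PR and derive a violation of the hypothesis. By Theorem \ref{2 Teo fondamentare PR sse esistono soluzioni equiv} there exist $\alpha_1\sim\dots\sim\alpha_n\in\s\N$ with $P(\alpha_1,\dots,\alpha_n)=0$. Now examine the monomials $M_1,\dots,M_t$ appearing in $P$, evaluated at $(\alpha_1,\dots,\alpha_n)$, and partition them according to the Archimedean equivalence $\asymp$: let $M_1(\vec\alpha)\asymp\dots\asymp M_k(\vec\alpha)$ be the $\asymp$-largest ones (after relabelling). Since $P(\vec\alpha)=0$, dividing by $M_1(\vec\alpha)$ and taking standard parts shows that the sum of the leading coefficients (weighted by $st(M_i(\vec\alpha)/M_1(\vec\alpha))$) must vanish; in particular there must be \emph{at least two} monomials in the top Archimedean class, since a single nonzero monomial cannot be $\asymp$ to $0$. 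So we obtain two distinct monomials $M_i,M_j$ in $P$ with $M_i(\vec\alpha)\asymp M_j(\vec\alpha)$.

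The key step is then to invoke Proposition \ref{2 remark sulla Rado condition degli esponenti}: writing $M_i=c_i\prod_\ell x_\ell^{d_{i,\ell}}$ and $M_j=c_j\prod_\ell x_\ell^{d_{j,\ell}}$, the relation $M_i(\vec\alpha)\asymp M_j(\vec\alpha)$ together with $\alpha_1\sim\dots\sim\alpha_n$ forces, by that proposition, the exponent multiset of $M_i$ together with the negated exponent multiset of $M_j$ to satisfy Rado's condition — i.e. some nonempty sub-sum of the exponents of $M_i$ equals some nonempty sub-sum of the exponents of $M_j$. (One has to be slightly careful that Proposition \ref{2 remark sulla Rado condition degli esponenti} is stated for products of \emph{distinct} $\sim$-equivalent elements $\alpha_i,\beta_j$, but since all our $\alpha_\ell$ are mutually $\sim$-equivalent, we may regard $M_i(\vec\alpha)$ as $\alpha_1^{n_1}\cdots\alpha_h^{n_h}$ with the $\alpha$'s drawn from the $\sim$-class, and likewise for $M_j$, so the proposition applies directly.) This is precisely the statement that the pair $\{M_i,M_j\}$ satisfies Rado condition on the exponents, contradicting the hypothesis that no pair of monomials in $P$ does.

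The main obstacle I anticipate is bookkeeping around degenerate cases rather than anything deep: one must handle the possibility that some $\alpha_\ell$ coincide or are finite (but $\sim$-equivalence of infinite generators, which is the relevant case for a nontrivial PR witness, is fine, and constant solutions are excluded implicitly or can be dismissed since a constant solution would make $P$ have a rational root and say nothing about PR in the combinatorial sense we need — more precisely, by Lemma \ref{2 proprietà generali sim}, if the $\alpha_\ell$ were standard they would all be equal, reducing $P(\vec\alpha)=0$ to a one-variable identity, and then a single monomial argument still needs the same Rado conclusion, or the claim should be read as excluding this trivial situation). The other point requiring a line of care is ensuring the top Archimedean class genuinely contains $\geq 2$ monomials: this is where $P(\vec\alpha)=0$ is essential, since if it contained only one monomial $M_1(\vec\alpha)$ then $M_1(\vec\alpha)$ would be $\asymp$-strictly larger than every other term, hence $\asymp$-larger than their sum by Lemma \ref{3 Lemma robette}.(1)--(2), so $P(\vec\alpha)=M_1(\vec\alpha)+(\text{smaller})\asymp M_1(\vec\alpha)\neq 0$, a contradiction. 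Once these points are settled the proof is a two-line application of Proposition \ref{2 remark sulla Rado condition degli esponenti}.
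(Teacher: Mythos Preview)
Your proof is correct and follows essentially the same route as the paper's: both hinge on Proposition \ref{2 remark sulla Rado condition degli esponenti}, with the paper phrasing it contrapositively (no Rado condition on exponents $\Rightarrow$ all monomials lie in distinct Archimedean classes $\Rightarrow$ the sum is $\asymp$ the largest one, hence nonzero) while you argue directly (sum zero $\Rightarrow$ the top class contains at least two monomials $\Rightarrow$ Rado condition holds for that pair). Your extra care about the top class having $\geq 2$ elements and about constant/finite solutions is more explicit than the paper's two-line version, but the underlying argument is identical.
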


\begin{proof}
    Suppose it is, and let $\alpha_1\sim\dots\sim\alpha_n$ be a solution in $\s\N$. Due to Proposition \ref{2 remark sulla Rado condition degli esponenti}, all monomials in $P$, when evaluated in $\alpha_1,\dots,\alpha_n$, are in different Archimedean classes; in particular, their sum $P(\alpha_1,\dots,\alpha_n)$ will be in the larger one and so it cannot be 0.
\end{proof}

One of the major open problems in the area is the PR of the Pythagorean equation $x^{2}+y^{2}=z^{2}$. In \cite{di2018fermat} Di Nasso and Riggio considered generalization of this problem to Fermat-Catalan equations\footnote{Di Nasso and Riggio called these equations ``Fermat-like''; we prefer to call them Fermat-Catalan equations, following the usual number-theoretical naming.}, including $x^{n}-y^{m}=z^{k}$. Their main result, namely that such an equation is never PR if $n\notin \{m,k\}$, can be easily seen as a consequence of Theorem \ref{3 Teo pezzo grande omogeneo e Rado} or, straight in the language of asymptotic PR, of \cite[Lemma 5.5.2]{di2025ramsey}. Without loss of generality, hence, we can reduce to the case $n=m$. The following question, already asked in \cite[Section 4]{di2018fermat}, remains open.

\begin{question}\label{quest} For which $n,k$ is $x^{n}-y^{n}=z^{k}$ PR?\end{question} 

The known answers to the above question are:

\begin{itemize}
    \item yes, for $n=1$ and any $k\in\N$ (Polynomial Van der Waerden Theorem, \cite{bergelson1996polynomial});
    \item yes, for $n=2, k=1$ (\cite[Corollary 1.8]{moreira2017monochromatic});
    \item no, for $n=k\geq 3$ (Fermat's last Theorem).
\end{itemize}

We largely extend the class of negative answers to Question \ref{quest} by providing a necessary condition for the PR of a larger class of equations in the following Theorem.

\begin{theorem}\label{4 th x^n - y^n} Let $n\geq 2$, let $a,b\in\Z$ and let $P(z)\in \Z[z]$ be a polynomial of degree $k$. If $n\notin \{k,k-1\}$ then $ax^n+by^n = P(z)$ is not PR, if not trivially (i.e. with a constant solution). \end{theorem}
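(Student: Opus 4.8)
The plan is to assume, for contradiction, that $ax^n + by^n = P(z)$ is PR with a nonconstant solution, and then use Theorem~\ref{2 Teo fondamentare PR sse esistono soluzioni equiv} to obtain $\alpha \sim \beta \sim \gamma \in \s\N$ with $a\alpha^n + b\beta^n = P(\gamma)$, where not all of $\alpha,\beta,\gamma$ are equal (in fact, since $\alpha\sim\beta\sim\gamma$, Lemma~\ref{2 proprietà generali sim}.(2) tells us that any two of them are either equal or differ by an infinite amount, so ``nontrivial'' means at least one is infinite). The strategy is the usual one from Section~\ref{sec OldRes}: identify the $\asymp$-largest monomials among $a\alpha^n$, $b\beta^n$ and the monomials of $P(\gamma)$, and derive a contradiction from the fact that the largest ones must cancel.

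First I would analyze the Archimedean class of $P(\gamma)$: since $\gamma$ is infinite (if $\gamma$ were finite then, by Lemma~\ref{2 proprietà generali sim}.(4) applied after noting $\gamma\sim\alpha\sim\beta$, all three would be that same finite number, giving a constant solution), $P(\gamma) \asymp \gamma^k$ by Remark~\ref{3 rmrk 2 variabili}.(1), provided the leading coefficient of $P$ is nonzero, which it is by definition of degree $k$. Next, compare $\alpha^n$, $\beta^n$, $\gamma^k$. By Corollary~\ref{3 corollario x^n asymp y^m allora n=m} (or Remark~\ref{3 rmrk 2 variabili}), whenever two of these expressions are $\asymp$-equivalent the exponents must match and the bases must be $\asymp$-equivalent: so $\alpha^n \asymp \gamma^k$ forces $n = k$, and $\beta^n \asymp \gamma^k$ forces $n = k$ as well; also $\alpha^n\asymp\beta^n$ forces $\alpha\asymp\beta$. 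Since we assume $n \notin \{k, k-1\}$, in particular $n \neq k$, so neither $\alpha^n$ nor $\beta^n$ is $\asymp$-equivalent to $\gamma^k \asymp P(\gamma)$. The key case split is then on which of the three quantities $\alpha^n$, $\beta^n$, $\gamma^k$ is $\asymp$-largest: (a) if $\gamma^k$ strictly dominates both $\alpha^n$ and $\beta^n$, then the right-hand side is $\asymp \gamma^k$ while the left-hand side is $\asymp$-smaller, contradicting the equation; (b) if one of $\alpha^n, \beta^n$ strictly dominates, say $\alpha^n \gg \beta^n, \gamma^k$, then the left side is $\asymp a\alpha^n$ (using Lemma~\ref{3 Lemma robette}.(1)) while the right side is strictly smaller — contradiction unless $a = 0$, but then we fall back to $by^n = P(z)$, which by the $n=1$ case of Corollary~\ref{3 corollario x^n asymp y^m allora n=m}-type reasoning again forces $n = k$; (c) the remaining case is $\alpha^n \asymp \beta^n \gg \gamma^k$, so $\alpha \asymp \beta$, and now dividing the equation by $\alpha^n$ and taking standard parts gives $a\,st(\alpha^n/\alpha^n) + b\,st(\beta^n/\alpha^n) = st(P(\gamma)/\alpha^n) = 0$; by Theorem~\ref{2 stessa classe e sim allora st=1} (applied to $\alpha^n \sim \beta^n$ via Lemma~\ref{2 proprietà generali sim}.(1), which are $\asymp$-equivalent) we get $st(\beta^n/\alpha^n) = 1$, so $a + b = 0$.

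This last conclusion $a + b = 0$ is not yet a contradiction, so the main obstacle — and where the hypothesis $n \neq k-1$ enters — is refining case (c). When $a + b = 0$ we have $a\alpha^n + b\beta^n = a(\alpha^n - \beta^n)$, and this difference, although $\asymp$-smaller than $\alpha^n$, need not be negligible; we must compute its Archimedean class more precisely. Writing $\alpha \asymp \beta$ with $\beta = \alpha + \eta$ where $\eta \ll \alpha$, we expand $\alpha^n - \beta^n$; the dominant term is $-n\alpha^{n-1}\eta$ (one has to be careful that this term does not itself vanish, i.e. that $\eta$ is genuinely of a single Archimedean class and $\alpha\neq\beta$, which holds because otherwise the solution is constant). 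So the left-hand side is $\asymp \alpha^{n-1}\eta$ for some $\eta$ with $1 \le \eta \ll \alpha$, hence its Archimedean class is strictly between that of $\alpha^{n-1}$ and $\alpha^n$ (inclusive of the lower end). Matching this with $P(\gamma) \asymp \gamma^k$ and using that $\alpha\asymp\beta\gg\gamma$ — so $\gamma^k \ll \alpha^k$, and combined with $\gamma^k$ being $\asymp$-comparable to $\alpha^{n-1}\eta \preceq \alpha^n$ — one extracts, via an integer-logarithm computation and Proposition~\ref{2 remark sulla Rado condition degli esponenti} / Theorem~\ref{2 teorema parti standard monomi archimedei} applied to the monomials $\alpha^{n-1}\eta$ and $\gamma^k$ (after introducing $\eta$ as an auxiliary $\sim$-equivalent quantity, using Lemma~\ref{2 proprietà generali sim}.(3) to realize it), a Rado-type relation on the exponents that forces $k \in \{n-1, n\}$. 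Since we have already excluded $k = n$, we are in the excluded case $k = n-1$, the final contradiction. I expect the bookkeeping around the auxiliary variable $\eta$ and ensuring all the ``nondegeneracy'' conditions (nonzero leading coefficients, $\eta$ infinite, $\alpha\ne\beta$) hold simultaneously to be the fiddliest part, but conceptually everything reduces to tracking Archimedean classes of the three competing monomials and invoking Corollary~\ref{3 corollario x^n asymp y^m allora n=m}.
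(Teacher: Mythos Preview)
Your overall architecture matches the paper's proof exactly: reduce to $\alpha^n\asymp\beta^n\gg\gamma^k$, deduce $a+b=0$, write $\beta=\alpha+\eta$ with $\eta\ll\alpha$, expand so that the left-hand side is $\asymp\alpha^{n-1}\eta$, and compare with $\gamma^k$. The gap is in how you propose to finish. You plan to apply Proposition~\ref{2 remark sulla Rado condition degli esponenti} or Theorem~\ref{2 teorema parti standard monomi archimedei} to the pair $\alpha^{n-1}\eta\asymp\gamma^k$, ``after introducing $\eta$ as an auxiliary $\sim$-equivalent quantity, using Lemma~\ref{2 proprietà generali sim}.(3)''. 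But both of those results require all the base variables to be mutually $\sim$-equivalent, and there is no reason for $\eta=|\alpha-\beta|$ to be $\sim$-equivalent to $\alpha,\beta,\gamma$; Lemma~\ref{2 proprietà generali sim}.(3) does not manufacture such an equivalence (it goes the other direction: given $f(\alpha)\sim\beta$ it realises $\beta$ as $f(\gamma)$ for some $\gamma\sim\alpha$). So the Rado-condition-on-exponents route is blocked.

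The paper's fix is cleaner and never touches the $\sim$-class of $\eta$. From $\alpha^{n-1}\eta\asymp\gamma^k$ take integer logarithms to get $(n-1)l(\alpha)+l(\eta)=kl(\gamma)+c$ with $c\in\Z$; since $0\le l(\eta)\le l(\alpha)$ this sandwiches $kl(\gamma)+c$ between $(n-1)l(\alpha)$ and $nl(\alpha)$, whence $l(\alpha)\asymp l(\gamma)$. Now only $\alpha\sim\gamma$ is needed: it gives $l(\alpha)\sim l(\gamma)$, so Theorem~\ref{2 stessa classe e sim allora st=1} yields $st\bigl(l(\alpha)/l(\gamma)\bigr)=1$, and dividing the sandwich by $l(\alpha)$ and taking standard parts gives $n-1\le k\le n$. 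Your ``integer-logarithm computation'' instinct is right; the correct finishing tool is Theorem~\ref{2 stessa classe e sim allora st=1} at the log level, not a Rado-type statement involving $\eta$. (Incidentally, your side claim $\alpha\gg\gamma$ does not follow from $\alpha^n\gg\gamma^k$ when $n>k$, but it is also not needed.)
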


\begin{proof}  By exchanging $x,y$ if necessary, we can assume that  the leading coefficient $a_k$ of $P(z)$ is positive. 

First, notice that by our hypotesis on $n,k$, $a+b=0$ by Theorem \ref{3 Teo pezzo grande omogeneo e Rado}. Therefore we can write our equation as $a(x^{n}-y^{n})=P(z)$.
As we assumed this equation to be PR without constant solutions, there are $\alpha \sim \beta\sim \gamma$ in $^{\ast}\N\setminus \N$ with $\beta>\alpha$ such that $a(\beta^n - \alpha^n) =P(\gamma)=a_k\gamma ^k +\dots + a_0$. As $k\neq n$, the same argument used in Theorem \ref{3 Teo pezzo grande omogeneo e Rado} shows that  $\alpha^n \asymp \beta^n \gg \gamma^k$. In particular, $\alpha \asymp \beta$ and since they are equivalent and $\beta>\alpha$, there must exist $\varepsilon \ll\alpha$ such that $\beta=\alpha + \varepsilon$. Hence:
    $$a(\beta^n - \alpha^n) = a\left(n\alpha^{n-1}\varepsilon \stackrel{\gg}{+} \binom{n}{2} \alpha^{n-2}\varepsilon^2 +\dots\right)= a_k\gamma^k \stackrel{\gg}{+}\dots + a_0$$
    In particular, $\beta^{n}-\alpha^{n}\asymp \alpha^{n-1}\varepsilon $ and $P(\gamma)\asymp \gamma^k$, so $\alpha^{n-1}\varepsilon\asymp \gamma^{k}$. By applying $l$ we deduce that
    $$(n-1)l(\alpha) + l(\varepsilon)= kl(\gamma) + c,$$
    where $c\in \Z$ is a constant. Since $\varepsilon \leq \alpha$,
    \begin{equation}\label{3 eq fra log}
        (n-1)l(\alpha)\leq (n-1)l(\alpha) + l(\varepsilon) =kl(\gamma) + c\leq nl(\alpha).
    \end{equation}
    In particular, $l(\alpha )\asymp l(\gamma)$. As $l(\alpha)\sim l(\gamma)$, by Theorem \ref{2 stessa classe e sim allora st=1} $st\left(\frac{l(\alpha)}{l(\gamma)}\right)=1$. Dividing equation (\ref{3 eq fra log}) by $l(\alpha)$ and taking the standard part, we get $n-1\leq k\leq n$.
\end{proof}

In particular, Theorem \ref{4 th x^n - y^n} gives the following partial answer to Question \ref{quest}:

\begin{coroll}\label{3 coroll quello che sappiamo sulle Fermat}
    Assume that $x^n - y^n= z^k$ is PR. Then at least one of the following conditions holds:
    \begin{enumerate}
        \item $n=1$, or
        \item $n=2,\ k=1,2$, or 
        \item $k=n-1$.
        \end{enumerate}
\end{coroll}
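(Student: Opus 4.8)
The statement to prove is Corollary \ref{3 coroll quello che sappiamo sulle Fermat}, which specializes Theorem \ref{4 th x^n - y^n} to the Fermat--Catalan equation $x^n - y^n = z^k$. The plan is simply to apply Theorem \ref{4 th x^n - y^n} directly, with the right identification of parameters, and then clean up the few boundary cases that theorem does not cover. Concretely, in the notation of Theorem \ref{4 th x^n - y^n} we take $a = 1$, $b = -1$, and $P(z) = z^k$, a polynomial of degree exactly $k$. Theorem \ref{4 th x^n - y^n} requires $n \geq 2$; under that hypothesis it tells us that if the equation is PR (nontrivially) then $n \in \{k, k-1\}$, i.e.\ either $k = n$ or $k = n-1$. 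So for $n \geq 2$ the dichotomy is immediate.

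\textbf{Key steps in order.} First I would dispose of the case $n = 1$: this is condition (1) of the Corollary, so nothing needs to be proven — it is simply listed as one of the allowed possibilities (and indeed it does occur, by the Polynomial van der Waerden Theorem). Second, assume $n \geq 2$ and apply Theorem \ref{4 th x^n - y^n} with $a=1,b=-1, P(z)=z^k$, $\deg P = k$: either the equation has only trivial (constant) solutions monochromatically, in which case it is not PR in the relevant sense and there is nothing to assert, or $n \in \{k, k-1\}$. Third, split $n \in \{k,k-1\}$ into subcases. If $k = n-1$ we land in condition (3). If $k = n$, then for $n = 2$ we land in condition (2) with $k = 2$; for $n \geq 3$ we must note that $x^n - y^n = z^n$ is \emph{not} PR by Fermat's Last Theorem (this is the observation recorded in the bulleted list just before Theorem \ref{4 th x^n - y^n}), so the subcase $k = n \geq 3$ cannot actually occur and need not appear in the conclusion. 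Finally, observe that when $n = 2$, the value $k = n - 1 = 1$ is exactly the remaining half of condition (2), so (2) records precisely the two surviving possibilities $k \in \{1,2\}$ for $n = 2$, while for $n \geq 3$ only $k = n-1$ survives, which is condition (3). Assembling: if $x^n - y^n = z^k$ is (nontrivially) PR then $n = 1$, or $n = 2$ with $k \in \{1,2\}$, or $k = n-1$.

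\textbf{Anticipated obstacle.} There is essentially no mathematical obstacle here: the corollary is a bookkeeping consequence of Theorem \ref{4 th x^n - y^n} together with Fermat's Last Theorem. The only mild subtlety is making sure the ``trivially (i.e.\ with a constant solution)'' caveat in Theorem \ref{4 th x^n - y^n} is handled consistently — one should phrase the corollary's hypothesis as ``$x^n - y^n = z^k$ is PR'' with the same implicit understanding (nontrivially), so that the excluded trivial solutions do not generate a spurious extra case. The other point to state carefully is why $k = n$ with $n \geq 3$ is excluded: this is not part of Theorem \ref{4 th x^n - y^n} (which permits $k = n$) but follows from Fermat's Last Theorem, and it is worth a one-line mention so the reader sees why condition (2) is restricted to $n = 2$.
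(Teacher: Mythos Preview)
Your proposal is correct and matches the paper's (implicit) approach: the corollary is presented there as an immediate specialization of Theorem~\ref{4 th x^n - y^n}, with Fermat's Last Theorem ruling out the subcase $k=n\geq 3$ exactly as you describe. One small wording fix: the literal contrapositive of Theorem~\ref{4 th x^n - y^n} as stated gives $n\in\{k,k-1\}$, i.e.\ $k\in\{n,n+1\}$, not $k\in\{n,n-1\}$ as you write after ``i.e.''; however, the theorem's \emph{proof} and all surrounding text clearly establish $k\in\{n-1,n\}$, so this is a typo in the theorem statement and your intended reading is the right one---just quote it as $k\in\{n-1,n\}$ rather than via ``$n\in\{k,k-1\}$''.
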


So, partition regular Fermat-Catalan equations must have the form 
\begin{equation}\label{equazione di Fermat-Catalan}
    ax^n + b y^n = c z^k, \ \ k\in \{n,n-1\}.
\end{equation}

We study the cases $k=n, k=n-1$ separately.

If $k=n-1$ we know that it must be $b=-a$ by Theorem \ref{3 Teo pezzo grande omogeneo e Rado}. 

\begin{prop}\label{2 prop x^n + y^n = z^n-1 PR sse caso con i coefficenti} Let $a,c\in\N$. The following facts are equivalent:
\begin{enumerate}
    \item $x^n - y^n=z^{n-1}$ is PR;
    \item $a x^n - a y^n= cz^{n -1}$ is PR.
\end{enumerate}
\end{prop}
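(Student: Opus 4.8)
The plan is to prove the equivalence by a scaling argument in the nonstandard framework provided by Theorem~\ref{2 Teo fondamentare PR sse esistono soluzioni equiv}. The direction $(2)\Rightarrow(1)$ is the easy one: if $ax^n - ay^n = cz^{n-1}$ is PR, then by Theorem~\ref{2 Teo fondamentare PR sse esistono soluzioni equiv} there are $\alpha\sim\beta\sim\gamma$ with $a\alpha^n - a\beta^n = c\gamma^{n-1}$; the idea is to rescale, replacing $(\alpha,\beta,\gamma)$ by integer approximations of $(s\alpha, s\beta, t\gamma)$ for suitable positive rationals $s,t$ chosen so that the constants $a$ and $c$ are absorbed, and then invoke Theorem~\ref{2 Teo fondamentare PR sse esistono soluzioni equiv} in the other direction. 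Concretely, I would look for $s,t$ with $a s^n = t^{n-1}$ (and an overall common factor so that the scaling is by a rational), reducing $a(s\alpha)^n - a(s\beta)^n = a s^n(\alpha^n-\beta^n)$ to $t^{n-1}(\alpha^n-\beta^n)$ while the right side $c\gamma^{n-1}$ becomes $ct^{-(n-1)}(t\gamma)^{n-1}$, so I also need $ct^{-(n-1)}$ to come out to the right constant — meaning $s,t$ must solve $as^n = c t^{n-1}$ together with whatever normalization makes both sides simultaneously a clean equation. Since $s,t$ range over positive rationals, such a solution always exists (e.g.\ take $s,t$ to be appropriate rational powers, clearing denominators). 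Because multiplying by a fixed standard positive rational and taking the floor is a standard function $\N\to\N$, Lemma~\ref{2 proprietà generali sim}.(1) preserves $\sim$-equivalence, and Corollary~\ref{2 Prendere approssimazioni non cambia i monomi} together with the fact that floors of $s\alpha$ etc.\ are approximations (Remark~\ref{1 rmrk le teste approssimano il numero}) guarantees the monomials are unchanged up to a $\ll$-small error; one must check the error term in $x^n-y^n$ does not destroy the equality — but here the equation is an exact algebraic identity after scaling, so one works directly with $\lfloor s\alpha\rfloor$ and checks the residual is an infinitesimal fraction that, being an integer, forces exact equality, or more cleanly chooses the scaling rational with denominator dividing everything so no floor is needed.

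The direction $(1)\Rightarrow(2)$ is entirely symmetric: the same rescaling in reverse turns a solution of $x^n-y^n=z^{n-1}$ into one of $ax^n-ay^n=cz^{n-1}$. So the real content is just the existence of the scaling rationals, which is elementary number theory (solvability of $as^n = ct^{n-1}$ in positive rationals, which always holds).

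The main obstacle, and the step to be careful about, is the floor/approximation bookkeeping: after scaling $\alpha$ by a rational $p/q$, the natural candidate $\lfloor (p/q)\alpha\rfloor$ differs from $(p/q)\alpha$ by a bounded (hence infinitesimal relative to $\alpha$) amount, but plugging these floors into $x^n-y^n$ produces error terms of order $\alpha^{n-1}$, which is \emph{not} negligible and which must be matched against the corresponding error in $c\gamma^{n-1}$. The clean way around this is to avoid floors altogether: choose the scaling factor so that it multiplies $\alpha,\beta,\gamma$ into honest hypernaturals. More precisely, one can first replace the original solution $(\alpha,\beta,\gamma)$ by $(\lambda\alpha,\lambda\beta,\lambda\gamma)$ for a large standard integer $\lambda$ divisible by all relevant denominators (this is legitimate since $x\mapsto \lambda x$ is standard and the equation is homogeneous of the same degree on each side only after we know $k=n-1$ — wait, it is not homogeneous, so instead scale $x,y$ and $z$ by possibly different standard integers $\mu$ and $\nu$ with $a\mu^n = \nu^{n-1}\cdot(\text{const})$). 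Picking $\mu,\nu\in\N$ large enough and compatible, $\mu\alpha,\mu\beta,\nu\gamma$ are genuine hypernaturals, still pairwise $\sim$-equivalent would \emph{fail} (scaling by different constants breaks equivalence!) — so one cannot scale $x,y$ and $z$ independently. This is the crux: one must scale all three by the \emph{same} rational $\rho$, which forces $a\rho^n(\alpha^n-\beta^n) = c\rho^{n-1}\gamma^{n-1}$, i.e.\ the scaled triple solves $ax^n-ay^n = (c/\rho)z^{n-1}$; so one wants $c/\rho$ to equal $c$, impossible unless $\rho=1$. The resolution is that the two equations in the statement have the \emph{same} $n$ but the claim is about existence of \emph{some} $a,c$ — rereading, the proposition fixes $a,c\in\N$ and asserts equivalence with the unit-coefficient case, so the correct approach is: from a solution of $x^n-y^n=z^{n-1}$ scaled by $\rho$ we get $\rho^n(\alpha^n-\beta^n)=\rho^{n-1}\gamma^{n-1}$; multiply the equation through to match $ax^n-ay^n=cz^{n-1}$ by choosing $\rho$ with $a\rho^n$ and $c\rho^{n-1}$ proportional in the right ratio, i.e.\ $\rho = c/a$ up to the degree bookkeeping — precisely $\rho$ such that $a\rho = c$ would give $a\rho^n(\alpha^n-\beta^n) = c\rho^{n-1}(\cdots)$, matching if $a\rho = c$, i.e.\ $\rho = c/a$. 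Then $(\rho\alpha,\rho\beta,\rho\gamma)$, suitably floored with the error absorbed by the integrality trick (scale first by a common denominator so $\rho\alpha$ is a hypernatural), solves $ax^n-ay^n=cz^{n-1}$, and conversely. So the genuine hard point is simply arranging $\rho=c/a$ to act on hypernaturals without floors, which one does by pre-multiplying the original solution by the standard integer $a$ (legitimate, preserves $\sim$) so that $\rho\cdot(a\alpha) = c\alpha$ is a hypernatural; then Theorem~\ref{2 Teo fondamentare PR sse esistono soluzioni equiv} closes both directions.
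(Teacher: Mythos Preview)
Your identification of the scaling factor $\rho=c/a$ is correct, and you correctly diagnose the real obstacle: one must scale all three variables by the \emph{same} rational to preserve $\sim$-equivalence, and that rational is forced to be $c/a$, so the whole proof reduces to showing that $\tfrac{c}{a}\alpha,\tfrac{c}{a}\beta,\tfrac{c}{a}\gamma\in\s\N$. But your proposed fix fails. Pre-multiplying $(\alpha,\beta,\gamma)$ by $a$ does preserve $\sim$, but $(a\alpha,a\beta,a\gamma)$ no longer solves $x^{n}-y^{n}=z^{n-1}$: the two sides have different degrees, so $(a\alpha)^{n}-(a\beta)^{n}=a^{n}\gamma^{n-1}=a\cdot(a\gamma)^{n-1}$, i.e.\ you have produced a solution of $x^{n}-y^{n}=az^{n-1}$ instead. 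Composing with $\rho=c/a$ then gives $(c\alpha,c\beta,c\gamma)$, and a direct check shows this solves $ax^{n}-ay^{n}=cz^{n-1}$ only when $a=1$. No uniform integer pre-scaling can repair this, again because of the degree mismatch; so the integrality issue is genuine and not a bookkeeping detail.

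The missing ingredient, which the paper supplies, is that any triple $\alpha\sim\beta\sim\gamma$ in $\s\N$ solving $\beta^{n}-\alpha^{n}=\gamma^{n-1}$ is automatically divisible by every standard $m\in\N$. Since $\sim$-equivalent hypernaturals have the same residue modulo every standard modulus (apply Lemma~\ref{2 proprietà generali sim}.(1),(4) to the residue map), one has $\alpha\equiv\beta\pmod{m^{n-1}}$, whence $m^{n-1}\mid(\beta^{n}-\alpha^{n})=\gamma^{n-1}$ and thus $m\mid\gamma$; by $\sim$-equivalence again, $m\mid\alpha$ and $m\mid\beta$. With this in hand, $\alpha'=\tfrac{c}{a}\alpha$, $\beta'=\tfrac{c}{a}\beta$, $\gamma'=\tfrac{c}{a}\gamma$ are honest hypernaturals, still pairwise $\sim$-equivalent (the map $x\mapsto cx/a$ is standard on the set of multiples of $a$), and solve $ax^{n}-ay^{n}=cz^{n-1}$. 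The converse direction uses the same divisibility argument with $\rho=a/c$.
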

\begin{proof}
$(1)\Rightarrow (2)$ Under our hypothesis, by Theorem \ref{2 Teo fondamentare PR sse esistono soluzioni equiv} there are $\alpha\sim \beta \sim\gamma$ such that $\beta ^n - \alpha^n =\gamma^{n-1}$. First, let notice that $\alpha,\beta$ and $\gamma$ are divisible by every natural $m\in \N$. Indeed, since they are equivalent, $\alpha\equiv\beta\equiv\gamma $ modulo $m^{n-1}\cdot c$ for every $m$, and in particular $c\gamma^{n-1}=a(\alpha^n - \beta^n) \equiv 0$ modulo $m^{n-1}\cdot c$: so $m^{n-1}\cdot c$ must divide $c\gamma^{n-1}$ and hence $m$ divides $\gamma$ (and $\alpha$ and $\beta$). 

    But then $\alpha'=\frac{c}{a}\alpha\sim \beta' =\frac{c}{a}\beta\sim\gamma'=\frac{c}{a}\gamma$ in $\s\N$ solve $a x^n - a y^n= cz^{n -1}$, which is then PR due to Theorem \ref{2 Teo fondamentare PR sse esistono soluzioni equiv}. 

$(2)\Rightarrow (1)$ This is similar: arguing as above, we find $\alpha\sim\beta\sim\gamma\in\s\N$ such that $a\alpha^{n}-a\beta^{n}=c\gamma^{n-1}$. Reasoning similarly as above, we deduce that $\alpha,\beta,\gamma$ must are divisible by every natural $m\in \N$. Now take $\alpha^{\prime}=\frac{a}{c}\alpha\sim\beta^{\prime}=\frac{a}{c}\beta\sim\gamma^{\prime}=\frac{a}{c}\gamma$ and observe that they solve $x^{n}-y^{n}=z^{n-1}$ to conclude, again by Theorem \ref{2 Teo fondamentare PR sse esistono soluzioni equiv}.
\end{proof}

Hence, the case $k=1$ is reduced to the study on the equation $x^{n}-y^{n}=z^{n-1}$.

If $k=n$, again by Theorem \ref{3 Teo pezzo grande omogeneo e Rado} we have two cases (up to symmetries): $a + b = c$ or $a+ b =0$. 

\begin{prop} Let $a,b,c,n\in\N$. The following two facts hold:
    
    \begin{enumerate}
        \item If $a + b = c$, then $ax^{n}+by^{n}=cz^{n}$ is trivially PR;
        \item If $a + b =0$ and $n>3$, then $ax^{n}+by^{n}=cz^{n}$ is not PR.
    \end{enumerate}
\end{prop}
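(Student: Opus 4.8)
The first statement is immediate: if $a+b=c$, then for every $t\in\N$ the constant triple $x=y=z=t$ satisfies $at^{n}+bt^{n}=(a+b)t^{n}=ct^{n}$, and since singletons are monochromatic under any coloring, such a constant monochromatic solution always exists. For the second statement the plan is to run the strategy of Theorem \ref{4 th x^n - y^n}, now at the critical exponent $k=n$. Since $a+b=0$, after absorbing signs (and possibly swapping $x,y$) the equation takes the form $a(x^{n}-y^{n})=cz^{n}$ with $a,c\in\N$, and it has no constant solution. Assuming it were PR, Theorem \ref{2 Teo fondamentare PR sse esistono soluzioni equiv} gives $\alpha\sim\beta\sim\gamma\in\s\N$ with $a(\alpha^{n}-\beta^{n})=c\gamma^{n}$; since the right-hand side is positive, $\alpha\neq\beta$, say $\alpha>\beta$, and then $\alpha,\beta,\gamma$ are all infinite by Lemma \ref{2 proprietà generali sim}. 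I would then distinguish cases according to the Archimedean classes of $\alpha,\beta,\gamma$.

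If $\alpha\gg\beta$, then $\alpha^{n}-\beta^{n}\asymp\alpha^{n}$ by Lemma \ref{3 Lemma robette}.(1), so $\gamma^{n}\asymp\alpha^{n}$ and $\gamma\asymp\alpha$ by Lemma \ref{3 Lemma robette}.(4); dividing by $\alpha^{n}$ and taking standard parts, with $st(\beta/\alpha)=0$ and $st(\gamma/\alpha)=1$ (the latter by Theorem \ref{2 stessa classe e sim allora st=1}), yields $a=c$, hence $\alpha^{n}=\beta^{n}+\gamma^{n}$, which is impossible for $n\ge 3$ by the transfer of Fermat's Last Theorem. If $\alpha\asymp\beta$, then $st(\beta/\alpha)=1$ by Theorem \ref{2 stessa classe e sim allora st=1}, so $\beta=\alpha-\varepsilon$ with $0<\varepsilon\ll\alpha$ and $\varepsilon$ infinite, and since the leading term of the binomial expansion dominates, $\alpha^{n}-\beta^{n}\asymp\alpha^{n-1}\varepsilon$, whence $\alpha^{n-1}\varepsilon\asymp\gamma^{n}$. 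Here $\gamma\asymp\alpha$ is impossible, since it would force $\alpha^{n-1}\varepsilon\asymp\alpha^{n}$, i.e. $\varepsilon\asymp\alpha$ by Lemma \ref{3 Lemma robette}.(3); and $\gamma\gg\alpha$ is impossible since $\varepsilon\ll\alpha$ gives $\alpha^{n-1}\varepsilon\ll\alpha^{n}\ll\gamma^{n}$.

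The remaining sub-case $\alpha\asymp\beta\gg\gamma$ is where I expect the main difficulty to lie. One first checks $\varepsilon\ll\gamma\ll\alpha$ (both $\varepsilon\gg\gamma$ and $\varepsilon\asymp\gamma$ contradict $\alpha^{n-1}\varepsilon\asymp\gamma^{n}$, exactly as in the previous paragraph). Applying the integer logarithm to $\alpha^{n-1}\varepsilon\asymp\gamma^{n}$ gives $(n-1)l(\alpha)+l(\varepsilon)=nl(\gamma)+c_{1}$ for some $c_{1}\in\Z$ (Proposition \ref{3 stessa classe sse log a distanza finita}), and from $0\le l(\varepsilon)\le l(\alpha)$ one gets $l(\alpha)$ trapped between $l(\gamma)$ and $\tfrac{n}{n-1}l(\gamma)$ up to an additive constant, so $l(\alpha)\asymp l(\gamma)$; since $l(\alpha)\sim l(\gamma)$ by Lemma \ref{2 proprietà generali sim}.(1), Theorem \ref{2 stessa classe e sim allora st=1} forces $st(l(\alpha)/l(\gamma))=1$. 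In Theorem \ref{4 th x^n - y^n} the hypothesis $k\notin\{n-1,n\}$ turned this step into an outright contradiction; with $k=n$ the bounds are merely consistent, so the argument must be iterated — one applies $l$ again to the $\asymp$-relations just obtained, uses repeatedly that $\sim$-equivalent hypernaturals at finite distance coincide (Lemma \ref{2 proprietà generali sim}.(2)) to pin down the higher integer logarithms of $\alpha,\gamma,\varepsilon$, and squeezes out a contradiction. I expect $n>3$ (equivalently $n-1\ge 3$) to be exactly what makes this last step fail; making the iteration precise is the only genuinely delicate point, the rest being a routine adaptation of the tools developed in Sections 1 and 2.
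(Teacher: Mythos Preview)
Your argument for part~(1) is correct and matches the paper. For part~(2), however, there is a genuine gap: the ``hard'' sub-case $\alpha\asymp\beta\gg\gamma$ cannot be closed by the asymptotic/logarithmic machinery you are invoking. The very proof of Theorem~\ref{4 th x^n - y^n} shows that these methods yield only $n-1\le k\le n$, so $k=n$ is perfectly consistent with all the $\asymp$-constraints; nothing further is squeezed out by iterating $l$. Concretely, your iteration would need $\sim$-equivalence to feed into Theorem~\ref{2 stessa classe e sim allora st=1} at the next level, but $\varepsilon=\alpha-\beta$ is not $\sim$-equivalent to $\alpha,\beta,\gamma$ (nor is $l(\varepsilon)$ to $l(\alpha),l(\gamma)$), so the leverage disappears. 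At the level of Proposition~\ref{1 Teorema sulle teste}, the maximal piece $P_{\max}(x)=ax^{n}-ax^{n}$ vanishes identically, confirming that the head-analysis imposes no constraint here. Your hope that ``$n>3$ is exactly what makes this last step fail'' is not substantiated.

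The paper's proof is entirely different and brings in a hard number-theoretic input: the Darmon--Granville theorem, which says that for integers $A,B,C$ and exponents with $\tfrac{1}{p}+\tfrac{1}{q}+\tfrac{1}{r}<1$ the equation $Ax^{p}+By^{q}=Cz^{r}$ has only finitely many coprime solutions. With $p=q=r=n$ this is exactly the condition $n>3$. Given $\alpha\sim\beta\sim\gamma$ infinite solving $a\alpha^{n}+b\beta^{n}=c\gamma^{n}$, the triple $(\alpha,\beta,\gamma)$ must then be a scalar multiple of one of finitely many standard primitive solutions $(r,s,t)$, so $\alpha=r\mu$, $\beta=s\mu$, $\gamma=t\mu$ for some $\mu$; since $r\mu\sim s\mu\sim t\mu$, Lemma~\ref{2 proprietà generali sim}.(2) forces $r=s=t$, i.e.\ a constant solution, contradicting $a+b=0\neq c$. (Your use of Fermat's Last Theorem in the sub-case $\alpha\gg\beta$ already hints that deep arithmetic, not asymptotics, is what is really needed.)
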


\begin{proof} (1) This is immediate, as the equation admits constant solutions.

(2) We will use a classical result of Darmon and Granville about the number of solutions of Fermat-Catalan equations (\cite{darmon1995equations}, Theorem 2), stating that for all integers $A,B,C$ and $p,q,r$ satisfying $\frac{1}{p} + \frac{1}{q} + \frac{1}{r}< 1$, the equation $$Ax^ p + B y^q = Cz^r$$ has only a finite number of solutions $(x,y,z)$ with $\gcd(x,y,z)=1$.

Now suppose that $ax^{n}+by^{n}=cz^{n}$ is PR, and by Theorem \ref{2 Teo fondamentare PR sse esistono soluzioni equiv} let $\alpha\sim\beta\sim\gamma $ be infinite numbers such that $a\alpha^n + b\beta^n= c\gamma^n$. Since there is only a finite number of solutions with $\gcd=1$, $(\alpha,\beta,\gamma)$ must be a multiple of one of those irreducible solutions, say $(r,s,t)\in \N^3$. So $\alpha=r \mu \sim \beta=s\mu \sim \gamma=t\mu$: but this is impossible, unless $r=s=t=1$, because of Lemma \ref{2 proprietà generali sim}.(2). As $a + b=0\neq 0$, $1$ is not a constant solution of our equations, so we reach a contradiction.  
\end{proof}

Minor modifications to the proof of Proposition \ref{4 th x^n - y^n} give the following generalization.

\begin{coroll}
    Let $P_1(z_1),\dots, P_k(z_k)$ be polynomials of different degrees, and $n\notin \{\deg(P_i),\deg(P_i)-1\mid 1\leq i\leq k\}$. Then $ax^n + by^n = \sum_{i=1}^{k} P_i(z_i)$ is not PR. 
\end{coroll}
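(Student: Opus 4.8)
The plan is to mimic the proof of Theorem~\ref{4 th x^n - y^n} almost verbatim, tracking the several right-hand side polynomials separately. Suppose the equation $ax^n + by^n = \sum_{i=1}^k P_i(z_i)$ is PR nontrivially. By Theorem~\ref{2 Teo fondamentare PR sse esistono soluzioni equiv} there are $\alpha \sim \beta \sim \gamma_1 \sim \dots \sim \gamma_k$ in $\s\N \setminus \N$, say with $\beta > \alpha$, solving $a\alpha^n + b\beta^n = \sum_{i=1}^k P_i(\gamma_i)$. Since all the degrees $\deg(P_i)$ and all of $n$ are distinct, Proposition~\ref{2 remark sulla Rado condition degli esponenti} (applied, via the same reasoning as in Theorem~\ref{3 Teo pezzo grande omogeneo e Rado}, to the leading monomials of each $P_i$ and to $x^n, y^n$) forces $a+b = 0$: indeed, each $P_i(\gamma_i) \asymp \gamma_i^{\deg P_i}$ sits in its own Archimedean class, $\alpha^n \asymp \beta^n$ must dominate them all if $n$ exceeds every $\deg P_i$, but then, dividing by $\alpha^n$ and taking standard parts, we would get $a + b = 0$ anyway; and if some $\deg P_i > n$ then $\gamma_i^{\deg P_i}$ would be the unique strictly largest term, contradicting that the equation holds. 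So $a + b = 0$, write the equation as $a(\beta^n - \alpha^n) = \sum_i P_i(\gamma_i)$.

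Next I would identify the Archimedean-largest term on the right. The terms $P_i(\gamma_i)$ lie in distinct Archimedean classes (all $\gamma_i$ are $\sim$-equivalent, the degrees differ, so Corollary~\ref{3 corollario x^n asymp y^m allora n=m} and Remark~\ref{3 rmrk 2 variabili}.(1) separate them); hence $\sum_i P_i(\gamma_i)$ is Archimedean-equivalent to the single term $P_j(\gamma_j)$ of largest degree, call it $d = \deg P_j = \max_i \deg P_i$, so $\sum_i P_i(\gamma_i) \asymp \gamma_j^d$. On the left, writing $\beta = \alpha + \varepsilon$ with $\varepsilon \ll \alpha$ (possible since $\alpha \asymp \beta$ — which follows from $\alpha^n \asymp \beta^n$ and Lemma~\ref{3 Lemma robette}.(4) — and $\alpha \sim \beta$, $\beta > \alpha$, via Lemma~\ref{2 proprietà generali sim}.(2)), the binomial expansion gives $\beta^n - \alpha^n = n\alpha^{n-1}\varepsilon + \binom{n}{2}\alpha^{n-2}\varepsilon^2 + \dots \asymp \alpha^{n-1}\varepsilon$, the first term dominating the rest since $\varepsilon \ll \alpha$. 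Therefore $\alpha^{n-1}\varepsilon \asymp \gamma_j^{d}$.

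Now apply the integer logarithm $l = l_2$ and Proposition~\ref{3 stessa classe sse log a distanza finita}.(3): $(n-1)l(\alpha) + l(\varepsilon) = d\, l(\gamma_j) + c$ for some $c \in \Z$. Since $1 \leq \varepsilon \leq \alpha$ we have $0 \leq l(\varepsilon) \leq l(\alpha)$, so
\[
(n-1)l(\alpha) \leq d\, l(\gamma_j) + c \leq n\, l(\alpha).
\]
In particular $l(\alpha) \asymp l(\gamma_j)$, and since $l(\alpha) \sim l(\gamma_j)$ by Lemma~\ref{2 proprietà generali sim}.(1), Theorem~\ref{2 stessa classe e sim allora st=1} gives $st(l(\gamma_j)/l(\alpha)) = 1$. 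Dividing the displayed inequality by $l(\alpha)$ (which is infinite) and taking standard parts yields $n - 1 \leq d \leq n$, i.e. $d \in \{n, n-1\}$, contradicting the hypothesis that $n \notin \{\deg P_i, \deg P_i - 1\}$ for every $i$ (in particular for $i = j$).

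The main obstacle I anticipate is the bookkeeping in the first step: one must be careful that with $a + b$ possibly nonzero a priori, the largest monomial among $\{a\alpha^n, b\beta^n\}$ could a priori cancel, so the clean statement ``$\alpha^n \asymp \beta^n$ dominates'' needs the $a+b=0$ conclusion, which in turn needs that no $P_i$ has degree exceeding $n$ — this is exactly where the hypothesis $n \notin \{\deg P_i, \deg P_i - 1\}$ is used in two different ways (to rule out $\deg P_i > n$ giving a lone largest term, and at the end to contradict $d \in \{n, n-1\}$). Packaging this so that the case analysis is clean — rather than sprawling — is the only delicate point; everything else is a routine transcription of the proof of Theorem~\ref{4 th x^n - y^n}.
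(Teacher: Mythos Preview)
Your approach matches the paper's, and the core argument from the binomial expansion onward is correct. Two slips are worth flagging.

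First, the case split in your opening paragraph (on whether $n$ exceeds all the $\deg P_i$) is both unnecessary and unjustified. The claims ``$\alpha^n \asymp \beta^n$ must dominate them all if $n$ exceeds every $\deg P_i$'' and ``if some $\deg P_i > n$ then $\gamma_i^{\deg P_i}$ would be the unique strictly largest term'' do not follow from degree comparison alone: you have no a priori control on the relative Archimedean sizes of $\alpha$ and the $\gamma_i$, so for instance $\gamma_i^{d_i}\gg\alpha^n$ is perfectly possible even when $d_i<n$. The correct argument (which the paper obtains simply by invoking Theorem~\ref{3 Teo pezzo grande omogeneo e Rado}) is degree-comparison-free: by Corollary~\ref{3 corollario x^n asymp y^m allora n=m}, among the leading monomials $\alpha^n,\beta^n,\gamma_1^{d_1},\dots,\gamma_k^{d_k}$ the only pair that can share an Archimedean class is $(\alpha^n,\beta^n)$; since a lone dominant term would make the equation fail, the maximal class must be exactly $\{\alpha^n,\beta^n\}$, whence $a+b=0$ and $\alpha^n\asymp\beta^n\gg\gamma_i^{d_i}$ for all $i$.

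Second, your claim that the Archimedean-largest $P_j(\gamma_j)$ is the one ``of largest degree, $d=\max_i\deg P_i$'' is unjustified for the same reason --- but it is also harmless, since the rest of your argument only uses that $d=\deg P_j$ for \emph{some} $j$, and the contradiction $d\in\{n,n-1\}$ against the hypothesis works for any such $j$. The paper's proof accordingly just takes $j$ to index the Archimedean-largest term without claiming anything about which degree that is.
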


\begin{proof} By contrast, assume the equation $ax^n + by^n = \sum P_i(z_i)$ to be PR. Again by Theorem \ref{3 Teo pezzo grande omogeneo e Rado} we deduce that $a + b =0$, and by Theorem \ref{2 Teo fondamentare PR sse esistono soluzioni equiv}, let $\alpha,\beta,\gamma_{1},\dots,\gamma_{k}$ be $\sim$-equivalent numbers such that 
\begin{equation}\label{eq facile}a(\alpha^{n}-\beta^{n})=\sum_{i=1}^{k}P_{i}\left(\gamma_{i}\right).\end{equation}

Our hypotheses ensure that $\alpha^{n}\asymp\beta^{n}\gg P_{i}\left(\gamma_{i}\right)$ for all $i\leq k$. Write $\alpha=\beta + \varepsilon$, where $\beta\gg \varepsilon$.
   
   As the degrees of the $P_{i}'s$ are all different, necessarily for $i\neq j$ $P_i(\gamma_i)\not\asymp P_j(\gamma_j)$: in fact, $P_i (\gamma_i)\asymp \gamma_i ^{\deg(P_i)}\not\asymp \gamma_j^{\deg(P_j)}\asymp P_j(\gamma_j)$ since $\gamma_i\sim \gamma_j$.

   In particular, the right hand side of equation (\ref{eq facile}) will be asymptotic to its largest monomial $\gamma_i^{\deg(P_i)}$. We can now conclude by the same argument used in the proof of Theorem \ref{4 th x^n - y^n}. 
\end{proof}

\begin{example} $x^2 - y^2 = z_1^4 + z_2^5 - 3z_3^6 $ and $x^5 - y^5 = z_1^3 - z_2^3$ are not PR. \end{example}

With a slightly more subtle argument, we can also obtain the following result, having the same spirit.

\begin{coroll}
    Let $n\geq 2$. Let $\{k_{1},\dots,k_{n}\}$ be such that no finite sum of the $k_i$ is equal to $n$ or $n-1$. Then the equation $x^n - y^n =  \prod_{i=1}^{m} z_i^{k_i}$ is not PR.
\end{coroll}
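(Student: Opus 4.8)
The plan is to run the proof of Theorem~\ref{4 th x^n - y^n} essentially verbatim, with the single tail monomial $\prod_{i=1}^{m} z_i^{k_i}$ playing the role of $P(z)$; the only genuinely new ingredient will be bookkeeping of which of the variables $z_i$ end up in the top Archimedean class once a solution is fixed. So suppose for contradiction that $x^n-y^n=\prod_{i=1}^{m} z_i^{k_i}$ is PR. First I would observe that this equation has no constant solution (a constant $a$ would force $a^{\sum k_i}=0$, impossible in $\N$), so by Theorem~\ref{2 Teo fondamentare PR sse esistono soluzioni equiv} together with Lemma~\ref{2 proprietà generali sim}.(4) there are \emph{infinite} $\alpha\sim\beta\sim\gamma_1\sim\dots\sim\gamma_m\in\s\N$ with $\alpha^n-\beta^n=\prod_{i=1}^m\gamma_i^{k_i}$. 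Since the right-hand side is positive and $\alpha\sim\beta$, Lemma~\ref{2 proprietà generali sim}.(2) forces $\alpha>\beta$ with $\alpha-\beta$ infinite, hence either $\alpha\gg\beta$ or $\alpha\asymp\beta$.

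The case $\alpha\gg\beta$ I would dispose of in one line: there $\alpha^n-\beta^n\asymp\alpha^n$ by Lemma~\ref{3 Lemma robette}, so $\alpha^n\asymp\prod_i\gamma_i^{k_i}$ with $\alpha\sim\gamma_i$ for every $i$, and Proposition~\ref{2 remark sulla Rado condition degli esponenti} then yields a nonempty $J\subseteq\{1,\dots,m\}$ with $\sum_{j\in J}k_j=n$, contradicting the hypothesis. So I may assume $\alpha\asymp\beta$; by Theorem~\ref{2 stessa classe e sim allora st=1} this gives $st(\alpha/\beta)=1$, hence $\varepsilon:=\alpha-\beta\ll\beta$ is infinite. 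Exactly as in Theorem~\ref{4 th x^n - y^n}, expanding $\alpha^n-\beta^n=\sum_{j=1}^n\binom{n}{j}\beta^{n-j}\varepsilon^j$ and using $\varepsilon\ll\beta$ (so the $j=1$ term dominates, as $n\ge2$) together with Lemma~\ref{3 Lemma robette} gives $\alpha^n-\beta^n\asymp\beta^{n-1}\varepsilon$, whence $\beta^{n-1}\varepsilon\asymp\prod_{i=1}^m\gamma_i^{k_i}$. Applying the integer logarithm $l=l_2$ and Proposition~\ref{3 stessa classe sse log a distanza finita}.(3), this becomes
\[(n-1)\,l(\beta)+l(\varepsilon)=\sum_{i=1}^m k_i\,l(\gamma_i)+c\]
for some $c\in\Z$, with $1\le l(\varepsilon)\le l(\beta)$.

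The heart of the matter, and the one step requiring a little more care than in Theorem~\ref{4 th x^n - y^n}, is the Archimedean analysis of this identity, since the $\gamma_i$ (hence the $l(\gamma_i)$), though all $\sim$-equivalent to $\beta$, need not all lie in a single Archimedean class. Writing $\lambda:=l(\beta)$ and $\lambda_i:=l(\gamma_i)$, I note $\lambda_i\sim\lambda$ by Lemma~\ref{2 proprietà generali sim}.(1), and that $n\ge2$ makes the left-hand side $\asymp\lambda$, so $\sum_i k_i\lambda_i\asymp\lambda$. Positivity of all $k_i$ and $\lambda_i$ then excludes $\lambda_i\gg\lambda$ for any $i$ (this would make the sum $\gg\lambda$) and forces the set $T:=\{i:\lambda_i\asymp\lambda\}$ to be nonempty (otherwise the sum is $\ll\lambda$), with $\lambda_i\ll\lambda$ for $i\notin T$. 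Dividing the displayed identity by $\lambda$ and taking standard parts kills the $c$-term and the $i\notin T$ terms, and Theorem~\ref{2 stessa classe e sim allora st=1} (applicable since for $i\in T$ we have both $\lambda_i\sim\lambda$ and $\lambda_i\asymp\lambda$) gives $st(\lambda_i/\lambda)=1$; what remains is
\[\sum_{i\in T}k_i=(n-1)+st\!\left(\frac{l(\varepsilon)}{\lambda}\right).\]
Since $0\le l(\varepsilon)\le\lambda$, the right-hand side lies in $[n-1,n]$, so the integer $\sum_{i\in T}k_i$ equals $n-1$ or $n$; as $T\neq\emptyset$ this is a nonempty sum of the $k_i$ hitting a forbidden value, contradiction.

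I expect essentially everything to be a direct transcription of the argument for Theorem~\ref{4 th x^n - y^n}; the only delicate point is the isolation of the index set $T$ of variables whose logarithm sits in the top Archimedean class, and the check that cancellation among the lower-order terms $k_i\lambda_i$ ($i\notin T$) cannot spoil the estimate — which is exactly where the positivity of the $k_i$ and the relation $\lambda_i\sim\lambda$ (via Theorem~\ref{2 stessa classe e sim allora st=1}) are used. A cosmetic remark: the same proof works, with $a+b=0$ established first, for $ax^n+by^n=\prod z_i^{k_i}$, but I would state it only in the form given.
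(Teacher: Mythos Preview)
Your proof is correct and follows essentially the same route as the paper's: rule out $\alpha^n\asymp\prod\gamma_i^{k_i}$ via Proposition~\ref{2 remark sulla Rado condition degli esponenti}, write $\alpha=\beta+\varepsilon$ with $\varepsilon\ll\beta$, pass to logarithms, and read off that a nonempty subset of the $k_i$ sums to $n-1$ or $n$. Your explicit isolation of the index set $T=\{i:l(\gamma_i)\asymp l(\beta)\}$ and the direct appeal to Theorem~\ref{2 stessa classe e sim allora st=1} are in fact a bit more careful than the paper's corresponding step (which asserts ``$l(\gamma_i)\le l(\alpha)$ for all $i$'' without justification and then invokes Theorem~\ref{2 teorema parti standard monomi archimedei}); your version makes transparent exactly where positivity of the $k_i$ is used to prevent cancellation.
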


\begin{proof}
    Suppose the equation to be PR, and let $\alpha\sim \beta \sim \gamma_1\sim\dots\sim \gamma_k$ be a solution in $\s\N$. As usual, we look for monomials in the largest class.
    
    By Proposition \ref{2 remark sulla Rado condition degli esponenti}, since exponents of the right-hand side do not sum in any way to $n$, it must be $\alpha^n\asymp \beta^n\gg \prod_{i=1}^{m} \gamma_i^{k_i}$. In particular, $\alpha\asymp \beta$ and so we can write $\alpha=\beta + \varepsilon$ with $\varepsilon\ll \beta$. So:
    $$\alpha^n - \beta^n= n \alpha^{n -1} \varepsilon + \binom{n}{2}\alpha^{n-2}\varepsilon^2 + \dots = \prod_{i=1}^m \gamma_i^{k_i},$$ 
    and in particular 
    $$\alpha^{n-1}\varepsilon \leq \prod_{i=1}^m \gamma_i^{k_i} \leq \alpha^n. $$
    By taking the logarithm we arrive to
    $$(n-1)l(\alpha) + l(\varepsilon) \leq l\left(\prod_{i=1}^m \gamma_i^{k_i}\right)=\sum_{i=1}^m k_i l(\gamma_i)\leq nl(\alpha).$$
    Hence, as $l(\varepsilon)\leq l(\alpha)$, we get that \begin{equation}\label{eq:sumapr} \sum_{i=1}^m k_i l(\gamma_i)\asymp l(\alpha).\end{equation} As $l(\gamma_{i})\leq l(\alpha)$ for all $i\leq m$, by equation (\ref{eq:sumapr}) we have that $l(\gamma_{i})\asymp l(\alpha)$ for some $i\leq m$. As $l(\alpha)\sim l(\gamma_i)$, by dividing by $l(\alpha)$ and taking the standard part by Theorem \ref{2 teorema parti standard monomi archimedei} we obtain 
    $$n-1 \leq \sum_{i\in I} k_i\leq n$$ for some $I\subseteq \{1,\dots,m\}$, which is against our assumption. So the polynomial cannot be PR.
\end{proof}

\begin{example}  $x^4 - y^4 = z_1z_2$ is not PR. \end{example}

\section{Ultrafilters and Archimedean classes}

We already mentioned the natural relation between numbers in hyperextensions and ultrafilters: it should not be a surprise, then, that we can use Archimedean classes to solve problems naturally arising in the ultrafilter context. In this section we will present an example (namely, equations in $\beta\N$), and introduce the equivalent notion of Archimedean closeness for ultrafilters. But first, for the sake of completion, let us recall the link between ultrafilters and elementary extensions.

\subsection{Preliminaries: ultrafilters and hyperextensions}\label{sec:ultrafilters and hyperextensions}

 We assume the reader to be familiar with the algebra of ultrafilters: if not, we refer to the monograph \cite{hindman2011algebra}; we also refer to \cite{di2015hypernatural} for a more extensive treatment of the nonstandard approach that we summarize here. The crucial point in the nonstandard approach to ultrafilters is the following translation.

\begin{rmrk}
    Let $\U$ be an ultrafilter over $\N$ (or $\R$, say). Since $\U$ has the Finite Intersection Property, by saturation $$\bigcap_{A\in \U} \s A \not=\emptyset.$$ 
    So in $\s\N$ we can find generators for $\U$.
\end{rmrk}

The $u$-equivalence relation can be equivalently characterized in terms of ultrafilters, as seen in Section \ref{sec:nsa}. 

When working in dimension higher than one, and when dealing with operations with ultrafilters, the notion of tensor products comes out naturally. We recall it in the case we are dealing with, namely $\N^{2}$.

\begin{defn} Let $\U,\V\in\beta\N$. The tensor product of $\U$ and $\V$, denoted by $\U\otimes\V$, is the ultrafilter in $\beta\N^{2}$ defined by the following property: for all $A\subseteq\N^{2}$ 
\[A\in\U\otimes\V\Leftrightarrow\{n\in\N\mid\{m\in\N\mid(n,m)\in A\}\in\V\}\in\U.\]
\end{defn}

Tensor products can be used to define several binary operations on $\beta\N$. For example, the sum $\U\oplus\V$ of two ultrafilters can be defined as the pushforward $\overline{+}$ of the sum $+:\N^{2}\rightarrow \N$ applied to $\U\otimes\V$; namely, $\U\oplus\V=\overline{+}(\U\otimes\V)$. We refer to \cite{hindman2011algebra} for a through study of these notions. In nonstandard terms, tensor products corresponds to tensor pairs.

\begin{defn}
    We say that $(\alpha,\beta)\in \s\N^2$ is a tensor pair if $\U_{(\alpha,\beta)}=\U_{\alpha}\otimes \U_{\beta}$.
\end{defn}

Tensor pairs in $\s\N^2$ are fully characterized by the following theorem, due to Puritz.

\begin{theorem}\label{4 Puritz}{\cite[Theorem 3.4]{puritz1972skies}}
    Let $\alpha, \beta$ be infinite hypernaturals. Then
    $(\alpha, \beta )$ is a tensor pair if and only if, for every $f:\N\to \N$ such that $f(\beta)\not\in\N$, $f(\beta)>\alpha$.
\end{theorem}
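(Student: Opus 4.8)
\textbf{Proof plan for Puritz's Theorem (Theorem \ref{4 Puritz}).}

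The plan is to prove both directions directly from the definitions of tensor pair and tensor product, translating between the nonstandard picture and the ultrafilter picture via the characterization $\U_{(\alpha,\beta)}=\U_\alpha\otimes\U_\beta$. First I would fix notation: write $\U=\U_\alpha$, $\V=\U_\beta$, and recall that $A\in\U\otimes\V$ iff $\{n\mid A_n\in\V\}\in\U$, where $A_n=\{m\mid (n,m)\in A\}$; equivalently, $(\alpha,\beta)$ is a tensor pair iff for every $A\subseteq\N^2$ one has $(\alpha,\beta)\in\s A\Leftrightarrow\{n\mid A_n\in\V\}\in\U$.

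For the forward direction, assume $(\alpha,\beta)$ is a tensor pair and let $f:\N\to\N$ be such that $f(\beta)\notin\N$ (so $f(\beta)$ is infinite). I want $f(\beta)>\alpha$. Consider the set $A=\{(n,m)\in\N^2\mid n<f(m)\}$. Compute its sections: $A_n=\{m\mid f(m)>n\}$, which is cofinite-ish — more precisely, since $f(\beta)$ is infinite, $f(\beta)>n$ for every standard $n$, hence $\{m\mid f(m)>n\}\in\V$ for every $n\in\N$; but I need this ``uniformly'', i.e.\ I should instead observe directly that $(\alpha,\beta)\in\s A$ would give the conclusion $\alpha<f(\beta)$ by transfer, so it suffices to show $(\alpha,\beta)\in\s A$, i.e.\ $\{n\mid A_n\in\V\}\in\U$, i.e.\ $\{n\mid \{m\mid f(m)>n\}\in\V\}\in\U$. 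Now $f(\beta)\notin\N$ means $f(\beta)$ is larger than every $n\in\N$, so $\beta\in\s\{m\mid f(m)>n\}$ for each fixed $n\in\N$, i.e.\ $\{m\mid f(m)>n\}\in\V$ for every $n\in\N$; hence the set $\{n\mid \{m\mid f(m)>n\}\in\V\}$ is all of $\N$, which is certainly in $\U$. By the tensor-pair property, $(\alpha,\beta)\in\s A$, so $\alpha<f(\beta)$ by transfer of the definition of $A$.

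For the converse, assume that $f(\beta)>\alpha$ for every $f:\N\to\N$ with $f(\beta)\notin\N$; I must show $\U_{(\alpha,\beta)}=\U_\alpha\otimes\U_\beta$. Since both are ultrafilters on $\N^2$, it suffices to show one contains the other; I would show $\U_\alpha\otimes\U_\beta\subseteq\U_{(\alpha,\beta)}$, i.e.\ for $A\subseteq\N^2$ with $\{n\mid A_n\in\V\}\in\U$ we have $(\alpha,\beta)\in\s A$. Set $B=\{n\mid A_n\in\V\}$, so $\alpha\in\s B$. The natural move is to define $f:\N\to\N$ by: for $n\in B$, let $f(n)$ be large enough that $A_n$ meets $\{1,\dots,f(n)\}$ in a $\V$-large way — more carefully, since this only controls an initial segment I instead want $f$ to witness that $\beta$ lands inside $A_\alpha$. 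The clean approach: for each $n$, if $A_n\in\V$ pick (using a fixed choice function on $\N$, or just well-order $\N$) the structure of $A_n$; but $A_n\in\V$ is not about boundedness. So instead I argue by contradiction: suppose $(\alpha,\beta)\notin\s A$, i.e.\ $(\alpha,\beta)\in\s(A^c)$, i.e.\ $\{n\mid (A^c)_n\in\V\}=\{n\mid A_n\notin\V\}\in\U$; combined with $B=\{n\mid A_n\in\V\}\in\U$ this is an immediate contradiction since these two sets are disjoint. Wait — that shows it directly without needing Puritz's hypothesis, which would mean every pair is a tensor pair, which is false; so the error is that $\U_{(\alpha,\beta)}$ evaluated on the ``column'' set need not match. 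The genuinely correct converse argument is: take $A\in\U_{(\alpha,\beta)}$ arbitrary, i.e.\ $(\alpha,\beta)\in\s A$; I must show $\{n\mid A_n\in\V\}\in\U$. Suppose not; then $C=\{n\mid A_n\notin\V\}\in\U$, so $\alpha\in\s C$, and for $n\in C$ the set $A_n$ is $\V$-small, i.e.\ $(A_n)^c\in\V$. Define $g:\N\to\N$ on $C$ by letting $g(n)=\min\{m\mid$ some chosen enumeration$\dots\}$ — here is the real technical heart: I want a function $f$ with $f(\beta)\notin\N$ but $f(\beta)\le\alpha$, contradicting the hypothesis. The standard construction (this is exactly Puritz's): for $n\in C$ enumerate $A_n^c\cap[0,n]$ or rather, using that $A_n$ is small, build $f$ so that $f(n)\ge$ (something forcing $(n,f(n))\notin A$ while $f(n)$ grows).

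\textbf{Main obstacle.} The delicate step is precisely this converse construction of the ``diagonal'' function $f$: one must manufacture, from the failure of the tensor condition, a function $f:\N\to\N$ with $f(\beta)$ infinite yet $f(\beta)\le\alpha$. The idea is that if $\{n\mid A_n\in\V\}\notin\U$ while $(\alpha,\beta)\in\s A$, then on the $\U$-large set $C$ where $A_n\notin\V$, the ``first element of $A_n$ above any given height'' is a well-defined standard function of $n$; pushing forward by $\alpha$ and using $(\alpha,\beta)\in\s A$ one gets a function $f$ with $f(\alpha,\cdot)$-type behaviour bounded by $\alpha$ in the appropriate sense but with $f(\beta)$ infinite, contradicting Puritz's inequality. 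I would carry this out by: (i) reducing to $A$ of the form $\{(n,m)\mid m\in S_n\}$ with each $S_n$ having $\V$-small or $\V$-large status as dictated by $C$; (ii) defining $f(n)$ via a $\min$ over a suitable standard predicate so that $f$ is a genuine function $\N\to\N$; (iii) transferring to $\s\N$ and using $(\alpha,\beta)\in\s A$ together with $\alpha\in\s C$ to pin down $f(\beta)\le\alpha$; (iv) checking $f(\beta)\notin\N$ because $\beta$ is infinite and $S_\alpha$ is infinite (being $\V$-small with infinite complement forces $\beta$ past every standard bound). Everything else — the forward direction and the ultrafilter bookkeeping — is routine transfer and the definition of $\otimes$.
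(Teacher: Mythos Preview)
The paper does not give its own proof of this theorem: it is simply quoted from \cite[Theorem 3.4]{puritz1972skies} and used as a black box, so there is nothing in the paper to compare your argument against.

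That said, on the merits of your write-up: the forward direction is correct and cleanly done. The converse, however, is not a proof but a description of the difficulty. You correctly set up the contrapositive --- assume $(\alpha,\beta)\in{}^*A$ while $C=\{n\mid A_n\notin\V\}\in\U$, and look for $f$ with $f(\beta)$ infinite but $f(\beta)\le\alpha$ --- but you never actually write down $f$. The function you want is
\[
f(m)=\min\{\,n\in C:(n,m)\in A\,\}\quad\text{(and $f(m)=0$ if this set is empty).}
\]
By transfer, ${}^*f(\beta)=\min\{\,n\in{}^*C:(n,\beta)\in{}^*A\,\}$. This set contains $\alpha$ (since $\alpha\in{}^*C$ and $(\alpha,\beta)\in{}^*A$), hence $f(\beta)\le\alpha$. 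And $f(\beta)$ is infinite: if $f(\beta)=k\in\N$ then $k\in C$ (so $A_k\notin\V$) and $\beta\in{}^*A_k$ (so $A_k\in\V$), a contradiction. Your sketch under ``Main obstacle'' gestures at something like this but gets tangled in talk of enumerations and initial segments; the one-line definition above is all that is needed.
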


Trivially, Puritz's Theorem entails the following asymptotic consequence.

\begin{coroll}\label{cor:tensimpliesarch}
    Let $(\alpha, \beta)$ be a tensor pair in $\s\N$. Then $\beta\gg \alpha$. 
\end{coroll}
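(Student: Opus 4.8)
\textbf{Proof plan for Corollary \ref{cor:tensimpliesarch}.}

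The plan is to derive the conclusion directly from Puritz's Theorem (Theorem \ref{4 Puritz}), which characterizes tensor pairs. Suppose $(\alpha,\beta)$ is a tensor pair with $\alpha,\beta$ infinite. We want to show $\beta \gg \alpha$, i.e.\ $n\alpha < \beta$ for every $n\in\N$. Fix $n\in\N$ and consider the standard function $f:\N\to\N$ defined by $f(x) = nx$. Since $\beta$ is infinite, $f(\beta) = n\beta$ is infinite, hence $f(\beta)\notin\N$, so Puritz's Theorem applies and yields $f(\beta) > \alpha$, i.e.\ $n\beta > \alpha$. But this only gives $\beta \gg \alpha$ if we had $n\alpha$ rather than $n\beta$; so the naive choice of $f$ is pointing the wrong way, and the real step is to pick the right function.

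The correct approach: fix $n\in\N$ and apply Puritz with a function that ``divides by $n$''. More carefully, one does not literally need a left inverse of multiplication; instead observe that the relation we must rule out is $\beta \le n\alpha$ for some fixed $n$. Suppose toward a contradiction that $\beta \le n\alpha$. Define $f:\N\to\N$ by $f(x) = \lfloor x/n\rfloor$. Then $f$ is a standard function, and by transfer $f(\beta) = \lfloor \beta/n\rfloor$. Since $\beta$ is infinite and $n$ is standard, $f(\beta)$ is infinite, hence $f(\beta)\notin\N$; so Puritz's Theorem forces $f(\beta) > \alpha$, i.e.\ $\lfloor \beta/n\rfloor > \alpha$, which gives $\beta \ge n\lfloor\beta/n\rfloor > n\alpha$, i.e.\ $\beta > n\alpha$. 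Wait — this actually contradicts the assumption $\beta \le n\alpha$, so we are done: for every $n\in\N$ we get $\beta > n\alpha$, which is exactly $\alpha \ll \beta$, i.e.\ $\beta\gg\alpha$. (One small point to check: $\beta$ being infinite and $n$ standard indeed makes $\lfloor\beta/n\rfloor$ infinite, since $\lfloor\beta/n\rfloor \ge (\beta - n)/n = \beta/n - 1$ is infinite; and an infinite hypernatural is never in $\N$.)

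The main obstacle — really the only subtlety — is selecting the function to feed into Puritz's Theorem so that the inequality it produces points in the useful direction; once $f(x)=\lfloor x/n\rfloor$ is chosen, everything else is a one-line transfer/infiniteness check. There are no saturation or genuinely hard steps here; the statement is labeled ``trivially'' in the excerpt precisely because it is an immediate unwinding of the Puritz characterization. I would keep the write-up to a few lines: state the contrapositive goal $n\alpha<\beta$ for all $n$, invoke Theorem \ref{4 Puritz} with $f=\lfloor\cdot/n\rfloor$, note $f(\beta)\notin\N$ because $\beta$ is infinite, and conclude $\alpha < \lfloor\beta/n\rfloor \le \beta/n$, hence $n\alpha<\beta$.
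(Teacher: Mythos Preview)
Your proof is correct and is exactly the unwinding of Puritz's Theorem that the paper has in mind; the paper itself gives no explicit argument beyond calling the corollary a trivial consequence of Theorem~\ref{4 Puritz}. Your choice $f(x)=\lfloor x/n\rfloor$ is the natural one, and the chain $\alpha<\lfloor\beta/n\rfloor\le\beta/n$ yields $n\alpha<\beta$ for every standard $n$, as required.
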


It is known that operations between ultrafilters can be transformed into operations between numbers in $\s\N$ via Lemma \ref{2 proprietà generali sim}.(3) and the notion of tensor pair. For example:

\begin{theorem}\label{4 Teo su somma di ultrafiltri come somma di ipernaturali}
    Let $\U,\V\in\beta\N$, let $\alpha$ be a generator of $\U$ and $\beta$ a generator of $\V$ and assume that $(\alpha,\beta)$ is a tensor pair. Then  $\U\oplus \V$ is generated by $\alpha + \beta$.
\end{theorem}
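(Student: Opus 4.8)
The plan is to unfold the definition of tensor pair, identify what needs to be checked, and reduce everything to membership statements about $u$-equivalence classes, which we already control via Lemma~\ref{2 proprietà generali sim}. Concretely, we want to show $\U_{\alpha+\beta}=\U_{\alpha}\oplus\U_{\beta}=\overline{+}(\U_{\alpha}\otimes\U_{\beta})$. Since $(\alpha,\beta)$ is a tensor pair, we have $\U_{(\alpha,\beta)}=\U_{\alpha}\otimes\U_{\beta}$ by definition, and pushing forward a generator commutes with pushing forward an ultrafilter: this is exactly Lemma~\ref{2 proprietà generali sim}.(1), which says that if $\gamma\models\U$ then $f(\gamma)\models\overline{f}(\U)$ for standard $f$. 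Applying this with $\gamma=(\alpha,\beta)$, $f=+\colon\N^2\to\N$, and $\overline{f}=\overline{+}$, we get $\alpha+\beta=f(\alpha,\beta)\models \overline{+}(\U_{(\alpha,\beta)})=\overline{+}(\U_{\alpha}\otimes\U_{\beta})=\U_{\alpha}\oplus\U_{\beta}$, which is the claim.

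The only point requiring a word of care is that Lemma~\ref{2 proprietà generali sim}.(1) as stated asserts $f(\alpha)\sim f(\beta)$ when $\alpha\sim\beta$; I would either invoke the slightly more basic underlying fact that $f(\gamma)$ generates the pushforward ultrafilter $\overline{f}(\U_\gamma)$ (i.e. $\U_{f(\gamma)}=\overline{f}(\U_\gamma)$, which is immediate from the definitions: for $A\subseteq\N^m$, $f(\gamma)\in\s A$ iff $\gamma\in\s(f^{-1}A)$ iff $f^{-1}A\in\U_\gamma$ iff $A\in\overline{f}(\U_\gamma)$, using $\s(f^{-1}A)=({}^*f)^{-1}(\s A)$ by transfer), or I would note that this is precisely the content cited earlier in the discussion preceding Lemma~\ref{2 proprietà generali sim}. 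Either way, the identity $\U_{f(\gamma)}=\overline{f}(\U_\gamma)$ is the engine of the proof.

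So the structure is: (i) recall $\U_{f(\gamma)}=\overline{f}(\U_\gamma)$ for standard $f$ and any generator $\gamma$; (ii) apply it to $f=+$ and $\gamma=(\alpha,\beta)$; (iii) substitute $\U_{(\alpha,\beta)}=\U_\alpha\otimes\U_\beta$ (the hypothesis that $(\alpha,\beta)$ is a tensor pair) and $\overline{+}(\U_\alpha\otimes\U_\beta)=\U_\alpha\oplus\U_\beta$ (the definition of $\oplus$) to conclude $\U_{\alpha+\beta}=\U_\alpha\oplus\U_\beta$, i.e. $\alpha+\beta$ generates $\U\oplus\V$. There is no real obstacle here — the statement is essentially a bookkeeping consequence of how pushforwards of generators behave — so the "hard part" is merely making sure the pushforward-of-generator lemma is cited in the precise form needed rather than only through the $\sim$-invariance corollary.

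\begin{proof}
Recall the basic transfer fact that for any standard function $f\colon\N^k\to\N^m$ and any $\gamma\in\s\N^k$, the number $f(\gamma)$ generates the pushforward ultrafilter $\overline{f}(\U_\gamma)$; indeed, for $A\subseteq\N^m$ one has $f(\gamma)\in\s A$ iff $\gamma\in ({}^*f)^{-1}(\s A)=\s(f^{-1}(A))$ iff $f^{-1}(A)\in\U_\gamma$ iff $A\in\overline{f}(\U_\gamma)$. Apply this with $k=2$, $m=1$, $f=+\colon\N^2\to\N$ and $\gamma=(\alpha,\beta)$: we obtain that $\alpha+\beta$ generates $\overline{+}(\U_{(\alpha,\beta)})$. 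Since $(\alpha,\beta)$ is a tensor pair, $\U_{(\alpha,\beta)}=\U_\alpha\otimes\U_\beta=\U\otimes\V$, and by definition of the sum of ultrafilters $\overline{+}(\U\otimes\V)=\U\oplus\V$. Hence $\alpha+\beta$ generates $\U\oplus\V$, as claimed.
\end{proof}
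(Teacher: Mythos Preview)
Your proof is correct. The paper does not actually give a proof of this theorem: it is stated as a known fact, introduced by the sentence ``It is known that operations between ultrafilters can be transformed into operations between numbers in $\s\N$ via Lemma~\ref{2 proprietà generali sim}.(3) and the notion of tensor pair,'' and left unproved (with the surrounding discussion referring the reader to \cite{di2015hypernatural}). Your argument is exactly the standard one --- the identity $\U_{f(\gamma)}=\overline{f}(\U_\gamma)$ combined with the tensor-pair hypothesis and the definition of $\oplus$ --- and nothing more is needed. One small remark: the paper's introductory sentence points to Lemma~\ref{2 proprietà generali sim}.(3) rather than the pushforward identity you use, but that lemma is not actually needed for this direction; your route via $\U_{f(\gamma)}=\overline{f}(\U_\gamma)$ is the cleaner and more direct one, and the paper itself later invokes precisely this identity (citing back to this theorem) in the proof of the lemma characterizing $Arc(\U)$.
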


The last property of tensors that we will use is that we can always fix a coordinate when searching for tensors, in the sense of the following consequence of Puritz's characterization. 

\begin{prop}[\cite{di2015hypernatural}, Theorem 5.12]\label{3 Prop ci sono tanti tensori}
    Let $\alpha, \beta \in \s \N$ be infinite numbers. 
    \begin{itemize}
        \item $R_{\alpha, \beta}=\{\beta'\sim\beta \mid (\alpha, \beta') \text{ is a tensor pair}\}$ is unbounded in $\s\N$;
        \item $L_{\alpha, \beta}=\{\alpha'\sim\alpha \mid (\alpha', \beta) \text{ is a tensor pair}\}$ is leftward unbounded in $\s\N\setminus \N$.
    \end{itemize} 
\end{prop}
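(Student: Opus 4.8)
The plan is to deduce both items from one saturation argument, after the following reformulation. For hypernaturals with $\alpha'\sim\alpha$ and $\beta'\sim\beta$, the pair $(\alpha',\beta')$ is a tensor pair if and only if $\U_{(\alpha',\beta')}=\U_{\alpha}\otimes\U_{\beta}$; equivalently, if and only if $(\alpha',\beta')\in\s C$ for every $C\in\U_{\alpha}\otimes\U_{\beta}$. Hence $R_{\alpha,\beta}$ is exactly the set of second coordinates of generators of $\U_{\alpha}\otimes\U_{\beta}$ whose first coordinate is \emph{literally} $\alpha$, and $L_{\alpha,\beta}$ is the analogous set of first coordinates with second coordinate literally $\beta$. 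So in each case it suffices to realise, by saturation, a suitable family of internal conditions.

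For the first item, fix $\nu\in\s\N$; I look for $\beta'\in R_{\alpha,\beta}$ with $\beta'>\nu$. For $C\in\U_{\alpha}\otimes\U_{\beta}$ let $C_{\alpha}:=\{m\in\s\N : (\alpha,m)\in\s C\}$, an internal set, and consider the family $\{C_{\alpha} : C\in\U_{\alpha}\otimes\U_{\beta}\}\cup\{\{m\in\s\N : m>\nu\}\}$. It has the finite intersection property: given $C^{(1)},\dots,C^{(k)}$, put $C=\bigcap_i C^{(i)}\in\U_{\alpha}\otimes\U_{\beta}$, so that $E:=\{n\in\N : \{m:(n,m)\in C\}\in\U_{\beta}\}\in\U_{\alpha}$, hence $\alpha\in\s E$. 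Since $\U_{\beta}$ is nonprincipal, for each $n\in E$ the section $\{m:(n,m)\in C\}$ is infinite; transferring the standard statement ``for all $n\in E$ and all $M$ there is $m>M$ with $(n,m)\in C$'' and evaluating at $n=\alpha$, $M=\nu$, gives $m>\nu$ with $(\alpha,m)\in\s C$, i.e. $m\in\bigcap_i C^{(i)}_{\alpha}$. By saturation there is a common element $\beta'$; then $\U_{(\alpha,\beta')}=\U_{\alpha}\otimes\U_{\beta}$, so pushing forward along the coordinate projections $\beta'\sim\beta$ and $(\alpha,\beta')$ is a tensor pair, that is $\beta'\in R_{\alpha,\beta}$, with $\beta'>\nu$. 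As $\nu$ was arbitrary, $R_{\alpha,\beta}$ is unbounded in $\s\N$.

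For the second item the argument is symmetric, and the finite-intersection step is the one point that needs a small observation. Fix an infinite $\mu$; I look for $\alpha'\in L_{\alpha,\beta}$ with $\alpha'<\mu$. For $C\in\U_{\alpha}\otimes\U_{\beta}$ set ${}^{\beta}\!C:=\{n\in\s\N : (n,\beta)\in\s C\}$ and consider the family $\{{}^{\beta}\!C : C\in\U_{\alpha}\otimes\U_{\beta}\}\cup\{\{n\in\s\N : n<\mu\}\}$. For the finite intersection property, with $C=\bigcap_i C^{(i)}$ as before, the set $E:=\{n\in\N : \{m:(n,m)\in C\}\in\U_{\beta}\}$ lies in $\U_{\alpha}$ and is therefore infinite (nonprincipality again); picking any $n_0\in E$, transfer gives $(n_0,\beta)\in\s C$, while $n_0\in\N$ is in particular below the infinite $\mu$ — so the finite fragment is already witnessed by a standard natural number. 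Saturation then yields $\alpha'<\mu$ with $(\alpha',\beta)\in\s C$ for all $C\in\U_{\alpha}\otimes\U_{\beta}$, hence $\U_{(\alpha',\beta)}=\U_{\alpha}\otimes\U_{\beta}$, so $\alpha'\sim\alpha$ and $(\alpha',\beta)$ is a tensor pair; moreover $\alpha'$ is infinite, since $\alpha'\sim\alpha\in\s\N\setminus\N$ forces $\alpha'\notin\N$ by Lemma \ref{2 proprietà generali sim}.(4). Thus $\alpha'\in L_{\alpha,\beta}$ and $\alpha'<\mu$, proving leftward unboundedness in $\s\N\setminus\N$.

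The only thing that really needs care is checking that the element produced by saturation genuinely lands in $R_{\alpha,\beta}$, respectively $L_{\alpha,\beta}$: one must verify that generating $\U_{\alpha}\otimes\U_{\beta}$ forces the free coordinate to be $\sim\beta$ (resp. $\sim\alpha$) and the pair to be tensor, and, in the second item, that $\alpha'$ does not collapse into $\N$. All of this is routine once one has the projection–pushforward identities for tensor products together with Lemma \ref{2 proprietà generali sim}. One could alternatively verify the tensor-pair condition via Puritz's Theorem \ref{4 Puritz}, but phrasing everything through generators of $\U_{\alpha}\otimes\U_{\beta}$ keeps the saturation argument internal and sidesteps the awkward disjunction ``$f(\beta')>\alpha$ or $f(\beta')\in\N$''.
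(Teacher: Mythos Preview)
Your argument is correct. The paper itself does not supply a proof of this proposition: it is quoted verbatim from \cite{di2015hypernatural}, Theorem 5.12, and used as a black box. Your saturation argument is essentially the standard one and matches the spirit of the proof in the cited reference: realise, inside $\s\N$, a generator of $\U_{\alpha}\otimes\U_{\beta}$ with one coordinate pinned and the free coordinate subject to an additional internal inequality, then read off the $\sim$-class of the free coordinate from the projection--pushforward identities.

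Two small remarks. First, the saturation step needs the family $\{C_{\alpha}:C\in\U_{\alpha}\otimes\U_{\beta}\}$, of cardinality $2^{\aleph_{0}}$, to have nonempty intersection; the paper states the extension is ``at least $2^{\aleph_{0}}$-saturated'', and elsewhere (e.g.\ in the remark that every ultrafilter has a generator) already applies saturation to families of exactly this size, so this is consistent with the paper's conventions. Second, in the finite-intersection check for the first item you transfer the statement ``$\forall n\in E\ \forall M\ \exists m>M\ (n,m)\in C$'' and evaluate at the nonstandard parameters $n=\alpha$, $M=\nu$; this is fine because $E$ and $C$ are standard sets and transfer applies to the displayed sentence as written. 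Both points are routine, and your closing paragraph already flags the only genuinely delicate verifications (that the saturated element lands in the right $\sim$-class and, in the second item, stays out of $\N$).
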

 We refer to \cite{luperi2019tensor} for a detailed study of tensor pairs and their extensions in higher dimensions.

Finally, we will need the following known topological property of $(\beta\N,\oplus)$. Since we could not find a precise reference to this exact statement, we will provide a short proof of it here.

\begin{prop}\label{3 prop sulla chiusura delle orbite}
    Let $\U\in \beta\N$ be a non-principal ultrafilter; then its $\Z$-orbit $orb(\U)=\{n \oplus \U \mid n\in \Z\}$ is not open nor closed. 
\end{prop}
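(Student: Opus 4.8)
The plan is to prove the two non-containments separately, using only the general topology of $\beta\N$ together with Lemma \ref{2 proprietà generali sim}. I would first record two easy facts. \emph{(a)} $orb(\U)\subseteq\beta\N\setminus\N$: fix an infinite generator $\alpha\models\U$; then each $n\oplus\U$ is generated by $\alpha+n$ (still a positive infinite hypernatural, for every $n\in\Z$), and if $\U_{\alpha+n}$ were principal, say equal to $\U_k$ with $k\in\N$, then $\alpha+n\sim k$ would force $\alpha+n=k$ by Lemma \ref{2 proprietà generali sim}.(4), contradicting that $\alpha$ is infinite. \emph{(b)} The map $n\mapsto n\oplus\U$ is injective, so $orb(\U)$ is countably infinite: if $\alpha+n\sim\alpha+m$, then Lemma \ref{2 proprietà generali sim}.(2) forces $\alpha+n=\alpha+m$ or $|n-m|$ infinite, hence $n=m$. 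Finally I would observe that for every $k\in\Z$ the shift $\phi_k\colon m\oplus\U\mapsto(m+k)\oplus\U$ is a self-homeomorphism of $orb(\U)$: it is the restriction to $orb(\U)$ of the continuous pushforward to $\beta\N$ of the map $t\mapsto t+1$ iterated $k$ times (resp., of $t\mapsto\max(t-1,0)$ iterated $|k|$ times, when $k<0$), which on the non-principal points of the orbit acts exactly as the shift by $k$; and $\phi_k\circ\phi_{-k}=\mathrm{id}$ on $orb(\U)$. In particular the $\Z$-action by shifts is transitive on $orb(\U)$, so $orb(\U)$ is a homogeneous space.

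For \emph{not open}: since $\N$ is dense in $\beta\N$, the closed set $\beta\N\setminus\N$ has empty interior; by (a), $orb(\U)$ is a nonempty subset of it, hence is not open. (Equivalently, every nonempty open subset of $\beta\N$ contains a basic clopen set $\overline{A}$ with $A\subseteq\N$ infinite, and then $|\overline{A}|=|\beta\N|=2^{2^{\aleph_{0}}}>\aleph_{0}\ge|orb(\U)|$.)

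For \emph{not closed}: suppose $orb(\U)$ is closed. Then it is a closed subspace of the compact Hausdorff space $\beta\N$, hence itself compact Hausdorff, and by (b) it is countably infinite. But a countably infinite compact Hausdorff space must have an isolated point: otherwise it is perfect, and the usual construction of a Cantor scheme of nonempty open sets with pairwise disjoint closures embeds $2^{\aleph_{0}}$ points into it, contradicting countability. So some $n_{0}\oplus\U$ is isolated in $orb(\U)$; applying the homeomorphism $\phi_{m-n_{0}}$ then shows that every $m\oplus\U$ is isolated, i.e. $orb(\U)$ is discrete. A countably infinite discrete space is not compact (its cover by singletons has no finite subcover), a contradiction. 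Hence $orb(\U)$ is not closed.

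I expect the only genuinely delicate step to be the verification that the backward shift $\phi_{-1}$ is continuous on $orb(\U)$ (the forward shift is visibly continuous, being a pushforward), which is why the plan routes through the continuous self-maps of $\beta\N$ induced by $t\mapsto t+1$ and $t\mapsto\max(t-1,0)$ and only afterwards restricts to the orbit, where both act as honest shifts. The remaining ingredients — density of $\N$ in $\beta\N$, and the fact that countably infinite compact Hausdorff spaces have isolated points — are standard.
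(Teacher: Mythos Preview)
Your proof is correct and takes a genuinely different route from the paper, particularly for ``not closed''. For ``not open'', your density argument is immediate; the paper instead argues by cardinality (nonempty relatively open subsets of $\beta\N\setminus\N$ have size $2^{2^{\aleph_0}}$), which is essentially your parenthetical alternative---though note a small slip there: not every nonempty open subset of $\beta\N$ contains some $\overline{A}$ with $A$ infinite (principal singletons are open), so that cardinality argument really lives in $\beta\N\setminus\N$, not $\beta\N$; your main density argument is unaffected. For ``not closed'', the paper proceeds constructively: it exhibits an explicit non-principal $\V$, built from congruence conditions $R(r-1,r)\in\V$, such that $\V\oplus\U$ lies in $\overline{orb(\U)}=\beta\Z\oplus\U$ but equals no $n\oplus\U$. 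Your argument is instead purely topological---a countably infinite compact Hausdorff space has an isolated point, while the shift action makes $orb(\U)$ homogeneous, so closure would force discreteness and hence non-compactness. Your route is shorter and uses no arithmetic of $\beta\N$; the paper's route buys an explicit limit point and the uniform strengthening (recorded in the Remark following the Proposition) that a single $\V$ works simultaneously for every $\U$.
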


\begin{proof}
    The set $orb(\U)$ is not open because every open set in $\beta\N\setminus \N$ is uncountable. This follows easily from the observation that over a set $D$ we have $2^{2^{\vert D\vert}}$ different ultrafilters (cf. \cite{hindman2011algebra}, Section 3.6), and that given a basis set $\mathcal{O}_A$ with $A$ infinite, the map $\beta A \to \mathcal{O}_A$ extending an ultrafilter on $A$ to one on $\N$ is injective. 

    The set $orb(\U)$ is also not closed. In fact, it can be proved that $\overline{\{n \oplus \U \mid n\in \Z\}}=\beta\Z \oplus \U$, so it is enough to prove that $\beta\Z \oplus \U\not= n \oplus \U$ for every $n\in \Z$. Let $\V$ be an ultrafilter with the following property: for every $r \in \N$, 
    \begin{equation}\label{3 equazione inutile}
        R(r-1 , r)\in \V,
    \end{equation}
     
     where $R(m, r)=\{n \in \N \mid n\equiv m \text{ mod }r\}$.
    Such an ultrafilter exists, as $\{R(r-1,r)\mid r\in\N\}$ has the Finite Intersection Property. 

    We show that for every $n\in \Z$, $\V \oplus \U \not= n \oplus \U$.  Suppose, by contrast, that $m \oplus \U = \V \oplus \U$. Let $i\in [1,m+2]$ be such that $R(i, m+2)\in \U$. Then $R(i +m, m+2)=R(i, m +2) + m \in m \oplus \U =\V\oplus \U$, which means that $$A=\{n \in \N \mid R(i + m , m+2) - n \in \U\}\in \V.$$ 
    Since $R(i + m , m+2) - n=R(i + m -n, m+2)$, we deduce that $A\subseteq \{n \in \N \mid n\equiv m \text{ mod } m+2\}=R(m, m+2)\in \V$, against (\ref{3 equazione inutile}).
\end{proof}

\begin{rmrk}
    We notice that the ultrafilter $\V$ that we constructed in the proof above satisfies the following stronger property: for all $\U\in\beta\N$ $\V\oplus\U$ (and $\U\oplus\V$ as well) does not belong to $\{\U\oplus n\mid n\in\N\}$.
\end{rmrk}

\subsection{Equation in $\beta\N$ solved via Archimedean classes}\label{eq ultra}

Similar to what has been done in \cite{di2025ramsey}, asymptotics can be used to provide simple proofs of known arithmetic results about ultrafilters. To provide examples of this fact, we give here two easy proofs of well-known results first proven in \cite{Maleki} and \cite{HMS}.

\begin{theorem}[\cite{Maleki}, Theorem A]\label{5 Equazioni: pezzo grande uguale}
    Let $\U, \V, \mathcal{W}\in \beta\N$, with $\U$ non-principal. Let $a,b\in\N$. If $\mathcal{W} \oplus a\U = \V\oplus b\U$ then $a=b$.
\end{theorem}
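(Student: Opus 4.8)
The plan is to translate the ultrafilter equation $\mathcal{W}\oplus a\U=\V\oplus b\U$ into a statement about hypernatural generators via the nonstandard dictionary from Section~\ref{sec:ultrafilters and hyperextensions}, and then exploit the Archimedean hierarchy. Concretely, pick a generator $\omega\models\mathcal{W}$, a generator $\mu\models\U$, and a generator $\nu\models\V$; using Proposition~\ref{3 Prop ci sono tanti tensori} I can arrange, after passing to $\sim$-equivalent generators, that the relevant pairs are tensor pairs, so that by Theorem~\ref{4 Teo su somma di ultrafiltri come somma di ipernaturali} the number $\omega + a\mu$ generates $\mathcal{W}\oplus a\U$ and $\nu + b\mu$ generates $\V\oplus b\U$. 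The hypothesis $\mathcal{W}\oplus a\U=\V\oplus b\U$ then says these two numbers generate the same ultrafilter, i.e.\ $\omega + a\mu \sim \nu + b\mu$. The goal $a=b$ should now fall out of an Archimedean-class comparison: since $\mu$ comes from a tensor pair with $\omega$ (resp.\ $\nu$), Corollary~\ref{cor:tensimpliesarch} forces $\mu\gg\omega$ and $\mu\gg\nu$, so by Lemma~\ref{3 Lemma robette}.(1) we get $\omega+a\mu\asymp a\mu$ and $\nu+b\mu\asymp b\mu$. Thus $a\mu\asymp b\mu$, and (after dividing by $\mu$, cf.\ Lemma~\ref{3 Lemma robette}.(3) and Proposition~\ref{3 stessa classe sse log a distanza finita}.(2)) $st(a\mu/(b\mu))=a/b$ is positive and finite — which is automatic — so I need the sharper fact that the standard part equals $1$: since $\omega + a\mu \sim \nu + b\mu$ and these are in the same Archimedean class, Theorem~\ref{2 stessa classe e sim allora st=1} gives $st\!\left(\frac{\omega+a\mu}{\nu+b\mu}\right)=1$, and combining with the two approximations $\omega+a\mu\approx a\mu$, $\nu+b\mu\approx b\mu$ (approximations in the sense of the Section~1 terminology, so ratios have the same standard part) yields $st(a\mu/b\mu)=1$, i.e.\ $a/b=1$, i.e.\ $a=b$.

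\textbf{Carrying it out, step by step.} First I would record that $\U$ non-principal means $\mu$ is infinite, so $a\mu,b\mu$ are infinite and the Archimedean machinery applies. Second, I invoke Proposition~\ref{3 Prop ci sono tanti tensori}: fix $\mu\models\U$; the set of $\omega'\sim\omega$ with $(\omega',\mu)$ a tensor pair is leftward unbounded, hence nonempty, and similarly for $\nu$ — but I should be slightly careful, because I want one $\mu$ that simultaneously forms a tensor pair with the chosen $\omega$ and with the chosen $\nu$. The cleanest route is to fix generators and then use the left-unboundedness clause twice, or better, to first fix $\omega\models\mathcal W$, choose $\mu'\sim\mu$ with $(\omega,\mu')$ a tensor pair, then choose $\nu'\sim\nu$ with $(\nu',\mu')$ a tensor pair; since $\sim$-equivalent generators generate the same ultrafilters, nothing is lost, and I rename. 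Third, apply Theorem~\ref{4 Teo su somma di ultrafiltri come somma di ipernaturali} on each side to get generators $\omega+a\mu'$ and $\nu'+b\mu'$ of the two sides of the equation; equality of ultrafilters gives $\omega+a\mu'\sim\nu'+b\mu'$. Fourth, apply Corollary~\ref{cor:tensimpliesarch} to both tensor pairs to get $\mu'\gg\omega$ and $\mu'\gg\nu'$, hence by Lemma~\ref{3 Lemma robette}.(1) the approximations $\omega+a\mu'\asymp a\mu'$ and $\nu'+b\mu'\asymp b\mu'$ (and in fact they are approximations in the strong sense, so ratios to $a\mu',b\mu'$ are infinitely close to $1$). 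Fifth, apply Theorem~\ref{2 stessa classe e sim allora st=1} to the $\sim$-equivalent, $\asymp$-equivalent pair $\omega+a\mu'$, $\nu'+b\mu'$ to conclude their ratio has standard part $1$; chaining with the two approximations gives $st(a\mu'/b\mu')=1$, so $a=b$.

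\textbf{The main obstacle.} The only genuinely delicate point is the simultaneous choice of a single generator $\mu'$ of $\U$ forming tensor pairs with both $\omega$ and $\nu'$ — i.e.\ arranging that the dictionary theorem (Theorem~\ref{4 Teo su somma di ultrafiltri como somma di ipernaturali}) applies on both sides of the equation with the \emph{same} copy of $\U$. The subtlety is that $\beta$-equivalent generators are interchangeable for \emph{which ultrafilter they generate}, but tensor-pair-ness is not $\sim$-invariant in the second coordinate alone, so the order of choices matters; the fix, as sketched above, is to fix $\mu'$ \emph{first} (or fix $\omega$ first and pick $\mu'$ against it, then pick $\nu'$ against that $\mu'$), exploiting both clauses of Proposition~\ref{3 Prop ci sono tanti tensori}. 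Everything after that is a routine Archimedean computation: the substantive mathematical content is entirely carried by Corollary~\ref{cor:tensimpliesarch} (tensor $\Rightarrow$ $\gg$) and Theorem~\ref{2 stessa classe e sim allora st=1} ($\sim$ and $\asymp$ $\Rightarrow$ $st=1$), which together pin the ratio $a/b$ to be exactly $1$ rather than merely a positive rational.
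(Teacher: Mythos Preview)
Your proposal is correct and follows essentially the same route as the paper: translate the ultrafilter equation into $\omega+a\mu\sim\nu+b\mu$ with $(\omega,\mu)$ and $(\nu,\mu)$ tensor pairs, observe via Corollary~\ref{cor:tensimpliesarch} that $\mu$ dominates so both sides lie in the same Archimedean class, and then invoke Theorem~\ref{2 stessa classe e sim allora st=1} to force $st\!\left(\frac{\omega+a\mu}{\nu+b\mu}\right)=1=\frac{a}{b}$. The paper's writeup is terser only because it packages the translation step into a separate nonstandard reformulation and takes the simultaneous tensor-pair setup for granted; your explicit handling of that setup via Proposition~\ref{3 Prop ci sono tanti tensori} is a welcome clarification rather than a different argument.
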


From our nonstandard point of view, Theorem \ref{5 Equazioni: pezzo grande uguale} above can be rephrased as follow:

\begin{theorem}
    Let $a,b\in\N$ and let $\alpha,\beta,\gamma\in \s\N$ with $\alpha$ infinite and $(\beta, \alpha), (\gamma, \alpha)$ tensor pairs. Suppose that $\beta + a \alpha\sim \gamma + b\alpha$. Then $a=b$.
\end{theorem}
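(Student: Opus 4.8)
The plan is to translate the statement into the language of Archimedean classes and use the machinery built up earlier, especially Corollary \ref{cor:tensimpliesarch} and Theorem \ref{2 stessa classe e sim allora st=1}. Since $(\beta,\alpha)$ and $(\gamma,\alpha)$ are tensor pairs with $\alpha$ infinite, Corollary \ref{cor:tensimpliesarch} gives $\alpha \gg \beta$ and $\alpha \gg \gamma$; hence by Lemma \ref{3 Lemma robette}.(1), $\beta + a\alpha \asymp a\alpha$ and $\gamma + b\alpha \asymp b\alpha$. First I would record that $\beta + a\alpha \sim \gamma + b\alpha$ implies, via Proposition \ref{3 stessa classe sse log a distanza finita}, that the two sides are in the same Archimedean class, so $a\alpha \asymp b\alpha$, and by Lemma \ref{3 Lemma robette}.(3) this forces $\alpha \asymp \alpha$ trivially — which gives no information yet. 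The point is rather to extract the exact ratio.

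The key step is to compute $st\!\left(\frac{\beta + a\alpha}{\gamma + b\alpha}\right)$. On one hand, since $\beta + a\alpha \sim \gamma + b\alpha$ and these are in the same Archimedean class, Theorem \ref{2 stessa classe e sim allora st=1} tells us this standard part equals $1$. On the other hand, since $\alpha \gg \beta$ and $\alpha \gg \gamma$, we have $\frac{\beta + a\alpha}{\gamma + b\alpha} = \frac{a + \beta/\alpha}{b + \gamma/\alpha}$, and both $\beta/\alpha$ and $\gamma/\alpha$ are infinitesimal, so the standard part of this ratio is $\frac{a}{b}$. Comparing, $\frac{a}{b} = 1$, i.e. $a = b$.

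The only subtlety — and the step I expect to be the mild obstacle — is justifying that $\beta + a\alpha \asymp \gamma + b\alpha$ so that Theorem \ref{2 stessa classe e sim allora st=1} applies; but this is immediate because $\sim$-equivalent hypernaturals are always Archimedean-comparable in the sense that $\sim$-equivalence of two infinite numbers forces them into the same Archimedean class (if $\alpha \sim \beta$ then $l(\alpha) = l(\beta) + c$ is impossible to be infinite would need care, but here $\beta + a\alpha \asymp a\alpha$ and $\gamma + b\alpha \asymp b\alpha$ with $a,b$ standard already pins both sides to $Arc(\alpha)$ directly, bypassing the issue entirely). One should also note $\alpha$ infinite guarantees $\beta + a\alpha$ and $\gamma + b\alpha$ are infinite, so that the standard part computation is the relevant one. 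With these observations in place the argument is a two-line computation.
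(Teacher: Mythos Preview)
Your proof is correct and is essentially the paper's own argument, just with the details spelled out: both use Corollary~\ref{cor:tensimpliesarch} to get $\beta,\gamma\ll\alpha$, deduce $\beta+a\alpha\asymp\alpha\asymp\gamma+b\alpha$, and then apply Theorem~\ref{2 stessa classe e sim allora st=1} to conclude $1=st\!\left(\frac{\beta+a\alpha}{\gamma+b\alpha}\right)=\frac{a}{b}$. One caution: the parenthetical claim that ``$\sim$-equivalence of two infinite numbers forces them into the same Archimedean class'' is false in general (tensor pairs of $\sim$-equivalent elements give counterexamples), but you correctly note that the direct route via $Arc(\alpha)$ bypasses this, so the argument stands.
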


\begin{rmrk}
    The same is true even if we allow $a,b$ to be reals.
\end{rmrk}

\begin{proof}
    It is enough to notice that $\beta + a\alpha\asymp \gamma + b\alpha$: then, since they are equivalent, from Theorem \ref{2 stessa classe e sim allora st=1} we have 
\begin{equation*}
    1=st\left(\frac{\beta + a\alpha}{\gamma + b\alpha}\right)=\frac{a}{b}.\qedhere
\end{equation*}   
\end{proof}

The above result uses the fact that asymptotic considerations give a good control on the behaviour of the $\asymp$-large elements in a sums. The next result will show that such methods can be used also to control $\asymp$-small elements.

\begin{theorem}[\cite{HMS}, Corollary 2.7]\label{teo small ultra}
    Let $a,b\in\N$. Let $\U, \V, \mathcal{W}\in \beta\N$, with $\U$ non-principal. If $  a\U\oplus\mathcal{W} =  b\U\oplus\V$ then $a=b$.
\end{theorem}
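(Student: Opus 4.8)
The plan is to mirror the nonstandard translation we used for Theorem \ref{5 Equazioni: pezzo grande uguale}, but now with the roles of the $\asymp$-large and $\asymp$-small summands reversed, and using Archimedean-class control on the \emph{small} part of a sum. First I would restate the ultrafilter equation $a\U\oplus\mathcal W=b\U\oplus\V$ in nonstandard terms: choose generators $\alpha\models\U$, $\delta\models\mathcal W$, $\varepsilon\models\V$, and, using Proposition \ref{3 Prop ci sono tanti tensori} (and the fact that we may fix one coordinate when searching for tensors), arrange that $(\alpha,\delta)$ and $(\alpha,\varepsilon)$ are tensor pairs \emph{with the same $\alpha$}. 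By Corollary \ref{cor:tensimpliesarch} this gives $\delta\gg\alpha$ and $\varepsilon\gg\alpha$, so in the sums $a\alpha+\delta$ and $b\alpha+\varepsilon$ the terms $\delta,\varepsilon$ dominate while $a\alpha,b\alpha$ are the $\asymp$-small parts. Theorem \ref{4 Teo su somma di ultrafiltri come somma di ipernaturali} (applied to the tensor pairs, after swapping coordinates so the summation order matches $a\U\oplus\mathcal W$) turns the hypothesis into $a\alpha+\delta\sim b\alpha+\varepsilon$.

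The heart of the argument is then to extract, from a $\sim$-equivalence between two numbers whose dominant parts agree up to this equivalence, that their \emph{small parts} are equal. Concretely, from $a\alpha+\delta\sim b\alpha+\varepsilon$ and Lemma \ref{3 Lemma robette}.(1) we get $\delta\asymp a\alpha+\delta\asymp b\alpha+\varepsilon\asymp\varepsilon$, so in particular $\delta\sim\varepsilon$ is false in general — but $\delta$ and $\varepsilon$ are both $\sim$-equivalent to the common value $a\alpha+\delta$ only after subtracting, so the right move is to consider the function $g(x)=x-a\cdot(\text{something})$. The clean way: apply Lemma \ref{2 proprietà generali sim}.(3) to write $\varepsilon=g(\alpha,\delta)$ for some $\gamma$-type; better, subtract. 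Since $a\alpha+\delta\sim b\alpha+\varepsilon$, and the map subtracting the small part is not standard, I instead argue via the standard function $x\mapsto x-\lfloor x/2^{l(\alpha)+k}\rfloor 2^{l(\alpha)+k}$-type truncations: the low-order contribution $a\alpha$ lives below the scale of $\delta$, and because $\delta\gg\alpha$ we have $l(\delta)>l(\alpha)+C$ for every standard $C$. Then for a suitable standard function $F$ depending on a parameter, $F(a\alpha+\delta)=a\alpha$ while $F(b\alpha+\varepsilon)=b\alpha$, and Lemma \ref{2 proprietà generali sim}.(1) gives $a\alpha\sim b\alpha$; hence $a=b$ by Lemma \ref{2 proprietà generali sim}.(4), since dividing by the standard-equivalence-preserving map $x\mapsto x/\alpha$ (valid as $\alpha$ divides both — or just passing to $\mathbb Q$) would force $a\sim b$ with $a,b\in\N$.

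More carefully, to make the truncation rigorous without divisibility worries, I would use Puritz's Theorem directly: since $(\alpha,\delta)$ is a tensor pair, every $f:\N\to\N$ with $f(\delta)\notin\N$ satisfies $f(\delta)>\alpha$; in particular, for the function returning the quotient of $\delta$ by the largest power of $2$ below $\delta/\alpha^{2}$, one sees that $a\alpha$ is recoverable from $a\alpha+\delta$ by a standard operation uniformly in the $a$-independent data. The cleanest packaging is: the function $h_N(x)=x\bmod N$ for $N=2^{l(\alpha)}$-scale quantities; but since $l(\alpha)$ is nonstandard we cannot use it as a parameter, so the honest route is to note that $a\alpha+\delta$ and $b\alpha+\varepsilon$ being $u$-equivalent means they realize the same type, and the statement ``the value is congruent to $r$ modulo $m$'' is in the type for every standard $m,r$; combined with $\alpha,\delta,\varepsilon$ being highly divisible (which follows, as in Proposition \ref{2 prop x^n + y^n = z^n-1 PR sse caso con i coefficenti}, from $u$-equivalence forcing divisibility by every standard $m$), one shows $a\alpha\equiv b\alpha$ modulo every power of $2$ beyond what $\delta$ contributes, giving $a\alpha=b\alpha$.

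\textbf{Main obstacle.} The delicate point is exactly this last extraction: unlike Theorem \ref{5 Equazioni: pezzo grande uffen}, where dividing by the large common part and taking standard parts via Theorem \ref{2 stessa classe e sim allora st=1} immediately yields $a/b=1$, here the quantities $a\alpha,b\alpha$ are \emph{infinitesimal relative to the sum}, so $st\big((a\alpha+\delta)/(b\alpha+\varepsilon)\big)=st(\delta/\varepsilon)$ carries no information about $a,b$. One must genuinely peel off the dominant term $\delta$ (resp.\ $\varepsilon$) by a \emph{standard} function — which is possible precisely because the tensor-pair hypothesis, via Puritz (Theorem \ref{4 Puritz}), guarantees $\delta$ sits at a scale strictly above $\alpha$ in a way detectable by standard functions — and then show the recovered residues $a\alpha$ and $b\alpha$ are $u$-equivalent, whence equal after cancelling $\alpha$. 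Getting the standard ``peeling'' function right, so that it returns $a\alpha$ on the left and $b\alpha$ on the right and is visibly the \emph{same} function on both (so Lemma \ref{2 proprietà generali sim}.(1) applies), is the crux; the divisibility trick (every $u$-generator is divisible by every standard integer) is the tool that sidesteps the non-standardness of the natural scale parameter $l(\alpha)$.
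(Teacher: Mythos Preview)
Your diagnosis of the obstacle is exactly right: once $\delta,\varepsilon\gg\alpha$, the ratio $st\big((a\alpha+\delta)/(b\alpha+\varepsilon)\big)$ carries no information about $a,b$, so one must somehow isolate the small summand. But neither of the two mechanisms you propose for this isolation actually works.

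\emph{Truncation.} You need a single \emph{standard} function $F$ with $F(a\alpha+\delta)=a\alpha$ and $F(b\alpha+\varepsilon)=b\alpha$, so that Lemma~\ref{2 proprietà generali sim}.(1) applies. You acknowledge that the natural cutoff scale $l(\alpha)$ is nonstandard and hence unavailable as a parameter, but you do not actually produce a substitute; the appeal to Puritz only tells you $f(\delta)>\alpha$ for every standard $f$ with $f(\delta)$ infinite, which constrains $\delta$ relative to $\alpha$ but does not give you a way to read $a\alpha$ off from the single number $a\alpha+\delta$. No such standard $F$ exists in general: from $a\alpha+\delta$ alone there is no standard recipe to locate the split, precisely because the split depends on the nonstandard scale of $\alpha$.

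\emph{Divisibility.} The claim that $\alpha,\delta,\varepsilon$ are divisible by every standard $m$ is simply false here. In Proposition~\ref{2 prop x^n + y^n = z^n-1 PR sse caso con i coefficenti} divisibility was forced because the three numbers were mutually $\sim$-equivalent \emph{and} satisfied a specific polynomial identity; in your setting $\alpha,\delta,\varepsilon$ generate three unrelated ultrafilters $\U,\mathcal W,\V$, and nothing prevents, say, $\U$ from living on the odd numbers. So the congruence argument never gets off the ground.

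The paper's route is genuinely different and sidesteps the peeling problem altogether. One embeds $\Z$ into the circle $S^{1}=\R/\Z$ via the group homomorphism $n\mapsto \pi n\bmod 1$; this sends infinite hyperintegers to nonstandard points of $\s S^{1}$ and preserves tensor pairs. The key is that on $\s S^{1}$ the Archimedean order is measured by \emph{distance to the nearest standard point}, and under this measure the tensor-pair condition \emph{reverses}: the image of $\delta$ is infinitely closer to its standard part than the image of $\alpha$ is (Proposition~\ref{3 Prop tensori in S^1}). After subtracting standard parts, the $\alpha$-contribution is now the dominant one, and the same divide-and-take-standard-part argument that proved Theorem~\ref{5 Equazioni: pezzo grande uguale} yields $a/b=1$ directly. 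The moral: rather than trying to extract the small part by a standard function, change the ambient group so that ``small'' becomes ``large''.
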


In our nonstandard language, Theorem \ref{teo small ultra} reads as follows.

\begin{theorem}\label{4 equazioni in betaN, pezzo grande}
    Let $\alpha,\beta,\gamma\in \s\N$ with $\alpha$ infinite and $(\alpha, \beta), (\alpha, \gamma)$ tensor pairs. Suppose that $\beta + a\alpha\sim \gamma + b\alpha$: then $a=b$.
\end{theorem}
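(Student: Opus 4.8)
The statement to prove is Theorem \ref{4 equazioni in betaN, pezzo grande}: if $\alpha$ is infinite, $(\alpha,\beta)$ and $(\alpha,\gamma)$ are tensor pairs, and $\beta + a\alpha \sim \gamma + b\alpha$, then $a = b$. The key structural fact is that here $\alpha$ sits in the \emph{first} coordinate of the tensor pairs, so by Corollary \ref{cor:tensimpliesarch} (Puritz's Theorem) we have $\beta \gg \alpha$ and $\gamma \gg \alpha$. This is the mirror image of the previous theorem, where $\alpha$ was the dominant term; now $\alpha$ is the \emph{small} term, and the whole point is that asymptotic methods control small pieces just as well as large ones.

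The plan is as follows. First, from $\beta \gg \alpha$ and $\gamma \gg \alpha$, Lemma \ref{3 Lemma robette}.(1) gives $\beta + a\alpha \asymp \beta$ and $\gamma + b\alpha \asymp \gamma$; combined with the hypothesis $\beta + a\alpha \sim \gamma + b\alpha$ and Theorem \ref{2 stessa classe e sim allora st=1}, this yields $st\!\left(\frac{\beta + a\alpha}{\gamma + b\alpha}\right) = 1$, hence $\beta \asymp \gamma$ and in fact $st(\beta/\gamma) = 1$. So at the coarsest scale the two sides look identical, and we must descend to a finer scale to see the coefficients $a$ and $b$. The natural move is to consider the difference $(\beta + a\alpha) - (\gamma + b\alpha) = (\beta - \gamma) + (a - b)\alpha$. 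Since $\beta + a\alpha \sim \gamma + b\alpha$, Lemma \ref{2 proprietà generali sim}.(2) (the one-dimensional case) tells us this difference is either $0$ or infinite. If it is $0$, then $\beta - \gamma = (b - a)\alpha$; since $\beta \gg \alpha$ and $\gamma \gg \alpha$ force $\beta - \gamma$ to be either $0$ or $\gg \alpha$ or $\ll -\alpha$... actually $\beta-\gamma$ being a bounded-by-$|b-a|$ multiple of $\alpha$ is only compatible with $\beta = \gamma$, and then $(b-a)\alpha = 0$ forces $a = b$ since $\alpha$ is infinite hence nonzero.

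The remaining, and main, case is when $(\beta - \gamma) + (a-b)\alpha$ is infinite and nonzero. Here I would argue that $\beta - \gamma \gg \alpha$ is impossible under the tensor-pair hypotheses, or rather I would bound $|\beta - \gamma|$ directly. The cleanest route: apply Puritz's Theorem more carefully. Since $(\alpha,\beta)$ is a tensor pair, for every $f : \N \to \N$ with $f(\beta) \notin \N$ we have $f(\beta) > \alpha$; and since $\beta \sim \gamma$ (they generate the same ultrafilter, being the images under $x \mapsto$ something... wait, we need $\beta \sim \gamma$, which follows from $\beta + a\alpha \sim \gamma + b\alpha$ only after some work). Let me instead take the difference route and estimate: write $\beta + a\alpha = \gamma + b\alpha + \delta$ with $\delta$ infinite (the nonzero-infinite case). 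Then $\beta - \gamma = (b-a)\alpha + \delta$. The hard part will be to derive a contradiction from $\delta$ being infinite, which I expect to require showing $|\beta - \gamma| \ll \beta$ (so that dividing the identity $\beta + a\alpha = \gamma + b\alpha + \delta$ by $\beta$ and comparing with the already-established $st(\beta/\gamma)=1$ pins down $\delta$). Concretely: since $st(\beta/\gamma) = 1$ we have $\beta - \gamma \ll \gamma \asymp \beta$; so $(b-a)\alpha + \delta \ll \beta$, hence $\delta \ll \beta$. But also $\delta$ is infinite and $\sim$-equivalence of $\beta + a\alpha$ and $\gamma + b\alpha$ should be leveraged: write $\beta + a\alpha = \gamma + b\alpha + \delta$ and apply an appropriate standard function. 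I believe the genuinely delicate step — and the one I would spend the most care on — is controlling $\delta$ relative to $\alpha$: the natural guess is that $\delta \asymp \alpha$ or $\delta \ll \alpha$, which together with $\beta - \gamma = (b-a)\alpha + \delta$ and the tensor-pair constraint that $\beta - \gamma$ cannot interpolate strictly between $0$ and the scale of $\alpha$ in the wrong way, forces $a = b$. Realistically, the slick proof the authors intend is probably just: $\beta + a\alpha \asymp \gamma + b\alpha$ holds (both $\asymp \beta \asymp \gamma$), these are $\sim$-equivalent, so by Theorem \ref{2 stessa classe e sim allora st=1} their ratio has standard part $1$; then note $\frac{\beta + a\alpha}{\gamma+b\alpha} = \frac{\beta/\alpha + a}{\gamma/\alpha + b}$, and since $(\alpha,\beta),(\alpha,\gamma)$ are tensor pairs with $\alpha$ first, $\beta/\alpha$ and $\gamma/\alpha$ are infinite and in fact $\sim$-equivalent infinite hypernaturals (images of $\beta,\gamma$ under division-with-remainder-type standard functions applied coordinatewise), so $st$ of the ratio equals $\lim$ of $\frac{\beta/\alpha + a}{\gamma/\alpha + b} = 1$ forces... no further info. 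So the correct key point must be finer. I would therefore reduce to the previous theorem: divide through by $\alpha$. One has $\frac{\beta}{\alpha} + a \sim \frac{\gamma}{\alpha} + b$ provided $\frac{\beta}{\alpha}, \frac{\gamma}{\alpha}$ can be realized as tensor-partners of $\alpha$ in the \emph{large} slot — which is exactly what Puritz gives, since $(\alpha,\beta)$ a tensor pair with $\beta \gg \alpha$ means (after replacing $\beta$ by $\beta = \lfloor \beta/\alpha \rfloor \alpha + (\beta \bmod \alpha)$ via standard functions) that $(\lfloor\beta/\alpha\rfloor, \alpha)$ is a tensor pair of the previous kind. Then the previous theorem (the one with $\beta + a\alpha \sim \gamma + b\alpha$, $(\beta,\alpha),(\gamma,\alpha)$ tensor pairs) applies verbatim and gives $a = b$. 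So the real content is the passage from "$\alpha$ in the first slot" to "$\alpha$ in the second slot after dividing by $\alpha$", and I expect that to be the main obstacle — making precise, via Lemma \ref{2 proprietà generali sim}.(3) and Puritz, that $\lfloor\beta/\alpha\rfloor$ and $\alpha$ form a tensor pair with the roles reversed.
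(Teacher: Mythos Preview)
Your proposal has a genuine gap in the final reduction step. You want to divide through by $\alpha$ and claim that $(\lfloor\beta/\alpha\rfloor,\alpha)$ is a tensor pair so that the previous theorem applies. This fails on two counts. First, the map $\beta\mapsto\lfloor\beta/\alpha\rfloor$ is \emph{not} a standard function: it depends on the nonstandard parameter $\alpha$, so Lemma \ref{2 proprietà generali sim}.(1) cannot be used to carry the $\sim$-equivalence $\beta+a\alpha\sim\gamma+b\alpha$ to $\lfloor\beta/\alpha\rfloor+a\sim\lfloor\gamma/\alpha\rfloor+b$. Second, even granting that, there is no mechanism to deduce that $(\lfloor\beta/\alpha\rfloor,\alpha)$ is a tensor pair: Theorem \ref{3 Teo su come son fatti i tensori in generale} only lets you push tensor pairs forward along \emph{coordinatewise} standard maps $(f(\alpha),g(\beta))$, and $\lfloor\beta/\alpha\rfloor$ mixes the two coordinates. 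The earlier parts of your plan (showing $\beta\asymp\gamma$, $st(\beta/\gamma)=1$, and that the difference is $0$ or infinite) are correct but, as you yourself noticed, give no traction on $a$ versus $b$ at the scale of $\beta$.

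The paper takes a completely different route: it embeds $\Z$ into the circle $S^{1}=\R/\Z$ via $f(n)=\pi n\bmod 1$. The point is that in $\s S^{1}$ the Archimedean scale is measured by $1/|x-st(x)|$, and Proposition \ref{3 Prop tensori in S^1} shows that a tensor pair $(\alpha,\beta)$ in $\s S^{1}\setminus S^{1}$ has the \emph{second} coordinate infinitesimally closer to a standard point than the first. So after applying $f$, the infinitesimal parts satisfy $\beta',\gamma'\ll\alpha'$: the roles are reversed, $\alpha'$ is now the dominant term, and the argument of Theorem \ref{5 Equazioni: pezzo grande uguale} goes through verbatim to give $st\bigl(\frac{\beta'+a\alpha'}{\gamma'+b\alpha'}\bigr)=a/b=1$. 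The irrationality of $\pi$ is what guarantees $f(\alpha)\notin S^{1}$ when $\alpha$ is infinite, so that the $S^{1}$-Archimedean machinery applies.
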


\begin{rmrk}
    It is not restrictive to assume that $\beta,\gamma $ also are infinite (the result is otherwise trivial).
\end{rmrk}

For the proof we need to work in a more general context. We already noticed, in Remark \ref{1 rmrk cose tornano in R}, that we may define the concept of Archimedean classes for hyperreals $\s\R$ as we did for $\s\N$. Also, the proof of Theorem \ref{2 stessa classe e sim allora st=1} still works in $\s\R$, so the ration of equivalent hyperreals $\alpha\sim\beta$ in the same Archimedean class must be at an infinitesimal distance from 1. 
\begin{defn}
     Let $\alpha,\beta\in \s S^1\setminus S^1$, where $S^1=\R/\Z$ is the unitary circle. We say that they are in the same Archimedean class if $\frac{1}{\vert \alpha - st(\alpha)\vert}\asymp \frac{1}{\vert \beta - st(\beta)\vert}$ in $\s\R$.
\end{defn}

\begin{rmrk}\label{rem est fraz 1}
    It is easy to check that if $\alpha, \beta $ are as above and equivalent, then $st(\frac{\alpha}{\beta})=1$.
\end{rmrk}

We also need to characterize tensor pairs in $\s S^1$. Notice that if $s\in S^1$ is a standard point and $\alpha\in \s S^1\setminus S^1$, $(s, \alpha) $ and $(\alpha, s)$ are always tensors in a trivial way. So we can suppose both coordinates to be nonstandard. We will use the following two results proven in \cite{luperi2019tensor}.

\begin{theorem}[\cite{luperi2019tensor}, Theorem 31]\label{3 Teo su come son fatti i tensori in generale}
    Let $A,B,A^{\prime},B^{\prime}$ be sets. Let $(\alpha, \beta)\in \s A \times \s B$. Then the following are equivalent:
    \begin{enumerate}
        \item $(\alpha, \beta)$ is a tensor pair;
        \item for every $f:A\to A^{\prime}$, $g:B\to B^{\prime}$ the couple $(f(\alpha), g(\beta))$ is a tensor pair in $\s A^{\prime} \times \s B^{\prime}$. 
    \end{enumerate}
\end{theorem}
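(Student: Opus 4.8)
The plan is to route the whole argument through the compatibility of the \emph{pushforward} operation on ultrafilters with the tensor product, so that no nonstandard machinery beyond the generator dictionary is needed. The implication $(2)\Rightarrow(1)$ is immediate: taking $f=\mathrm{id}_A$ and $g=\mathrm{id}_B$ in $(2)$ is literally the statement that $(\alpha,\beta)$ is a tensor pair.

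For $(1)\Rightarrow(2)$, I would first recall the standard fact that, for any function $h$ on a set $X$ and any $\gamma\in\s X$, the point $h(\gamma)$ generates the pushforward ultrafilter $h(\U_\gamma)=\{S\mid h^{-1}(S)\in\U_\gamma\}$; this is immediate from the definition of generator (and is, in effect, the ultrafilter rephrasing of Lemma \ref{2 proprietà generali sim}). Applying it to $h=f\times g$ and to the pair $(\alpha,\beta)\in\s A\times\s B$ shows that $(f(\alpha),g(\beta))=(f\times g)(\alpha,\beta)$ generates $(f\times g)(\U_{(\alpha,\beta)})$, hence, since by hypothesis $\U_{(\alpha,\beta)}=\U_\alpha\otimes\U_\beta$, it generates $(f\times g)(\U_\alpha\otimes\U_\beta)$. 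On the other hand $(f(\alpha),g(\beta))$ is a tensor pair exactly when it generates $\U_{f(\alpha)}\otimes\U_{g(\beta)}=f(\U_\alpha)\otimes g(\U_\beta)$. So the theorem reduces to the purely combinatorial identity
\[(f\times g)(\U\otimes\V)=f(\U)\otimes g(\V)\qquad\text{for all }\U\in\beta X,\ \V\in\beta Y,\]
where $\otimes$ over arbitrary sets is defined by the same formula as over $\N^2$.

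To verify this identity I would unwind the definition of $\otimes$ on both sides: $C\in(f\times g)(\U\otimes\V)$ iff $(f\times g)^{-1}(C)\in\U\otimes\V$ iff $\{n\in X\mid \{m\in Y\mid (f(n),g(m))\in C\}\in\V\}\in\U$, whereas $C\in f(\U)\otimes g(\V)$ iff $\{n'\mid \{m'\mid (n',m')\in C\}\in g(\V)\}\in f(\U)$, and rewriting the outer membership as a preimage condition along $f$ and the inner one as a preimage condition along $g$ produces exactly the same set in $\U$. I expect the only genuine work to be this bookkeeping with the nested quantifiers over the two coordinates; in particular no saturation and no Puritz-type characterization is required — and none could be invoked directly, since Theorem \ref{4 Puritz} is stated only over $\N$ while $A,B,A',B'$ are arbitrary sets.
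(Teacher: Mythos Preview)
Your argument is correct. The paper does not supply its own proof of this theorem; it is quoted verbatim from \cite{luperi2019tensor} and used as a black box, so there is nothing to compare against here. Your reduction to the pushforward identity $(f\times g)(\U\otimes\V)=f(\U)\otimes g(\V)$ is the natural route, and your unwinding of the nested membership conditions is accurate.

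One small caveat on the easy direction: your $(2)\Rightarrow(1)$ via $f=\mathrm{id}_A$, $g=\mathrm{id}_B$ tacitly assumes $A'=A$ and $B'=B$. As the theorem is literally typeset, $A',B'$ are introduced as fixed sets, and for degenerate choices (e.g.\ $A'$ a singleton) condition $(2)$ becomes vacuous while $(1)$ need not hold. The intended reading --- and the one in the cited source --- is that $A',B'$ range over all sets, in which case your identity trick is exactly right. This is a wording issue in the restatement, not a defect in your proof.
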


\begin{rmrk}[\cite{luperi2019tensor}, Example 37]\label{3 rmrk tensori in R}
    If $\alpha, \beta$ are infinite positive numbers in $\s\R$, then $(\alpha, \beta )$ is a tensor pair if and only if for every $f:\R\to \R$ such that $f(\beta)$ infinite, $\alpha < f(\beta)$.  
\end{rmrk}

\begin{prop}\label{3 Prop tensori in S^1}
    Let $(\alpha, \beta) $ be a couple in $\s S^1\setminus S^1$. $(\alpha, \beta)$ is a tensor pair if and only if $(\frac{1}{\lvert\alpha - st(\alpha)\rvert}, \frac{1}{\lvert \beta - st(\beta)\rvert})\in \s \R \times \s \R$ is a tensor pair.\\    
    In particular, $\beta - st(\beta)$ is infinitesimal with respect to $\alpha - st(\alpha)$, i.e. $st\left(\frac{\beta - st(\beta)}{\alpha - st(\alpha)}\right)=0$.
\end{prop}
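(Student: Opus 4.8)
The plan is to reduce the claim about tensor pairs in $\s S^{1}\setminus S^{1}$ to the already-established characterization of tensor pairs in $\s\R$ (Remark \ref{3 rmrk tensori in R}), using the general functoriality of tensor pairs (Theorem \ref{3 Teo su come son fatti i tensori in generale}). The map one wants to exploit is $\varphi\colon S^{1}\setminus\{s\}\to\R$ roughly of the form $x\mapsto \frac{1}{|x-s|}$; applied coordinatewise, it should turn a tensor pair $(\alpha,\beta)$ into the pair $(\frac{1}{|\alpha-st(\alpha)|},\frac{1}{|\beta-st(\beta)|})$. The subtlety is that $st(\alpha)$ and $st(\beta)$ are not fixed standard points, so one cannot literally apply a single standard function; the clean way is to work locally, splitting $S^{1}$ into finitely many standard arcs, noting that $\alpha$ lies in some fixed standard (closed) arc $J$ with $st(\alpha)\in J$, and on that arc the recentering $x\mapsto \frac{1}{|x-st(\alpha)|}$ (or a smoothed standard variant that agrees with it near $st(\alpha)$) \emph{is} a standard function, and similarly for $\beta$. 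Then Theorem \ref{3 Teo su come son fatti i tensori in generale} applies with $A=B=S^{1}$, $A'=B'=\R$ and the chosen local functions, giving one direction.

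First I would fix the standard points $a=st(\alpha)$, $b=st(\beta)\in S^{1}$ (these exist since $\s S^{1}$ is compact and nonstandard points are infinitely close to standard ones). Pick standard open arcs $U\ni a$, $V\ni b$ whose closures omit at least one point each, and fix standard functions $f\colon S^{1}\to\R$, $g\colon S^{1}\to\R$ with $f(x)=\frac{1}{|x-a|}$ for $x$ in a neighbourhood of $a$ and $g(y)=\frac{1}{|y-b|}$ near $b$ (for instance, extend by a constant outside). Since $\alpha$ is infinitely close to $a$ and $\beta$ to $b$, transfer gives $f(\alpha)=\frac{1}{|\alpha-st(\alpha)|}$ and $g(\beta)=\frac{1}{|\beta-st(\beta)|}$, both infinite hyperreals. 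Now Theorem \ref{3 Teo su come son fatti i tensori in generale} (direction $(1)\Rightarrow(2)$) immediately yields that $(\frac{1}{|\alpha-st(\alpha)|},\frac{1}{|\beta-st(\beta)|})$ is a tensor pair in $\s\R\times\s\R$.

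For the converse I would run essentially the same argument backwards: if $(\frac{1}{|\alpha-st(\alpha)|},\frac{1}{|\beta-st(\beta)|})$ is a tensor pair, I want to recover $(\alpha,\beta)$ as the image of it under standard functions. Here one uses partial inverses of $f$ and $g$: near $+\infty$ the map $t\mapsto a+\frac{1}{t}$ (choosing the branch on the correct side of $a$ determined by whether $\alpha>a$ or $\alpha<a$ in the local chart — a finite case distinction that transfer preserves) is standard and sends $\frac{1}{|\alpha-st(\alpha)|}$ back to $\alpha$, and likewise for $\beta$. Applying Theorem \ref{3 Teo su come son fatti i tensori in generale} again gives that $(\alpha,\beta)$ is a tensor pair. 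The ``in particular'' clause is then immediate: a tensor pair $(\mu,\nu)$ of infinite positive hyperreals satisfies $\nu\gg\mu$ by Corollary \ref{cor:tensimpliesarch} (or directly by Remark \ref{3 rmrk tensori in R}), so with $\mu=\frac{1}{|\alpha-st(\alpha)|}$, $\nu=\frac{1}{|\beta-st(\beta)|}$ we get $\frac{1}{|\beta-st(\beta)|}\gg\frac{1}{|\alpha-st(\alpha)|}$, i.e. $|\beta-st(\beta)|\ll|\alpha-st(\alpha)|$, hence $st\!\left(\frac{\beta-st(\beta)}{\alpha-st(\alpha)}\right)=0$.

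The main obstacle is the bookkeeping around the non-fixed centres $st(\alpha)$, $st(\beta)$: one must be careful that the ``local'' standard functions are genuinely standard (defined on all of $S^{1}$, independent of the nonstandard point) and that they agree with the intended formula on the relevant infinitesimal neighbourhood so that transfer computes the right value. Once this is set up correctly — e.g. by the finite-arc decomposition sketched above, or equivalently by noting that the whole statement is preserved under the standard rotations of $S^{1}$, so one may assume $st(\alpha)=st(\beta)=0$ and work with the single standard chart near $0$ — the rest is a direct appeal to the cited results with no real computation.
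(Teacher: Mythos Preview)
Your approach is essentially the paper's: apply Theorem \ref{3 Teo su come son fatti i tensori in generale} to the standard maps $f(x)=\frac{1}{|x-st(\alpha)|}$ and $g(x)=\frac{1}{|x-st(\beta)|}$, and read off the ``in particular'' from Remark \ref{3 rmrk tensori in R}. Your extra care about extending $f,g$ to all of $S^{1}$ and about the branch choice for the converse is legitimate bookkeeping that the paper elides; your initial worry that $st(\alpha),st(\beta)$ are ``not fixed standard points'' is misstated (once $\alpha,\beta$ are given they are perfectly good standard elements of $S^{1}$, so $f,g$ \emph{are} standard functions), but you resolve this yourself a line later by setting $a=st(\alpha)$, $b=st(\beta)$.
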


\begin{proof}
    It is enough to apply Theorem \ref{3 Teo su come son fatti i tensori in generale} with $f(x)=\frac{1}{\lvert x - st(\alpha)\rvert}$ and $g(x)=\frac{1}{\lvert x - st(\beta)\rvert}$. 
    The ``in particular'' part follows directly from Remark \ref{3 rmrk tensori in R}.\end{proof}

\begin{proof}[Proof of Theorem \ref{4 equazioni in betaN, pezzo grande}]

    Let $f$ be the composition of the multiplication by $\pi$ with the projection $\R\to S^1=\R/\Z$. $f$ is an algebraic embedding of $\Z $ in $S^1$. \\
    
    \noindent {\bfseries Claim: }if $\alpha\in \s\Z \setminus \Z$ then $f(\alpha)\in \s S^1 \setminus S^1$.\\
    
    To prove the claim, suppose there exists $\alpha$ infinite such that $f(\alpha)\in r + \Z$ with $r\in [0,1)$. In particular for every $n\in \N$ $$\s\N \models \exists m>n\ (f(m)=r).$$
    By transfer: $$\N \models \exists m>n\ (f(m)=r),$$ which is not possible because $f$ is injective.

    So, suppose $(\alpha,\beta)$ and $(\alpha, \gamma)$ are tensor pairs in $\s S^1\setminus S^1$ such that $\beta + a\alpha\sim \gamma + b \alpha$. 
    
As $\beta + a\alpha\sim \gamma + b \alpha$, we have that $st(\beta)+a\cdot st(\alpha)=st(\beta + a\alpha)=st(\gamma + b \alpha)=st(\gamma) + b\cdot st( \alpha)$. Let $\alpha^{\prime},\beta^{\prime},\gamma^{\prime}$ denote respectively $\alpha-st(\alpha),\beta-st(\beta),\gamma-st(\gamma)$. We have that 
$$\beta + a\alpha-st(\beta+a\alpha)=\beta^{\prime}+a\alpha^{\prime}\sim \gamma^{\prime}+b\alpha^{\prime}=\gamma+b\alpha-st(\gamma+b\alpha).$$
$\alpha^{\prime},\beta^{\prime},\gamma^{\prime}$ are infinitesimals by construction and, by Proposition \ref{3 Prop tensori in S^1}, $\beta^{\prime},\gamma^{\prime}\ll\alpha^{\prime}$ by definition of $\asymp $ on $\s S^1\setminus S^1$. Hence $\beta^{\prime} + a \alpha^{\prime} \asymp  \gamma^{\prime} + b\alpha^{\prime}$. As $\beta^{\prime} + a \alpha^{\prime} \asymp a \alpha^{\prime}$ and $\gamma^{\prime} + a \alpha^{\prime} \asymp b \alpha^{\prime}$, it follows that $\frac{1}{\beta^{\prime}+a\alpha^{\prime}}\asymp\frac{1}{\gamma^{\prime}+b\alpha^{\prime}}$. 

    So in $\s S^1$ we would have $\beta^{\prime} + a\alpha^{\prime}\sim \gamma^{\prime} + b\alpha^{\prime}$ and in the same Archimedean class: so by Remark \ref{rem est fraz 1}
    \begin{equation*}
    1=st\left(\frac{\beta^{\prime} + a\alpha^{\prime}}{\gamma^{\prime} + b\alpha^{\prime}}\right)=\frac{a}{b}. \qedhere
    \end{equation*}
\end{proof}

\subsection{Archimedean closeness of ultrafilters}

If we identify ultrafilters with hypernatural numbers, it is natural to ask when two ultrafilters have generators that are in the same Archimedean class. Equivalently, given $\alpha\in\s\N$, for what $\beta\in \s\N$ there exists $\beta ' \sim \beta$ with $\beta^{\prime}\asymp\alpha$. This question is better expressed in ultrafilter terms, giving rise to the following definition:

\begin{defn}

    Let $\U$ be an ultrafilter on $\N$. We define its Archimedean class $$Arc(\U)=\{\V\in \beta \N \mid \exists m\in \Z\ :\  l_*(\V)=l_*(\U) + m\}$$ where $l=l_2$ and $l_*$ is the pushforward of $l$ to $\beta\N$.
    
\end{defn}

Ultrafilters in the Archimedean class of $\U$ are exactly those generated by $\beta$ as above, where $\alpha\models\U$. 

\begin{lemma}

    Fix $\alpha$ and $\beta$. There exists $\beta'\sim \beta$ in $Arc(\alpha)$ if and only if $\U_\beta\in Arc(\U_\alpha)$.
    
\end{lemma}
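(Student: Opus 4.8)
The plan is to reduce the ultrafilter-side condition $\U_\beta\in Arc(\U_\alpha)$ to a statement about generators in $\s\N$, and then to combine the characterization of $\asymp$ by integer logarithms (Proposition \ref{3 stessa classe sse log a distanza finita}) with the basic functoriality and lifting properties of $\sim$ (Lemma \ref{2 proprietà generali sim}).

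First I would unwind the definition of $Arc(\U)$. The pushforward of a generated ultrafilter $\U_\gamma$ along any map is again a generated ultrafilter, namely the one generated by the image of $\gamma$; applying this to $l:\N\to\N$ and to the shift $x\mapsto x+m$ (defined on a cofinite subset of $\N$, which suffices as $l(\alpha)$ is infinite, or directly via the trivial tensor-pair instance of Theorem \ref{4 Teo su somma di ultrafiltri come somma di ipernaturali} when $m\geq 0$) shows that $l_*(\U_\beta)=\U_{l(\beta)}$ and $l_*(\U_\alpha)+m=\U_{l(\alpha)+m}$. Hence $l_*(\U_\beta)=l_*(\U_\alpha)+m$ is equivalent to $\U_{l(\beta)}=\U_{l(\alpha)+m}$, i.e. to $l(\beta)\sim l(\alpha)+m$. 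Therefore $\U_\beta\in Arc(\U_\alpha)$ holds precisely when $l(\beta)\sim l(\alpha)+m$ for some $m\in\Z$. One may assume $\alpha,\beta$ are infinite, the remaining cases being immediate from the definitions.

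For the forward implication I would argue as follows: if $\beta'\sim\beta$ lies in $Arc(\alpha)$, then $\beta'\asymp\alpha$, so $l(\beta')-l(\alpha)=m\in\Z$ by Proposition \ref{3 stessa classe sse log a distanza finita}.(4); since $l(\beta')\sim l(\beta)$ by Lemma \ref{2 proprietà generali sim}.(1), this gives $l(\beta)\sim l(\beta')=l(\alpha)+m$, hence $\U_\beta\in Arc(\U_\alpha)$. For the converse, assume $l(\beta)\sim l(\alpha)+m$ for some $m\in\Z$ and set $c:=l(\alpha)+m\in\s\N$. Applying Lemma \ref{2 proprietà generali sim}.(3) to the function $l:\N\to\N$, with $l(\beta)\sim c$, yields $\gamma\sim\beta$ with $l(\gamma)=c=l(\alpha)+m$; then $l(\gamma)-l(\alpha)\in\Z$, so $\gamma\asymp\alpha$ again by Proposition \ref{3 stessa classe sse log a distanza finita}.(4), that is $\gamma\in Arc(\alpha)$, and $\beta'=\gamma$ is the required element.

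The only step carrying genuine content is the lifting in the converse direction: one needs a generator of $\U_\beta$ whose integer logarithm is \emph{exactly} $l(\alpha)+m$, not merely $\sim$-equivalent to it, and this is precisely what Lemma \ref{2 proprietà generali sim}.(3) provides. Everything else is bookkeeping with the definition of $Arc(\U)$ via the pushforward of $l$ and with Proposition \ref{3 stessa classe sse log a distanza finita}, so I do not expect any further obstacle.
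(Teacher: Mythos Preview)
Your proposal is correct and follows essentially the same route as the paper: translate $\U_\beta\in Arc(\U_\alpha)$ into $l(\beta)\sim l(\alpha)+m$ via the pushforward description of $l_*$, use Proposition \ref{3 stessa classe sse log a distanza finita} to pass between $\asymp$ and finite difference of logarithms, and for the converse invoke the lifting property Lemma \ref{2 proprietà generali sim}.(3) to produce $\beta'\sim\beta$ with $l(\beta')=l(\alpha)+m$. The paper's proof is terser but structurally identical.
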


\begin{proof}

    If $\beta\sim\beta' \in Arc(\alpha) $ then $l(\beta') = l(\alpha) + m$, $m \in \Z$. Since $\U_{f(\alpha)}=f_*(\U)$ (Theorem \ref{4 Teo su somma di ultrafiltri come somma di ipernaturali}) we have $$l_*(\U_{\beta})=l_*({\U_{\beta'}})=\U_{l(\beta')}=\U_{l(\alpha ) + m}=\U_{l(\alpha)} + m=l_*(\U_\alpha) + m$$

    On the other hand, if $\U_\beta\in Arc(\U_\alpha) $ then $l_*(\U_\alpha)=l_*(\U_\beta) + m $ which means that $l(\alpha)\sim l(\beta) + m$. But now invoke Lemma \ref{2 proprietà generali sim} to find the desired $\beta'$. 
\end{proof}

Using the nonstandard equivalent, we can easily see that 
    \begin{prop}
    Let $n\in \N$ and suppose $l_*(\V)=l_*(\U) + k$. Then there exists $k'\in \Z$ such that $(l_n)_*(\V)=(l_n)_*(\U) + k'$. 
    \end{prop}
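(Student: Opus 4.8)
The statement asserts that Archimedean closeness is independent of the base used to define the pushforward integer logarithm: if $l_*(\V) = l_*(\U) + k$ for the base-$2$ logarithm, then $(l_n)_*(\V) = (l_n)_*(\U) + k'$ for some $k' \in \Z$, for any base $n \geq 2$. The plan is to transfer everything to the nonstandard side, where the statement becomes a clean assertion about a single pair of hypernaturals, and then invoke Proposition \ref{3 stessa classe sse log a distanza finita}, which already tells us that ``$l_2$-logarithms at finite distance'' and ``$l_n$-logarithms at finite distance'' are both equivalent to being in the same Archimedean class.

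First I would pick a generator $\alpha \models \U$ and a generator $\beta \models \V$. By Theorem \ref{4 Teo su somma di ultrafiltri come somma di ipernaturali} (more precisely the identity $\U_{f(\alpha)} = f_*(\U_\alpha)$ used there), the hypothesis $l_*(\V) = l_*(\U) + k$ says exactly that $\U_{l(\beta)} = \U_{l(\alpha) + k}$, i.e. $l(\beta) \sim l(\alpha) + k$. By Lemma \ref{2 proprietà generali sim}.(2) this forces $l(\beta) = l(\alpha) + k$ or $|l(\beta) - l(\alpha) - k|$ infinite; but since we are told the difference is the \emph{fixed integer} $k$ on the ultrafilter side — equivalently, after replacing $\beta$ by an equivalent generator as in the Lemma preceding this Proposition, we may assume $l(\beta) - l(\alpha) = k \in \Z$. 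In particular $l_2(\alpha) - l_2(\beta) \in \Z$, so by Proposition \ref{3 stessa classe sse log a distanza finita}.(4)$\Rightarrow$(3), we get $l_n(\alpha) - l_n(\beta) \in \Z$ for every $n \geq 2$; call this integer $-k'$, so $l_n(\beta) = l_n(\alpha) + k'$.

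It remains to push this back up to $\beta\N$. Applying $\U_{f(\cdot)} = f_*(\cdot)$ once more with $f = l_n$ gives $(l_n)_*(\V) = \U_{l_n(\beta)} = \U_{l_n(\alpha) + k'} = \U_{l_n(\alpha)} + k' = (l_n)_*(\U) + k'$, which is exactly the desired conclusion. I would spell out that the shift $\U_{\gamma + k'} = \U_\gamma + k'$ is just the definition of the $\Z$-action on $\beta\N$ coming from the standard translation $x \mapsto x + k'$, consistent with how $l_*(\U_\alpha) + m$ is used in the definition of $Arc(\U)$.

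The only genuine subtlety — and the step I would be most careful about — is the legitimacy of replacing $\beta$ by a $\sim$-equivalent generator so that the difference of logarithms is an honest integer rather than merely ``finite in the ultrafilter sense''. This is precisely the content built into the preceding Lemma's proof (``invoke Lemma \ref{2 proprietà generali sim} to find the desired $\beta'$''), so I would simply cite that: once $l_*(\V) = l_*(\U) + k$ we know $l(\beta) \sim l(\alpha) + k$, and Lemma \ref{2 proprietà generali sim}.(3) produces $\beta' \sim \beta$ with $l(\beta') = l(\alpha) + k$ exactly; since $\U_{\beta'} = \U_\beta = \V$, we may work with $\beta'$ throughout. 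Everything else is a routine application of the base-change equivalence already proved in Proposition \ref{3 stessa classe sse log a distanza finita}.
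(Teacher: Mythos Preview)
Your proposal is correct and follows exactly the approach the paper indicates: it states only ``Using the nonstandard equivalent, we can easily see that\dots'' without giving details, and your argument is precisely the intended unpacking --- pass to generators, use Lemma~\ref{2 proprietà generali sim}.(3) to arrange $l(\beta') = l(\alpha) + k$ exactly, invoke the base-independence in Proposition~\ref{3 stessa classe sse log a distanza finita}, and push back to $\beta\N$. Your handling of the one genuine subtlety (replacing $\beta$ by an equivalent $\beta'$) is also the same step used in the proof of the preceding Lemma, so everything is in order.
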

So, defining Archimedean classes of ultrafilters via a different base does not change them.\\

We may ask if we can characterize $Arc(\U)$ and what kind of ultrafilters we may find, or not, in it. We start with the second question.

\begin{prop}
    If $n\in\N$ and $\alpha^n\sim \beta \asymp \alpha$, then $n=1$.
\end{prop}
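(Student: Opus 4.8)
The plan is to linearise the $n$-th power by passing to integer logarithms, and then combine the resulting arithmetic identity with Theorem~\ref{2 stessa classe e sim allora st=1}. Throughout we work with $\alpha$ infinite (hence $\beta$ infinite as well), which is the relevant case here, since we are dealing with non-principal ultrafilters; for $\alpha\in\N$ the statement is in fact false, e.g.\ $\alpha=2$, $\beta=4$, $n=2$.

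First I would apply $l:=l_2$ to both hypotheses. From $\alpha^n\sim\beta$ and Lemma~\ref{2 proprietà generali sim}.(1) we obtain $l(\alpha^n)\sim l(\beta)$. By Remark~\ref{basic prop log}, transferred and using that $\{0,\dots,n-1\}$ is a finite standard set, there is a standard $\delta\in\{0,\dots,n-1\}$ with $l(\alpha^n)=n\,l(\alpha)+\delta$; and from $\beta\asymp\alpha$ together with Proposition~\ref{3 stessa classe sse log a distanza finita}.(3) there is $m\in\Z$ with $l(\beta)=l(\alpha)+m$. Substituting, we get the $u$-equivalence
$$n\,l(\alpha)+\delta\ \sim\ l(\alpha)+m .$$
Next I would note that both sides of this relation lie in the Archimedean class of $l(\alpha)$: indeed $n\,l(\alpha)\asymp l(\alpha)$ because $n\in\N$, and since $l(\alpha)$ is infinite while $\delta$ and $m$ are finite, Lemma~\ref{3 Lemma robette}.(1) shows that adding $\delta$ (resp.\ $m$) does not change the Archimedean class. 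Hence $n\,l(\alpha)+\delta\asymp l(\alpha)+m$, and these two hypernaturals are also $u$-equivalent, so Theorem~\ref{2 stessa classe e sim allora st=1} yields
$$st\!\left(\frac{n\,l(\alpha)+\delta}{l(\alpha)+m}\right)=1 .$$
On the other hand, dividing numerator and denominator by the infinite number $l(\alpha)$ and discarding the infinitesimal terms $\delta/l(\alpha)$ and $m/l(\alpha)$, the standard part of the left-hand side equals $n$. Therefore $n=1$.

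The delicate point --- and the reason the proposition is not entirely trivial --- is that the weaker Lemma~\ref{2 proprietà generali sim}.(2), asserting that two $u$-equivalent hypernaturals are either equal or infinitely far apart, is useless here: for $n\ge 2$ the two sides of the displayed $\sim$-relation do differ by the infinite quantity $(n-1)l(\alpha)+(\delta-m)$, so that dichotomy is satisfied vacuously. What actually does the work is the quantitative refinement in Theorem~\ref{2 stessa classe e sim allora st=1}: once one also knows that the two sides are $\asymp$-equivalent, their ratio is forced to be infinitely close to $1$, which is incompatible with its standard part being $n$. Thus the step to get right is the $\asymp$-claim above, where the finiteness of the exponent $n$ is used in an essential way.
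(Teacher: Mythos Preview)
Your proof is correct and follows essentially the same route as the paper's own argument: apply the integer logarithm, use $\beta\asymp\alpha$ to get $l(\beta)=l(\alpha)+m$, use $\alpha^n\sim\beta$ to get $n\,l(\alpha)+\delta\sim l(\alpha)+m$, observe both sides are $\asymp$-equivalent, and invoke Theorem~\ref{2 stessa classe e sim allora st=1} to force the ratio's standard part to equal~$1$, hence $n=1$. Your version is in fact slightly more careful than the paper's (you make explicit the need for $\alpha$ to be infinite, and you justify the $\asymp$-step rather than just asserting it).
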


\begin{proof}

    Let apply $l$ to find $$ l(\alpha) + m= l(\beta) \sim l(\alpha^n)=n l(\alpha) + r$$ for some $r\in \Z$.
    So, $l(\alpha) + m$ and $nl(\alpha)$ are in the same class; and since they are equivalent we must have 
    \[
    1= st\left(\frac{ l(\alpha) + m}{n l(\alpha) + r }\right) = \frac{1}{n} 
    \]
    
    so $n=1 $. 
\end{proof}

The same proof also adapts to the following:

\begin{prop}
    If $\xi\in \s\R_{>0}$ if finite and not infinitesimal, and $\alpha^\xi\sim \beta \asymp \alpha$, then $st(\xi)=1$.
\end{prop}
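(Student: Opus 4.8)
The plan is to run the proof of the preceding proposition almost verbatim, with $n$ replaced by $\xi$ and with a little extra care in the integer-logarithm bookkeeping. Throughout I take $\alpha$ to be infinite, which is the relevant case here (for finite $\alpha$ the statement is false: e.g.\ $\alpha=2$, $\beta=4=\alpha^{\xi}$ with $st(\xi)=2$).

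First I would note that $\alpha^{\xi}$ is itself a hypernatural: since $\beta\in{}^{*}\N$ and $\alpha^{\xi}\sim\beta$, Definition \ref{1 def sim} applied with $A=\N$ forces $\alpha^{\xi}\in{}^{*}\N$. Hence $l(\alpha^{\xi})$ makes sense, and by Lemma \ref{2 proprietà generali sim}.(1), $l(\alpha^{\xi})\sim l(\beta)$; moreover, from $\beta\asymp\alpha$ and Proposition \ref{3 stessa classe sse log a distanza finita}.(3) I get $l(\beta)=l(\alpha)+m$ for some $m\in\Z$.

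The only step that differs from the integer case is the evaluation of $l(\alpha^{\xi})$. Writing $\log_{2}\alpha=l(\alpha)+\theta$ with $\theta\in[0,1)$ and using (by transfer) $\log_{2}(\alpha^{\xi})=\xi\log_{2}\alpha$, I get $\log_{2}(\alpha^{\xi})=\xi\, l(\alpha)+\xi\theta$, where $\xi\theta$ is finite because $\xi$ is finite; rounding down, $l(\alpha^{\xi})=\xi\, l(\alpha)+\zeta$ with $\zeta$ finite (within distance $1$ of $\xi\theta$). So, setting $\lambda:=l(\alpha)$, which is infinite, I obtain the $\sim$-equivalence of hypernaturals $\xi\lambda+\zeta\sim\lambda+m$.

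To finish I would compute
\[
\frac{\xi\lambda+\zeta}{\lambda+m}=\frac{\xi+\zeta/\lambda}{1+m/\lambda},
\]
whose standard part is $st(\xi)$, a positive real, so $\xi\lambda+\zeta$ and $\lambda+m$ lie in the same Archimedean class; Theorem \ref{2 stessa classe e sim allora st=1} then gives $st\!\left(\frac{\xi\lambda+\zeta}{\lambda+m}\right)=1$, i.e.\ $st(\xi)=1$. The main (indeed only) delicate point is the treatment of $l(\alpha^{\xi})$: one must observe that $\alpha^{\xi}$ really is a hypernatural so that $l$ applies, and that $l(\alpha^{\xi})$ agrees with $\xi\, l(\alpha)$ up to a \emph{finite} — though no longer necessarily integer — error; once this is in hand the argument is identical to the integer-exponent case. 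Alternatively, one can avoid the first observation entirely by replacing $l$ with $\log_{2}\colon{}^{*}\R_{>0}\to{}^{*}\R$ throughout and invoking the ${}^{*}\R$ version of Theorem \ref{2 stessa classe e sim allora st=1} provided by Remark \ref{1 rmrk cose tornano in R}.
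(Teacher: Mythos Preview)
Your proof is correct and is exactly the adaptation the paper has in mind: the paper gives no separate proof but simply notes that ``the same proof also adapts,'' and you have carried this out, with the only nontrivial additional observation being that $l(\alpha^{\xi})=\xi\,l(\alpha)+\zeta$ for some \emph{finite} (though possibly nonintegral) $\zeta$. Your remark that the statement tacitly assumes $\alpha$ infinite, and the alternative of working with $\log_{2}$ in ${}^{*}\R$ via Remark~\ref{1 rmrk cose tornano in R}, are both apt.
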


%\begin{proof}

    %Let apply $l$ to find $$ l(\alpha) + m= l(\beta) \sim l(\alpha^\xi)=\xi l(\alpha)$$
   % Since $\xi $ is finite, $l(\alpha) + m$ and $\xi l(\alpha)$ are in the same class; and since they are equivalent we must have 
    %\[
    %1= st\left(\frac{ l(\alpha) + m}{\xi l(\alpha) }\right) = \frac{1}{st(\xi)} 
    %\]
    
    %so $st(\xi)=1 $. 
%\end{proof}

\begin{coroll}

    If $f_r:x \to x^r$ for $r\in \R$, then $(f_{r})_{*}(\U)=\U^r\notin Arc(\U)$, unless $r=1$. 
    
    Similarly for $P$ a polynomial of degree different from 1.
    
\end{coroll}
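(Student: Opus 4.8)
The plan is to reduce the Corollary to the two Propositions immediately above, together with the Lemma characterizing membership in $Arc(\U)$ through the existence of a $\sim$-equivalent generator lying in a prescribed Archimedean class. Throughout I would fix a generator $\alpha\models\U$; since $\U$ is non-principal, $\alpha$ is infinite, and recall that $f(\alpha)$ generates $f_*(\U)$ for any $f$ (Theorem \ref{4 Teo su somma di ultrafiltri come somma di ipernaturali}).

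For the monomial $f_r:x\mapsto x^r$, taking $r>0$ (for $r\le 0$ the pushforward $\U^r$ is principal or not supported on $\N$, and in any case is not in $Arc(\U)$): $\alpha^r$ generates $(f_r)_*(\U)=\U^r$, so by the previous Lemma $\U^r\in Arc(\U)$ iff some $\beta'\sim\alpha^r$ satisfies $\beta'\asymp\alpha$. If such a $\beta'$ existed, then $\alpha^r\sim\beta'\asymp\alpha$ with $r\in\s\R_{>0}$ finite and non-infinitesimal (being a standard positive real), and the preceding Proposition would force $r=st(r)=1$. Hence $r\neq 1$ forces $\U^r\notin Arc(\U)$.

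For a polynomial $P$, which we may regard as a map $\N\to\N$ (hence with positive leading coefficient), of degree $d\neq 1$: the case $d=0$ is immediate (then $P_*(\U)$ is principal), so assume $d\geq 2$. Then $P(\alpha)$ generates $P_*(\U)$, and I would argue by contradiction: if $P_*(\U)\in Arc(\U)$, the previous Lemma yields $\beta'\sim P(\alpha)$ with $\beta'\asymp\alpha$; by Lemma \ref{2 proprietà generali sim}.(3) there is $\gamma\sim\alpha$ with $P(\gamma)=\beta'$, and $\gamma$ is infinite by Lemma \ref{2 proprietà generali sim}.(4) (a finite hypernatural is $\sim$-equivalent only to itself). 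Since $P(\gamma)$ lies in the Archimedean class of its leading term $\gamma^d$ (Remark \ref{3 rmrk 2 variabili}.(1)), we get $\gamma^d\asymp\alpha$; and $\gamma\sim\alpha$ gives $\gamma^d\sim\alpha^d$ by Lemma \ref{2 proprietà generali sim}.(1), so $\alpha^d\sim\gamma^d\asymp\alpha$. The preceding Proposition (with $\xi=d\in\N$) then forces $d=1$, a contradiction.

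I do not expect a genuine obstacle: the two Propositions carry the weight. The only steps requiring attention are the harmless reinterpretations of $f_r$ and $P$ as honest maps whose behaviour at infinity is dictated by the leading monomial, and the bookkeeping step of replacing the generator $P(\alpha)$ by one of the form $P(\gamma)$ with $\gamma\sim\alpha$ (via Lemma \ref{2 proprietà generali sim}.(3)) before comparing Archimedean classes.
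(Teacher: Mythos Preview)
Your proposal is correct and follows precisely the route the paper intends: the Corollary is stated without proof because it is meant to be an immediate consequence of the preceding Propositions together with the Lemma characterizing $Arc(\U)$ via $\sim$-equivalent generators, and you have filled in exactly those details. The only minor remark is that in the polynomial case you could shortcut slightly by invoking Corollary~\ref{3 corollario x^n asymp y^m allora n=m} directly on $\gamma^{d}\asymp\alpha^{1}$ with $\gamma\sim\alpha$, avoiding the passage through $\alpha^{d}\sim\gamma^{d}$; but your argument via Lemma~\ref{2 proprietà generali sim}.(3) and the preceding Proposition is equally valid.
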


\begin{coroll}

    If $\V\in Arc(\U^r)$ and $r\not=1$, then $\V\notin Arc(\U)$
    
\end{coroll}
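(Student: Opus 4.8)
The plan is to derive the statement at once from the preceding Corollary, using that the sets $Arc(\U)$ and $Arc(\U^r)$ are either equal or disjoint. Assume, toward a contradiction, that some $\V\in\beta\N$ lies in both $Arc(\U)$ and $Arc(\U^r)$ with $r\neq 1$.

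By the very definition of $Arc$, there are integers $m,m'$ with $l_*(\V)=l_*(\U)+m$ and $l_*(\V)=l_*(\U^r)+m'$, where $l=l_2$, $l_*$ is the pushforward of $l$ to $\beta\N$, and $+k$ denotes the shift induced by $n\mapsto n+k$. This shift is a bijection on the set of non-principal ultrafilters, and $l_*(\U)$, $l_*(\U^r)$ are non-principal because $l$ is finite-to-one; hence subtracting a shift is legitimate. Cancelling, $l_*(\U^r)=l_*(\U)+(m-m')$ with $m-m'\in\Z$, which is precisely the assertion $\U^r\in Arc(\U)$. This contradicts the previous Corollary, which states that $\U^r=(f_r)_*(\U)\notin Arc(\U)$ whenever $r\neq 1$. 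Therefore $Arc(\U^r)\cap Arc(\U)=\emptyset$, i.e. $\V\in Arc(\U^r)$ forces $\V\notin Arc(\U)$; the case of $P$ a polynomial of degree $\neq 1$ is identical, invoking the polynomial part of the previous Corollary.

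I do not expect any real obstacle: the argument is simply the observation that $\V\mapsto$ (its shifted $l_*$-class) is well defined, so that $Arc$ partitions $\beta\N$ into classes, combined with the already established fact that $r$-th powers escape their own class. If one prefers to stay on the nonstandard side, the same content reads as follows: picking $\alpha\models\U$ one has $f_r(\alpha)\models\U^r$ with $f_r(\alpha)\asymp\alpha^r$, and an ultrafilter in $Arc(\U)\cap Arc(\U^r)$ would, through the Lemma relating $Arc$ of ultrafilters to $\asymp$ of their generators together with base independence, yield $\alpha\asymp\alpha^r$, which contradicts the Proposition that $\alpha^\xi\sim\beta\asymp\alpha$ forces $st(\xi)=1$. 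The only point in either formulation deserving a line of care is the legitimacy of cancelling an integer shift, and this is clear for non-principal ultrafilters.
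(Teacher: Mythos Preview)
Your argument is correct and is essentially the paper's own proof, just phrased on the ultrafilter side rather than the nonstandard side. The paper picks generators $\alpha\models\U$, $\beta\models\V$, finds $\beta',\beta''\sim\beta$ with $\beta'\asymp\alpha^r$ and $\beta''\asymp\alpha$, applies $l$ to obtain $l(\alpha)+n\sim rl(\alpha)+m'$, and declares this impossible; your version simply packages the same transitivity step as ``$\U^r\in Arc(\U)$'' and cites the previous Corollary, which is exactly the alternative you sketch in your final paragraph.
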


\begin{proof}
    In nonstandard terms, let suppose $\beta$ generates $\V$, $\alpha $ generates $\U$ and there exist $\beta', \beta''$ both equivalent to $\beta$ such that 
    \begin{itemize}
        \item $\beta' \asymp \alpha^r$, and
        \item $\beta'' \asymp \alpha$
    \end{itemize}
    So $l(\beta)\sim l(\beta')= l(\alpha^r) + m= rl(\alpha) + m'$ and $l(\beta) \sim l(\beta'')= l(\alpha) + n$: so we find $l(\alpha) + n \sim rl(\alpha) + m'$, which is not possible.
\end{proof}

So we find disjoint classes of ultrafilters looking at their powers: of course, again, taking polynomials or functions that increase polynomially gives rise to the same result. 

\begin{defn}

    Given $A\sub\N$, we define its logarithmic expansion as $$LE(A)=l^{-1}(l(A))=\bigcup_{i\in I} [2^i, 2^{i+1})$$ where $I=\{i \mid \exists a\in A, 2^i\leq a<2^{i+1}\}=l(A)$, and in general for $n\in \Z$ $$LE^{(n)}(A)=\bigcup_{i\in I, i-n\geq 0} [2^{i -n}, 2^{i -n +1})$$    
    
\end{defn}

So $LE(A)$ is obtained ``exploding" every point of $A$ to the whole 2-adic interval it is contained in. For example if $7\in A$, the whole $[4,7]=l^{-1}(l(7))$ will be contained in $LE(A)$; for $LE^{(m)}(A)$ it is enough to shift these intervals: for example if again $7\in A$, $[2,3]=l^{-1}(l(7) - 1)$ will be contained in $LE^{(1)}(A)$.

\begin{rmrk}

    $LE^{(m)}(A)=l^{-1}(l(A) - m)$.
    
\end{rmrk}

Indeed, it is almost by definition that $x\in LE^{(m)}(A) \Leftrightarrow l(x)\in l(A) - m$.

From here we can give an equivalent characterization of $Arc(\U)$. 

\begin{prop}

    $\V\in Arc(\U)$ if and only if $\V \supseteq LE^{(m)}(\U)=\{LE^{(m)}(A) \mid A\in \U\}$ for some $m\in \Z$.
    
\end{prop}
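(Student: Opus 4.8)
The plan is to transfer everything to nonstandard generators via Theorem \ref{4 Teo su somma di ultrafiltri come somma di ipernaturali}, use the previous lemma characterizing $Arc(\U)$ in terms of $l_*$, and then translate the condition $l_*(\V) = l_*(\U) + m$ into the containment $\V \supseteq LE^{(m)}(\U)$ by unpacking what the pushforward $l_*$ does to a basis of the ultrafilter. First I would fix generators $\alpha \models \U$ and $\beta \models \V$ in $\s\N$, and recall that by the lemma above, $\V \in Arc(\U)$ iff $l_*(\V) = l_*(\U) + m$ for some $m \in \Z$, i.e. iff $l(\beta) \sim l(\alpha) + m$ (using $\U_{f(\gamma)} = f_*(\U_\gamma)$).

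Next I would establish the key set-theoretic identity: for $A \sub \N$ and $x \in \N$, one has $x \in LE^{(m)}(A) \iff l(x) \in l(A) - m$, which is noted in the excerpt to hold "almost by definition" (here $l(A) - m = \{k - m \mid k \in l(A)\}$, intersected with $\N$). By transfer, for a hypernatural $\xi \in \s\N$ we get $\xi \in \s(LE^{(m)}(A)) \iff l(\xi) \in \s(l(A)) - m = \s(l(A) - m)$.

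Now for the forward direction: assume $\V \in Arc(\U)$, so $l(\beta) - m \sim l(\alpha)$ for some $m$. Given $A \in \U$, i.e. $\alpha \in \s A$, I want $LE^{(m)}(A) \in \V$, i.e. $\beta \in \s(LE^{(m)}(A))$, which by the identity above means $l(\beta) \in \s(l(A)) - m$, i.e. $l(\beta) - m \in \s(l(A))$. Since $\alpha \in \s A$ we have $l(\alpha) \in \s(l(A))$ (as $l(A) \supseteq l(\{a\})$... more simply, $\alpha \in \s A$ and $l(A)$ is a standard set containing $l$ of each element of $A$, so $\s A \sub \s(l^{-1}(l(A)))$ hence $l(\alpha) \in \s(l(A))$); and since $l(\beta) - m \sim l(\alpha)$ they lie in the same standard sets, so $l(\beta) - m \in \s(l(A))$. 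This gives $\beta \in \s(LE^{(m)}(A))$, hence $LE^{(m)}(A) \in \V$. Thus $LE^{(m)}(\U) \sub \V$.

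For the converse: assume $\V \supseteq LE^{(m)}(\U)$ for some $m$. I want to show $l(\beta) - m \sim l(\alpha)$, i.e. that $l(\beta) - m$ and $l(\alpha)$ belong to the same standard subsets of $\N$. Let $B \sub \N$ with $l(\alpha) \in \s B$. Observe $l^{-1}(B) \in \U$ (since $\alpha \in \s(l^{-1}(B))$, because $\alpha \in \s A$ for $A = l^{-1}(B)$ iff $l(\alpha) \in \s B$). Then $LE^{(m)}(l^{-1}(B)) \in \V$; but $l(LE^{(m)}(l^{-1}(B))) = l(l^{-1}(B)) - m = (B \cap \mathrm{range}(l)) - m$, and more to the point $LE^{(m)}(l^{-1}(B)) = l^{-1}(B - m)$ (since $LE^{(m)}(C) = l^{-1}(l(C) - m)$ and $l(l^{-1}(B)) = B \cap \mathrm{range}(l)$, whose preimage under $l$ is all of $l^{-1}(B)$ — so up to intersecting with $\mathrm{range}(l)$ this is $l^{-1}(B-m)$). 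Hence $\beta \in \s(l^{-1}(B-m))$, i.e. $l(\beta) \in \s(B - m)$, i.e. $l(\beta) - m \in \s B$. By symmetry (starting from $l(\beta) - m \in \s B$, pull back through $LE^{(-m)}$ and use $\V \supseteq LE^{(m)}(\U)$ the other way — or rather, run the same argument noting the containment gives the equivalence once we know it holds in one direction on a cofinal family) we conclude $l(\beta) - m \sim l(\alpha)$, so $\V \in Arc(\U)$.

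The main obstacle I anticipate is the bookkeeping around $l^{-1}$, $LE^{(m)}$, and the non-surjectivity of $l$ (its range is all of $\N$ if $l = l_2 = \lfloor \log_2 \rfloor$, actually, so this is harmless — but one should double-check the edge behavior near $i - m < 0$ in the definition of $LE^{(m)}$, which only truncates finitely many intervals and hence does not affect membership in a non-principal ultrafilter). The cleanest route is to prove once and for all the two identities $\xi \in \s(LE^{(m)}(A)) \iff l(\xi) - m \in \s(l(A))$ and $l^{-1}(B - m) =^* LE^{(m)}(l^{-1}(B))$ (equality up to a finite set), after which both directions are a two-line application of Definition \ref{1 def sim} together with the earlier lemma.
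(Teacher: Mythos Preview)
Your approach is correct in outline but takes a different route from the paper, and it has a small but recurring sign slip that you should fix. You write ``$l(\beta) \in \s(l(A)) - m$, i.e.\ $l(\beta) - m \in \s(l(A))$'', but $a \in S - m$ means $a + m \in S$, not $a - m \in S$. This sign confusion propagates through both directions of your argument. It is ultimately harmless because $m$ ranges over $\Z$, but as written your forward direction actually proves $\V \supseteq LE^{(-m)}(\U)$ rather than $LE^{(m)}(\U)$; you should either correct the signs consistently or make explicit that the sign of $m$ is immaterial.

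On the converse, your ``by symmetry'' is not really a symmetry argument: the hypothesis $\V \supseteq LE^{(m)}(\U)$ only goes one way. What you need (and what your parenthetical ``cofinal family'' remark gestures at) is that once you have shown $l(\alpha) \in \s B \Rightarrow l(\beta) + m \in \s B$ for all $B$, you get $\U_{l(\alpha)} \subseteq \U_{l(\beta)+m}$, and then maximality of ultrafilters gives equality. Just say that explicitly.

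Comparing with the paper: the paper works directly at the ultrafilter level, never choosing generators. Both directions use only the identity $LE^{(m)}(A) = l^{-1}(l(A) - m)$ and the definition of $l_*$: for the forward direction, $A \in \U$ gives $l(A) + m \in l_*(\U) + m = l_*(\V)$, hence $l^{-1}(l(A)+m) \in \V$; for the converse, $LE^{(m)}(A) \in \V$ gives $l(LE^{(m)}(A)) \subseteq l(A) + m \in l_*(\V)$, so $\{l(A) : A \in \U\} \subseteq l_*(\V) - m$ and maximality finishes. Your route through generators $\alpha, \beta$ and the $\sim$ relation is a faithful nonstandard translation of this, but it introduces an extra layer (transfer, the key transferred identity) that the paper avoids. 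The paper's argument is shorter; yours has the virtue of staying within the generator picture used elsewhere in the section, so either is acceptable once the signs are cleaned up.
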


\begin{proof}
    $\Rightarrow)$ By definition, $\V\in Arc(\U)$ means that $l_*(\V)= l_*(\U) + m$ for some $m\in \Z$. Let $A\in \U$. Then $l(A) + m \in l_*(\U) + m =l_*(\V)$ from which we get $l^{-1}(l(A )+ m)=LE^{(-m)}(A)\in \V$.

    $\Leftarrow)$ Suppose $\V\supseteq LE^{(m)}(\U)$: we want to prove that $l_*(\V)=l_*(\U) + m$. For every $A\in \U$, $LE^{(m)}(A)=l^{-1}(l(A) + m)\in \V$ and so $l(l^{-1}(l(A) + m)) \in l_*(\V)$; but $l(l^{-1}(l(A) + m))\sub l(A) +m$, hence $\forall A\in \U\ \ l(A)\in l_*(\V) - m$, and since $\{l(A) \mid A\in \U\}$ generates $l_*(\U)$ we have $l_*(\U)\sub l_*(\V) - m$ and we conclude by maximality.
\end{proof}

\begin{prop}
    $Arc(\U)$ is a left ideal of $(\beta\N, \oplus )$ that is neither open nor closed. 
\end{prop}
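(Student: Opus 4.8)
The plan is to handle the three assertions separately, reducing the two topological ones to Proposition~\ref{3 prop sulla chiusura delle orbite}. Throughout I assume $\U$ is non-principal (otherwise $Arc(\U)$ is just the set of principal ultrafilters, and the statement fails). The first thing I would record is the bookkeeping that makes the reduction work: since the fibres $l^{-1}(k)=[2^{k},2^{k+1})\cap\N$ of $l=l_2$ are finite, $l_*(\U)$ is again non-principal; and since $l_*(\U)+m=m\oplus l_*(\U)$ for $m\in\Z$, the definition of $Arc(\U)$ rereads as
\[Arc(\U)=l_*^{-1}\big(orb(l_*(\U))\big),\qquad orb(\mathcal L):=\{m\oplus\mathcal L\mid m\in\Z\},\]
where $l_*=\beta l$ is the pushforward, viewed as a map $\beta\N\to\beta\N_0$ with $\N_0:=\N\cup\{0\}=l(\N)$.

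Next I would prove the left-ideal property. Fix $\mathcal W\in\beta\N$ and $\V\in Arc(\U)$; the goal is $\mathcal W\oplus\V\in Arc(\U)$. Pick $\alpha\models\U$. By the Lemma relating generators in $Arc(\alpha)$ to membership in $Arc(\U_\alpha)$ there is $\beta\models\V$ with $\beta\asymp\alpha$, and $\beta$ is infinite since $\U$ is non-principal. If $\mathcal W$ is the principal ultrafilter at some $n\in\N$, then $\mathcal W\oplus\V$ is generated by $n+\beta\asymp\beta\asymp\alpha$ and we are done. Otherwise fix an infinite $\gamma\models\mathcal W$; by Proposition~\ref{3 Prop ci sono tanti tensori} the set $L_{\gamma,\beta}$ is nonempty, so there is $\gamma'\sim\gamma$ with $(\gamma',\beta)$ a tensor pair. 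Then $\gamma'+\beta$ generates $\mathcal W\oplus\V$ by Theorem~\ref{4 Teo su somma di ultrafiltri come somma di ipernaturali}, while $\beta\gg\gamma'$ by Corollary~\ref{cor:tensimpliesarch}, so $\gamma'+\beta\asymp\beta\asymp\alpha$ by Lemma~\ref{3 Lemma robette}.(1). Applying the Lemma in the other direction, $\mathcal W\oplus\V=\U_{\gamma'+\beta}\in Arc(\U)$.

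For the topological assertions I would first observe that $l_*:\beta\N\to\beta\N_0$ is continuous, surjective, and open — the openness because $l_*(\mathcal O_A)=\mathcal O_{l(A)}$ for every $A\subseteq\N$ and the sets $\mathcal O_A$ form a base of $\beta\N$. Surjectivity gives $l_*\big(Arc(\U)\big)=l_*\big(l_*^{-1}(orb(l_*(\U)))\big)=orb(l_*(\U))$. Hence: if $Arc(\U)$ were closed it would be compact, so its continuous image $orb(l_*(\U))$ would be closed; and if $Arc(\U)$ were open, then $orb(l_*(\U))$, being the image of an open set under the open map $l_*$, would be open. Both conclusions contradict Proposition~\ref{3 prop sulla chiusura delle orbite}, whose proof applies verbatim to the non-principal ultrafilter $l_*(\U)$ on $\N_0$.

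I do not expect a genuine obstacle here: all the conceptual content is carried by Proposition~\ref{3 prop sulla chiusura delle orbite}, and the present proof is essentially a translation. The points that need a little care are the ones noted above — that $l_*(\U)$ is non-principal (so that Proposition~\ref{3 prop sulla chiusura delle orbite} is applicable), that $l_*$ is an open surjection (so that openness and closedness transfer across $l_*$ in both directions), and, for the ideal part, the fact that adjoining a tensor factor to a sum does not disturb its largest Archimedean class, which is exactly Corollary~\ref{cor:tensimpliesarch} combined with Lemma~\ref{3 Lemma robette}.(1).
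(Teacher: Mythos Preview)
Your proof is correct and follows essentially the same route as the paper: the left-ideal part is handled via tensor pairs and Corollary~\ref{cor:tensimpliesarch}, and the topological part by writing $Arc(\U)=l_*^{-1}(orb(l_*(\U)))$ and pushing openness/closedness forward to contradict Proposition~\ref{3 prop sulla chiusura delle orbite}. If anything, you are slightly more careful than the paper: you separate the principal case for $\mathcal W$, you note explicitly that $l_*(\U)$ is non-principal (needed to invoke Proposition~\ref{3 prop sulla chiusura delle orbite}), and you justify the closedness direction via compactness rather than leaving it at ``similarly''.
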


\begin{proof}

    Let $\V\in Arc(\U)$ and $\mathcal{W}\in \beta\N$. We want to show that $\mathcal{W}\oplus\V\in Arc(\U)$.

    By hypothesis we know that there exist $\alpha\models \U,\ \beta\models \V$ in the same class. We want to find a generator of $\mathcal{W}\oplus \V$ in the same class of a generator of $\U$. 
    By Proposition \ref{3 Prop ci sono tanti tensori}, take $\gamma\models\mathcal{W} $ such that $(\gamma, \beta)$ is a tensor pair. Then $\gamma + \beta\models \mathcal{W} \oplus \V$ and, by Theorem \ref{4 Puritz}, $\gamma + \beta\asymp \beta$. Hence $\gamma+\beta\asymp\alpha$.

    %Let $(\alpha, \beta,\gamma)$ be generators of $\U, \V, \mathcal{W}$ respectively: then by a nonstandard perspective we need to show that there exists $\alpha'\sim\alpha$ in $Arc(\gamma +\s\beta)$. But $\gamma + \s\beta \asymp \s\beta \in Arc(\s\alpha)$ (because of Lemma \ref{3 Lemma robette}.5). So $\gamma + \s\beta \in cl (\s\alpha)$ and since $\s\alpha\sim \alpha$ we are done.\\
    
    To prove that it is not open nor close, notice that $Arc(\U) = l_*^{-1}(l_*(\U) + \Z)$ and that the $\Z$-orbit $l_*(\U) + \Z$ is not open and not closed (Proposition \ref{3 prop sulla chiusura delle orbite}). Suppose $Arc(\U)$ open. Then, since $l_*$ is open, $l_*(Arc(\U))=l_*(\U) + \Z$ would be open: so $Arc(\U)$ cannot be open, and similarly it cannot be closed.
\end{proof}

\begin{rmrk}
    Since $l_*$ is open we have that $\overline{Arc(\U)}= l^{-1}_*(\overline{\U + \Z})= l^{-1}_*(\beta\Z\oplus \U)$ if $\U$ is nonprincipal (if $\U$ is principal, the class is exactly $\N\sub \beta\N$).
\end{rmrk}

\section{Open questions}

This paper is meant to be a first step towards a through study of asymptotic PR. There are several questions to be solved to perform such a study. The first, natural, one is the following. 

\begin{question}\label{q1}
    Given a PR equation $E(x_1,\dots, x_n)$, can we fully characterize all its possible asymptotics? 
\end{question}

The main difficulty in Question \ref{q1} lies in the fact that, for nonlinear equations, we do not have a complete characterization of their PR. Even assuming the given equation to be PR, the arguments in the proof of Theorem \ref{HLI}, that we used in Section \ref{sec stand asymp} to characterize the linear case, does not adapt to the nonlinear case.

Question \ref{q1} has an interesting sub-question.

\begin{question} \label{q2}
    Does it exist a nonlinear Diophantine equation $P(x_1,\dots, x_n)=0$ which is asymptotically PR with three distinct asymptotic classes, but not with two? More in general, given $k\in\N, k\geq 3$, does it exist a nonlinear Diophantine equation that is asymptotically PR with $k$ asymptotic classes, but not with $k-1$? 
\end{question}

Notice that linear equations that are PR can have only two asymptotic classes, as shown in Theorem \ref{thm char linear asymp}; moreover, in all nonlinear examples we have given, we have a largest class and then ``the remaining terms'', which will be split in some asymptotic classes, but we do not really know how and how many. Question \ref{q2} seems related to a known problem in ultrafilters arithmetic. In fact, as re-shown in Section \ref{eq ultra}, we know how to compare first and last terms in a linear sum of ultrafilters. However, we do not have similar characterization for middle terms. For example, the following problem is open (a similar question can be found in \cite{HMS}, Section 4).

\begin{question}\label{q3}
    Is it true that if $a\U\oplus b \U  \oplus c\U= a\U \oplus b^{\prime}\U\oplus c\U$ then $b=b'?$
\end{question}

The above problem is related to asymptotics for reasons similar to those exploited in Section \ref{eq ultra}: if $(\alpha,\beta,\gamma)$ is a tensor triple\footnote{The 3-dimensional analogue of tensor pairs, see e.g. \cite{luperi2019tensor}.} with $\U=\U_{\alpha}=\U_{\beta}=\U_{\gamma}$, then $a\U\oplus b \U  \oplus c\U=\U_{a\alpha+b\beta+c\gamma}, a\U \oplus b^{\prime}\U\oplus c\U=\U_{a\alpha+b^{\prime}\beta+c\gamma}$. As $\alpha\ll\beta\ll\gamma$, Question \ref{q3} is actually a question about asymptotics in disguise.

Finally, by Proposition \ref{2 prop x^n + y^n = z^n-1 PR sse caso con i coefficenti} the remaining open problems regarding the PR of Fermat-Catalan equations would be solved by answering the following question.

\begin{question}\label{q4}
    Let $n\geq 3$. Is $x^n - y^n = z^{n-1}$ PR?
\end{question}

Due to the scarcity of known solutions to this kind of equations, we think that it is natural to conjecture the answer to Question \ref{q4} to be negative. Moreover, it is known that solutions to such equations can be parameterized, see e.g. (\cite{GRECHUK202369}). Therefore, Question \ref{q4} can be equivalently restated in terms of PR polynomial configurations; unfortunately, the configurations arising from these parametrization are not among those whose PR is known, and do not seem to be much simpler to study than Question \ref{q4} itself.

\end{document}